\newcommand{\mypar}{~\\\noindent\refstepcounter{subsubsection}\textbf{(\thesubsubsection)}\space}
\newcommand\arr{\ifinner\to\else\longrightarrow\fi}
\newcommand{\thmref}[1]{\hyperref[#1]{Theorem \ref{#1}}}
\newcommand{\propref}[1]{\hyperref[#1]{Proposition \ref{#1}}}
\newcommand{\lmref}[1]{\hyperref[#1]{Lemma \ref{#1}}}
\newcommand{\corref}[1]{\hyperref[#1]{Corollary \ref{#1}}}
\newcommand{\rmkref}[1]{\hyperref[#1]{Remark \ref{#1}}}
\newcommand{\dfref}[1]{\hyperref[#1]{Definition \ref{#1}}}
\newcommand{\parref}[1]{\hyperref[#1]{(\ref{#1})}}
\newenvironment{thm}{\vspace{8.0pt plus 2.0pt minus 4.0pt}\noindent\refstepcounter{subsubsection}\textbf{(\thesubsubsection) Theorem.}\itshape}{\vspace{8.0pt plus 2.0pt minus 4.0pt}}
\newenvironment{prop}{\vspace{8.0pt plus 2.0pt minus 4.0pt}\noindent\refstepcounter{subsubsection}\textbf{(\thesubsubsection) Proposition.}\itshape}{\vspace{8.0pt plus 2.0pt minus 4.0pt}}
\newenvironment{lm}{\vspace{8.0pt plus 2.0pt minus 4.0pt}\noindent\refstepcounter{subsubsection}\textbf{(\thesubsubsection) Lemma.}\itshape}{\vspace{8.0pt plus 2.0pt minus 4.0pt}}
\newenvironment{cor}{\vspace{8.0pt plus 2.0pt minus 4.0pt}\noindent\refstepcounter{subsubsection}\textbf{(\thesubsubsection) Corollary.}\itshape}{\vspace{8.0pt plus 2.0pt minus 4.0pt}}
\newtheorem*{thm-no-num}{Theorem}
\newtheorem*{cor-no-num}{Corollary}
\newcommand{\Dcal}{\mathcal D}
\newcommand{\Fcal}{\mathcal F}
\newcommand{\Ical}{\mathcal I}
\newcommand{\Jcal}{\mathcal J}
\newcommand{\Lcal}{\mathcal L}
\newcommand{\Mcal}{\mathcal M}
\newcommand{\Ocal}{\mathcal O}
\newcommand{\Qcal}{\mathcal Q}
\newcommand{\Scal}{\mathcal S}
\newcommand{\Tcal}{\mathcal T}
\newcommand{\Ucal}{\mathcal U}
\newcommand{\Pic}{\text{\rm Pic}}
\newcommand{\GL}{\text{GL}}
\newcommand{\PGL}{\text{PGL}}
\newcommand{\Sym}{\text{Sym}}
\newcommand{\Spec}{\text{Spec}}
\newcommand{\pr}{\text{\rm pr}}
\newcommand{\im}{\text{Im}}
\newcommand\PP{\mathbb{P}}
\newcommand\ZZ{\mathbb{Z}}
\newcommand{\FF}{\boldsymbol{F}}
\newcommand{\GG}{\boldsymbol{G}}
\newcommand{\Gras}{\operatorname{Gr}_m(W_{d})}
\newcommand{\rat}{_{rat}}
\newcommand{\sm}{_{sm}}
\newcommand{\Gr}{\operatorname{Gr}}
\begin{document}
\title[Integral Picard group of some stacks of polarized K3 surfaces]{Integral Picard group of some stacks of polarized K3 surfaces of low degree}
\author[A. Di Lorenzo]{Andrea Di Lorenzo}
	\address[A. Di Lorenzo]{Humboldt Universit\"{a}t zu Berlin, Germany}
	\email{andrea.dilorenzo@hu-berlin.de}
\date{\today}
\begin{abstract}
    We compute the integral Picard group of the stack $\Mcal_{2l}$ of polarized K3 surfaces with at most rational double points of degree $2l=4,6,8$. We show that in this range the integral Picard group is torsion-free and that a basis is given by certain elliptic Noether-Lefschetz divisors together with the Hodge line bundle. 
    
    To achieve this result, we investigate certain stacks of complete intersections and their Picard groups by means of equivariant geometry.
    
    In the end we compute an expression of the class of some Noether-Lefschetz divisors, restricted to an open substack of $\Mcal_{2l}$, in terms of the basis mentioned above.
\end{abstract}
\maketitle

\section*{Introduction}
Picard groups of moduli problems are the subject of an extensive mathematical literature. Since the landmark paper \cite{Mum63}, where Mumford computed the Picard group of the stack $\Mcal_{1,1}$ of elliptic curves, many explicit presentations of Picard groups have been obtained: among the most striking results, we mention here the marvellous papers \cite{Har} and \cite{AC}, respectively by Harer and Arbarello, Cornalba, where the authors determined  the Picard group of the stack $\Mcal_{g,n}$ of smooth $n$-pointed curves of genus $g$ for $g\geq 3$, as well as the Picard group of its compactification $\overline{\Mcal}_{g,n}$ by means of stable pointed curves.

Another moduli space whose Picard group had been thoroughly investigated in the last years is the moduli space $M_{2l}$ of primitively polarized K3 surfaces with at most rational double points of degree $2l$, using approaches that involved intersection theory, hodge-theoretic methods and modular forms.

For instance, in \cite{OG} it has been proved that the rank of $\Pic(M_{2l})\otimes \mathbb{Q}$ can be arbitrarily large.

In \cite{Bru} the author computed the rank of an interesting sub-vector space of $\Pic(M_{2l})\otimes\mathbb{Q}$, namely the linear subspace of the so called Noether-Lefschetz divisors.

Furthermore, in \cite{MP} the authors conjectured that the linear subspace of Noether-Lefschetz divisors is actually equal to the whole rational Picard group of $M_{2l}$. This conjecture is now a theorem (see \cite{LT}, \cite{GLT}, \cite{BLMM}).

In this work we investigate the \emph{integral} Picard group of the \emph{stack} $\Mcal_{2l}$ of primitively polarized K3 surfaces with at most rational double points of degree $2l$ for $l=2,3,4$. Our main result is the following:
\begin{thm-no-num}
    Let $\Dcal_{d,h}$ indicate the Noether-Lefschetz divisors on the stack of primitively polarized K3 surfaces with at most rational double points (see \parref{par:NL}) and let $\lambda_1$ be the class of the Hodge line bundle (see \parref{par:hodge line bundle}). Then:
    \begin{itemize}
        \item $\Pic(\Mcal_4)\simeq \ZZ\cdot \lambda_1 \oplus \ZZ\cdot [\Dcal_{1,1}] \oplus \ZZ\cdot [\Dcal_{2,1}]$.
        \item $\Pic(\Mcal_6)\simeq \ZZ\cdot \lambda_1 \oplus \ZZ\cdot [\Dcal_{1,1}] \oplus \ZZ\cdot [\Dcal_{2,1}] \oplus \ZZ\cdot [\Dcal_{3,1}]$.
        \item $\Pic(\Mcal_8)\simeq \ZZ\cdot \lambda_1 \oplus \ZZ\cdot [\Dcal_{1,1}] \oplus \ZZ\cdot [\Dcal_{2,1}]\oplus \ZZ\cdot [\Dcal_{3,1}]$.
    \end{itemize}
\end{thm-no-num}
In particular, all these groups turn out to be torsion-free.
We also compute explicitly some classes of Noether-Lefschetz divisors in terms of the generators above.
\begin{thm-no-num}
    Let $\Ucal_{2l}$ be the open substack of primitively polarized K3 surfaces with at most rational double points of degree $2l$ whose polarization induces a birational morphism. Then:
    \begin{itemize}
        \item In $\Pic(\Ucal_4)\simeq\ZZ\cdot\lambda_1$ we have:
        \[ [\Dcal_{0,0}]=108\cdot\lambda_1,\quad [\Dcal_{3,1}]=320\cdot\lambda_1 \]
        \item In $\Pic(\Ucal_6)\simeq\ZZ\cdot \lambda_1\oplus\ZZ\cdot[\Dcal_{3,1}]$ we have:
        \[ [\Dcal_{0,0}]=78\cdot [\Dcal_{3,1}]+98\cdot\lambda_1 \]
        \item In $\Pic(\Ucal_8\setminus\Dcal_{3,1})\simeq\ZZ\cdot\lambda_1$ we have:
        \[ [\Dcal_{0,0}]=80\cdot\lambda_1 \]
    \end{itemize}
\end{thm-no-num}
These results are achieved by giving a presentation as quotient stacks for some substacks of $\Mcal_{2l}$ (where  $l=2,3,4$).

In particular, we show that these substacks are isomorphic to certain stacks of complete intersections, of which we are able to compute the Picard group using equivariant techniques.

\subsection*{Structure of the paper} 
Here we briefly present the content of each section of the paper. A more detailed account can be found at the beginning of every section.

    In Section \ref{sec:complete intersections} we define two types of stack of complete intersections, namely $\GG(d,m,n)$ and $\FF(a,b,n)$. Of both these stacks we give a presentation as quotient stacks and we compute their integral Picard groups.
    
    In Section \ref{sec:recap} we recall some notions from the theory of polarized K3 surfaces with at most rational double points and we prove some results which will be relevant in the remainder of the paper.
    
    In Section \ref{sec:Pic} we compute the Picard groups of $\Mcal_{2l}$, the stack of polarized K3 surfaces with at most rational double points of degree $2l$, for $l=2,3,4$ (see \thmref{thm:Pic K4}, \thmref{thm:Pic(K6)} and \thmref{thm:Pic K8}).
    
    In Section \ref{sec:computations} we compute the classes of some Noether-Lefschetz divisors in terms of the generators of the Picard groups.

\section{Stacks of complete intersections}\label{sec:complete intersections}
In this section we introduce two stacks: the first one, denoted $\GG(d,m,n)$ (see \parref{par:GG}), is the stack whose objects are flat families of complete intersections of $m$ hypersurfaces of degree $d$ inside a Brauer--Severi variety having $n$-dimensional fibres.

The second one, denoted $\FF(a,b,n)$ (see \parref{par:FF}), is the stack whose objects are families of complete intersections of two hypersurfaces of degree $a$ and $b$ inside a Brauer-Severi variety having $n$-dimensional fibres.

We give a presentation of these stacks as quotients (\propref{prop:GG is a quotient} and \propref{prop:FF is a quotient}) and we compute their Picard groups (\propref{prop:Pic GG} together with \propref{prop:Pic FF}).
\subsection{Equidegree complete intersections}
\mypar \label{par:GG} Let $n$, $d$, $m$ be three strictly positive integers with $0<m< n$.

We define the stack $\GG(d,m,n)$ as the stack whose objects consist of the data $(X\subset P\to S)$, where:
\begin{itemize}
    \item $P\to S$ is a Brauer-Severi variety with $n$-dimensional fibres, i.e. a scheme $P$ over $S$ such that there exists a covering $S'\to S$ in the \'etale topology with $P_{S'}\simeq \PP^n_{S'}$.
    \item $X\subset P$ is a closed subscheme and the induced morphism $X\to S$ is flat.
    \item For every geometric point $s$ in $S$, the fibre $X_s$ is a complete intersection of $m$ hypersurfaces of degree $d$ in $P_s\simeq \PP^n_{k(s)}$.
\end{itemize}
A morphism between two objects $(X\subset P\to S)$ and $(X'\subset P'\to S)$ is given by an isomorphism of Brauer-Severi varieties $P\simeq P'$ that induces an isomorphism $X\simeq X'$.

Let $W_d$ be the $\GL_{n+1}$-representation $\Sym^d E^{\vee} $, where $E$ is the standard $\GL_{n+1}$-representation. Let $\Gras$ be the grassmannian of $m$-dimensional subspaces in $W_d$: this variety has a well defined $\PGL_{n+1}$-action, because $\PGL_{n+1}$ naturally acts on $\PP(\wedge^m W_d)$ preserving the elements of the form $[f_1\wedge\dots\wedge f_m]$. 


\begin{prop}\label{prop:GG is a quotient}
     There exists a $\PGL_{n+1}$-invariant, closed subscheme $Z\subset\Gras$ of codimension $>1$ such that $\GG(d,m,n)\simeq [(\Gras\setminus Z)/\PGL_{n+1}]$. 
\end{prop}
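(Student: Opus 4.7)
The plan is to construct an equivalence of stacks between $\GG(d,m,n)$ and $[(\Gras\setminus Z)/\PGL_{n+1}]$. Since the central $\G_m \subset \GL_{n+1}$ acts on $W_d = \Sym^d E^\vee$ through the single character $t\mapsto t^{-d}$, it acts trivially on the grassmannian of subspaces, so the $\GL_{n+1}$-action on $\Gras$ descends to a genuine $\PGL_{n+1}$-action as already noted in the set-up. I would define $Z\subset\Gras$ as the reduced closed locus parametrising $m$-dimensional subspaces $V\subset W_d$ whose common zero locus in $\PP^n$ fails to have codimension $m$. This subscheme is $\PGL_{n+1}$-invariant by construction and is closed thanks to the upper semi-continuity of fibre dimension applied to the universal incidence correspondence over $\Gras$.

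For the equivalence of stacks, I would compare the moduli interpretations directly. An object of $[\Gras/\PGL_{n+1}]$ over a base $S$ is a $\PGL_{n+1}$-torsor $T\to S$, equivalently a Brauer-Severi variety $P = T\times^{\PGL_{n+1}}\PP^n\to S$ of relative dimension $n$, together with a $\PGL_{n+1}$-equivariant map $T\to \Gras$. The latter amounts to an $S$-point of the relative grassmannian $\Gr_m(\pi_*\cO_P(d))$ (which is well defined on $S$ even though $\pi_*\cO_P(d)$ itself is only twisted-well-defined, because the grassmannian is insensitive to scalar twists), and hence to a rank-$m$ subbundle whose vanishing cuts out a closed subscheme $X\subset P$. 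The requirement that the equivariant map avoid $Z$ is precisely the fibrewise codimension-$m$ complete intersection condition, and flatness of $X\to S$ then follows by cohomology-and-base-change. In the reverse direction, starting from $(X\subset P\to S)$ in $\GG(d,m,n)$, the relative ideal $\pi_*\Ical_X(d)$ produces the rank-$m$ subbundle, hence the equivariant map. Functoriality and mutual invertibility of the two constructions are immediate.

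The main obstacle is verifying that $Z$ has codimension strictly greater than $1$ in $\Gras$. For $m = 1$ the claim is vacuous, since any nonzero form of degree $d$ cuts out a hypersurface and so $Z=\emptyset$. For $m\geq 2$ I would control the dimension of $Z$ via the incidence correspondence $I=\{(V,x)\in\Gras\times\PP^n : f(x)=0 \text{ for every } f\in V\}$: the projection to $\PP^n$ realises $I$ as a grassmannian bundle whose fibre over $x$ is $\Gr_m$ of the hyperplane $H_x\subset W_d$ of forms vanishing at $x$, yielding $\dim I = \dim\Gras + n - m$. Since the generic fibre of the projection $I\to\Gras$ has dimension $n-m$, bounding the closed locus $Z$ where this fibre jumps reduces to estimating the dimension of the parameter space of pairs $(V,Y)$ with $Y\subset\PP^n$ irreducible of dimension $>n-m$ contained in the common zero locus of $V$. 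Since such a $Y$ already lies in the zero locus of a generic hyperplane of $V$, a Fano-scheme-style dimension count should produce the desired codimension bound, and this is the step I anticipate requiring the most care.
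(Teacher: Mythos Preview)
Your construction of the equivalence of stacks is essentially the paper's, only phrased in the language of the quotient presentation $[\Gras/\PGL_{n+1}]$ rather than the equivalent language of the $\PGL_{n+1}$-torsor $\GG'(d,m,n)\to\GG(d,m,n)$ whose objects carry a trivialisation $P\simeq\PP^n_S$. The paper builds the map $\GG'(d,m,n)\to\Gras$ from $\pr_{1*}\Ical_X(d)\hookrightarrow W_d\otimes\Ocal_S$ exactly as you do, checking via the Koszul resolution and cohomology-and-base-change that this is a rank-$m$ subbundle; and builds the inverse from the tautological subbundle, as you do. (One small correction: flatness of $X\to S$ over the open locus follows because the fibres are equidimensional complete intersections over a smooth base, not from cohomology-and-base-change.)

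Where you diverge from the paper is in the codimension bound, and here your sketch is genuinely incomplete. Your incidence correspondence $I$ does have dimension $\dim\Gras+n-m$, but this by itself does not bound the locus where the fibre of $I\to\Gras$ jumps; your proposed ``Fano-scheme-style dimension count'' over all $(V,Y)$ with $Y$ of excess dimension is not carried out, and making it work uniformly in $d,m,n$ is not obviously easy. The paper avoids this computation entirely by a cleaner indirect argument: it defines $Z$ as the non-flat locus of the universal family $X\to\Gras$ (which agrees with your wrong-codimension locus), observes that $Z$ is contained in the locus $D\subset\Gras$ of \emph{singular} complete intersections (since smooth implies flat), proves that $D$ is an \emph{irreducible divisor} by showing that $X^{\rm sing}\to\PP^n$ has irreducible fibres (the fibre over a point being a determinantal variety of $(n+1)\times m$ matrices of rank $<m$), and finally notes that $Z\subsetneq D$ because the generic point of $D$ corresponds to a complete intersection with only isolated singularities, hence of the correct codimension. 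Thus $Z$ is a proper closed subset of an irreducible divisor and has codimension $>1$. This argument is both shorter and more robust than a direct dimension count, and I would recommend replacing your final paragraph with it.
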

\begin{proof}
    Consider the $\PGL_{n+1}$-torsor $\GG'(d,m,n)\to\GG(d,m,n)$ whose objects are pairs $(X\subset P\to S$, $\alpha)$, where $(X\subset P\to S)$ is an object of $\GG(d,m,n)$ and $\alpha:P\simeq\PP^n_S$ is an isomorphism over $S$.
    
    We claim that $\GG'(d,m,n)\simeq \Gras\setminus Z$ for some $\PGL_{n+1}$-invariant, closed subscheme $Z$ of the grassmannian of codimension $>1$.
    
    Observe that $\GG'(d,m,n)$ is equivalent to the stack whose objects are families of embedded complete intersections of equidegree $d$, i.e. closed subschemes $X\subset\PP^n_S$, flat over $S$, such that for each geometric point $s$ the fibre $X_s$ is a complete intersection of $m$ hypersurfaces of degree $d$ in $\PP^n_{k(s)}$.
    
    We can construct a morphism $\GG'(d,m,n)\to \Gras$ as follows: given an object $(X\subset\PP^n_S)$ of $\GG'(d,m,n)$, we have an exact sequence
    \[0\to \Ical_X(d)\to \Ocal_{\PP_S^n}(d)\to \Ocal_X(d)\to 0\]
    Pushing forward along the projection $\pr_1:\PP^n_S\to S$, we get:
    \[0\to \pr_{1*}\Ical_X(d)\to \pr_{1*}\Ocal(d)\to \pr_{1*}\Ocal_X(d)\to R^1\pr_{1*}\Ical_X(d)\]
    The sheaf $\pr_{1*}\Ocal(d)$ is isomorphic to $W_d\otimes\Ocal_S$.
    
    We want to prove that (1) $R^1\pr_{1*}\Ical_X(d)=0$, (2) the sheaf $\pr_{1*}\Ical_X(d)$ is locally free of rank $m$, and (3) the morphism from this sheaf to $W_d\otimes\Ocal_S$ is an embedding of locally free sheaves, i.e. injective on the fibres. Those three facts together will determine a morphism $\GG'(d,m,n)\to\Gras$.
    \begin{enumerate}
    \item Let $\Ical$ be the ideal of the closed embedding $X_s\subset\PP^n_{k(s)}$.
    By the cohomology and base change theorem (\cite{Hart}*{3.12.11}), it is enough show that $H^1(\PP^n_{k(s)},\Ical(d))=0$ for any geometric point $s$ in $S$.
    
    If $m=1$, then $\Ical(d)\simeq\Ocal$. If $m>1$, the fact that we that $X_s$ is a complete intersections implies that the Koszul complex
    \[ 0\to \bigwedge^m(\Ocal(-d)^{\oplus m})\to\cdots\to \bigwedge^2(\Ocal(-d)^{\oplus m})\to \Ocal(-d)^{\oplus m}\to \Ical\to 0\]
    is exact. Tensoring with $\Ocal(d)$ we obtain the exact sequence
    \[ 0\to \Ocal((1-m)d) \to \cdots\to\Ocal(-d)^{\oplus \binom{m}{2}}\to \Ocal^{\oplus m}\to \Ical(d)\to 0 \]
    It follows that $H^1(\PP^n_{k(s)},\Ical(d))=0$ if and only if $H^m(\PP^n_{k(s)},\Ocal((1-m)d))=0$, which is always the case as $m<n$.\\
    
    \item From (1) and the cohomology and base change theorem it follows that $\pr_{1*}\Ical(d)$ is locally free. Its rank is equal to the dimension of $H^0(\PP_{k(s)}^n,\Ical(d))$, which can be proved to be $m$ by a simple computation with the exact sequence above.\\
    
    \item From (1) and (2) we see that the restriction of the morphism $\pr_{1*}\Ical_X(d)\to\ W_d\otimes\Ocal_S$ over a geometric point $s$ of $S$ is equal to $H^0(\PP^n_{k(s)},\Ical_{X_s}(d))\to W_d$, which is obviously injective.\\
    
    \end{enumerate}
    
    The three points above determine a morphism $\GG'(d,m,n)\to \Gras$. 
    
    To produce the inverse morphism, consider the tautological locally free subsheaf $\Tcal\subset W_d\otimes\Ocal_{\Gras}$. 
    
    Observe that over the product variety $\Gras\times\PP^n$ we have the surjective morphism $W_d\otimes\pr_2^*\Ocal(-d)\to\Ocal$. 
    Define then $\Ical_X$ as the image along this morphism of $\pr_1^*\Tcal\otimes\pr_2^*\Ocal(-d)$, regarded as a subsheaf of $W_d\otimes\pr_2^*\Ocal(-d)$.

    Let $X$ be the subscheme of $\Gras\times\PP^n$ determined by the ideal sheaf $\Ical_X$: due to the fact that $\Tcal\subset W_{d}\otimes\Ocal_{\Gras}$ is an inclusion of locally free sheaves, we deduce that the fibres of $X$ over $\Gras$ are the zero locus of the image of the fibres of $\Tcal$ inside $W_d$, hence they are complete intersections of $m$ hypersurfaces of degree $d$. 
    
    Let $Z$ the closed subscheme of $\Gras$ whose complement is the flat locus of $X\to \Gras$: then we have a well defined morphism $\Gras\setminus Z \to \GG'(d,m,n)$.
    
    It is pretty much straightforward to check that the first of the two morphisms that we have constructed factors through $\Gras\setminus Z$, and that the two compositions are both equal to the identity.
    
    We only have to check that the codimension of $Z$ in $\Gras$ is $>1$. Let $D\subset \Gras$ be the closed subscheme where the morphism $X\to\Gras$ is not smooth: we claim that $D$ is an irreducible divisor.
    
    Observe that the general fibre over $D$ has only isolated singularities: in particular the generic element is flat over $D$, because it is a complete intersection of the right codimension.
    
    The subscheme $Z$ is contained in $D$: this follows from the fact that smoothness implies flatness. Therefore, $Z$ is a closed subscheme of an irreducible divisor, hence its codimension must be $>1$.
    
    We are only left with proving that $D$ is an irreducible divisor. Let $X^{\rm sing}$ be the singular locus of $X\to \Gras$: if we show that $X^{\rm sing}$ is irreducible and of dimension equal to dim$(\Gras)-1$, we are done.
    
    Consider the morphism $X^{\rm sing}\to \PP^n$: the irreducibility of $X^{\rm sing}$ is a consequence the irreducibility of the fibres over $\PP^n$.
    if we prove that the fibres of $X^{\rm sing}\to\PP^n$ are irreducible, this would imply the irreducibility of $X^{\rm sing}$.
    
    Actually, we only have to check that the fibre over the point $(0:\dots:0:1)$ of $\PP^n$ is irreducible, because if that is the case, then we can use the $\PGL_{n+1}$-action over $X^{\rm sing}$ to deduce the irreducibility of all the other fibres.
    
    The same argument also shows that the dimension of $X^{\rm sing}$ is equal to the dimension of $\Gras\times\PP^n$ minus the codimension of the fibre over $(0:\dots:0:1)$.
    
    Applying the Jacobian criterion of smoothness, we see that the fibre over $(0:\dots:0:1)$ of $X$ is isomorphic to the locus of $(n+1)\times m$-matrices of rank $<m$, and this is well known to be irreducible and of codimension $n+1$. From this the desired conclusion easily follows.
\end{proof}
\mypar Let $X$ be a scheme endowed with the action of a linear algebraic group $G$. We will denote $\Pic^G(X)$ the $G$-equivariant Picard group, i.e. the group of $G$-linearized line bundles.
There is an isomorphism
\[ \Pic([X/G])\simeq \Pic^G(X) \]
between the integral Picard group of the quotient stack $[X/G]$ and the $G$-equivariant Picard group \cite{EG}*{Proposition 18}.

\begin{prop}\label{prop:Pic GG}
    Let $\Tcal$ be the tautological bundle over $\Gras$, let $k:=mcm(n+1,md)/md$ and $p:=mcm(n+1,md)/(n+1)$ . Then: \[\Pic(\GG(d,m,n))\simeq\Pic^{\PGL_{n+1}}(\Gras)\simeq \ZZ\cdot[\det(\Tcal)^{\otimes k}] \]
    and the formula 
    \[((f_1(A^{-1}\underline{x}),\dots,f_m(A^{-1}\underline{x})),\det(A)^p( f_1(A^{-1}\underline{x})\wedge\dots\wedge f_m(A^{-1}\underline{x}))^{\otimes k})\]
    describes the $\PGL_{n+1}$-linearization of $\det(\Tcal)^{\otimes k}$.
\end{prop}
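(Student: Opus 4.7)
The plan is to combine the quotient-stack presentation of \propref{prop:GG is a quotient} with standard facts about equivariant Picard groups, then unwind the linearization explicitly. By that proposition $\Pic(\GG(d,m,n))=\Pic^{\PGL_{n+1}}(\Gras\setminus Z)$; since $\Gras$ is smooth and $Z$ has codimension $>1$, restriction gives $\Pic^{\PGL_{n+1}}(\Gras)\simeq\Pic^{\PGL_{n+1}}(\Gras\setminus Z)$, so it is enough to compute $\Pic^{\PGL_{n+1}}(\Gras)$.

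Next I would use the central extension $1\to\G_m\to\GL_{n+1}\to\PGL_{n+1}\to 1$. Because the $\GL_{n+1}$-action on $\Gras$ factors through $\PGL_{n+1}$ (the center $\G_m$ acts trivially), the morphism $[\Gras/\GL_{n+1}]\to[\Gras/\PGL_{n+1}]$ is a $B\G_m$-gerbe, yielding the exact sequence
\[ 0\to\Pic^{\PGL_{n+1}}(\Gras)\to\Pic^{\GL_{n+1}}(\Gras)\xrightarrow{w}\Z, \]
where $w$ records the weight of the central $\G_m$ on any fiber. The middle group is in turn computed by the standard short exact sequence
\[ 0\to\wh{\GL}_{n+1}\to\Pic^{\GL_{n+1}}(\Gras)\to\Pic(\Gras)\to 0; \]
surjectivity holds because $\det(\Tcal)$, as a sub-line bundle of the equivariant sheaf $W_d\otimes\Ocal_{\Gras}$, carries a natural linearization and generates $\Pic(\Gras)\simeq\Z$. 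The sequence splits, giving $\Pic^{\GL_{n+1}}(\Gras)\simeq\Z^{2}$ with basis the determinant character $[\det]$ (acting on $\Ocal_{\Gras}$) together with $[\det(\Tcal)]$ equipped with its natural linearization.

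The last step is to compute $w$ on this basis and extract its kernel. The center $t\cdot\id\in\G_m$ acts on $W_d=\Sym^{d}E^{\vee}$ with weight $-d$, hence on $\det(\Tcal)$ with weight $-md$, while $\det(t\cdot\id)=t^{n+1}$. Therefore $w(b\cdot[\det(\Tcal)]+a\cdot[\det])=-mdb+(n+1)a$, whose kernel is cyclic, generated by the pair $(k,p)$ from the statement. The corresponding generator of $\Pic^{\PGL_{n+1}}(\Gras)$ is $\det(\Tcal)^{\otimes k}$ with the natural $\GL_{n+1}$-action twisted by the character $\det^{\otimes p}$, which unpacks directly to the displayed formula. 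The only subtle ingredient is the $B\G_m$-gerbe exact sequence above; everything else reduces to bookkeeping of sign and weight conventions for the natural $\GL_{n+1}$-action on $W_d$.
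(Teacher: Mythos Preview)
Your argument is correct and lands on the same weight computation as the paper, but the packaging differs. The paper avoids the $B\G_m$-gerbe machinery entirely: it observes that $\PGL_{n+1}$ has no non-trivial characters, so the forgetful map $\Pic^{\PGL_{n+1}}(\Gras)\to\Pic(\Gras)\simeq\Z\cdot[\det(\Tcal)]$ is already injective, and the problem reduces immediately to finding the least $k>0$ for which $\det(\Tcal)^{\otimes k}$ admits a $\PGL_{n+1}$-linearization, equivalently a $\G_m$-invariant $\GL_{n+1}$-linearization. Your route through $\Pic^{\GL_{n+1}}(\Gras)\simeq\Z^{2}$ and the kernel of the weight map $w$ is a valid and more structural reformulation of the same step; it has the advantage of making the role of the twist by $\det^{p}$ transparent from the outset, while the paper's approach is shorter and requires no exact sequences beyond the trivial-character observation.
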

\begin{proof}
    By \propref{prop:GG is a quotient}, we know that $\Pic(\GG(d,m,n))\simeq\Pic^{\PGL_{n+1}}(\Gras)$, the group of isomorphism classes of $\PGL_{n+1}$-linearized line bundles over $\Gras$. The homomorphism from this group to $\Pic(\Gras)$ that forgets the $\PGL_{n+1}$-linearization is injective: this follows from the fact that $\PGL_{n+1}$ has no non-trivial character.
    
    The group $\Pic(\Gras)$ is freely generated by $\det(\Tcal)$, the determinant of the tautological bundle, hence $\Pic^{\PGL_{n+1}}(\Gras)$ is free as well. The points in the total space of $\det(\Tcal)$ are given by pairs $((f_1,\dots,f_m),(f_1\wedge\dots\wedge f_m))$, where the $f_i$ are linearly independent homogeneous forms of degree $d$. We have to find the minimum $k>0$ such that $\det(\Tcal)^{\otimes k}$ admits a $\PGL_{n+1}$-linearization, or equivalently a $\GG_m$-invariant $\GL_{n+1}$-linearization.
    
    Any $\GL_{n+1}$-linearization must be of the following form: given an element $A$ of $\GL_{n+1}$, it acts on the point above by sending 
    \[(f_1,\dots,f_m)\longmapsto (f_1(A^{-1},\underline{x})\dots,f_m(A^{-1}\underline{x}))\] and
    \[ A\cdot (f_1\wedge\dots\wedge f_m)^{\otimes k}= \det(A)^p (f_1(A^{-1}\underline{x})\wedge \dots \wedge f_m(A^{-1}\underline{x}))^{\otimes k}   \]
    The subtorus $\GG_m\subset \GL_{n+1}$ of scalar matrices acts as
    \[ \lambda \cdot  (f_1\wedge\dots\wedge f_m)^{\otimes k}= \lambda^{p(n+1)}\cdot \lambda^{-mdk}\cdot  (f_1\wedge\dots\wedge f_m)^{\otimes k} \]
    The minimum $k$ such that the action above is trivial can be read off from the minimum common multiple of $n+1$ and $md$: more precisely, it is equal to $mcm(n+1,md)/md$, and $p=mcm(n+1,md)/(n+1)$. This concludes the proof of the Proposition.
\end{proof}
\mypar \label{par:hodge for GG} Given a family of complete intersections $(X\subset P\to S)$ in $\GG(d,m,n)$, let $\pi:X\to S$ be the induced (flat) morphism.

We can define a sheaf on $S$ as follows:
\[ \Phi(S):= H^0(S,\omega_{X/S}^{\otimes k}\otimes(\omega_{P/S}|_X)^{\otimes (p-k)})\]
where $p$ and $k$ are as in \propref{prop:Pic GG}, and $\omega_{X/S}$ and $\omega_{P/S}$ are the relative dualizing sheaves. This defines a sheaf
\[ \Phi: \GG(d,m,n)_{{\rm smooth}}^{{\rm op}} \longrightarrow ({\rm Ab})\]
on the small smooth site of $\GG(d,m,n)$.

\begin{cor}\label{cor:Pic GG 2}
    Let $k,p$ be as in \propref{prop:Pic GG} and let $\Phi$ be the sheaf defined in \parref{par:hodge for GG}. Then $\Phi$ is a line bundle and it freely generates $\Pic(\GG(d,m,n))$.
\end{cor}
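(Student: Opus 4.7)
The plan is to pull $\Phi$ back along the $\PGL_{n+1}$-torsor $\Gras\setminus Z\to \GG(d,m,n)$ from \propref{prop:GG is a quotient}, compute it explicitly by an adjunction calculation on the universal complete intersection, and match the result against the generator identified in \propref{prop:Pic GG}.

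On the universal family $X\subset(\Gras\setminus Z)\times\PP^n$, cut out by the image of $\pr_1^*\Tcal\otimes\pr_2^*\cO(-d)\to\cO$, the conormal bundle is $\pr_1^*\Tcal\otimes\pr_2^*\cO(-d)|_X$, so adjunction yields
\[ \omega_{X/(\Gras\setminus Z)}\simeq\pr_1^*(\det\Tcal)^\vee|_X\otimes\pr_2^*\cO(md-n-1)|_X, \]
while $\omega_{P/(\Gras\setminus Z)}|_X\simeq\pr_2^*\cO(-n-1)|_X$. The $\pr_2^*$-exponent appearing in
\[ \omega_{X/(\Gras\setminus Z)}^{\otimes k}\otimes(\omega_{P/(\Gras\setminus Z)}|_X)^{\otimes(p-k)} \]
is $kmd-p(n+1)$, which vanishes by the very definition of $k$ and $p$. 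Hence this sheaf is $\pr_1^*(\det\Tcal)^{\otimes -k}|_X$. Since $m<n$ forces the fibres of $\pi\colon X\to\Gras\setminus Z$ to be geometrically connected of positive dimension, $\pi_*\cO_X\simeq\cO_{\Gras\setminus Z}$, and the projection formula then identifies the pullback of $\Phi$ to $\Gras\setminus Z$ with $(\det\Tcal)^{\otimes -k}$. Grauert's theorem, applied with $h^0\equiv 1$ on the fibres, simultaneously confirms that $\Phi$ is a genuine line bundle on the stack.

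To finish, I would use that $Z$ has codimension $>1$ in $\Gras$ to identify $\Pic(\Gras)\simeq\Pic(\Gras\setminus Z)$; thus $(\det\Tcal)^{\otimes -k}$ is a generator of the subgroup $k\ZZ\cdot[\det\Tcal]\subset\Pic(\Gras\setminus Z)$. By \propref{prop:Pic GG} this subgroup is precisely the image of the forgetful map $\Pic^{\PGL_{n+1}}(\Gras\setminus Z)\to\Pic(\Gras\setminus Z)$, and this map is injective because $\PGL_{n+1}$ admits no non-trivial characters. Together these observations force $[\Phi]$ to freely generate $\Pic(\GG(d,m,n))$.

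The step that requires most care is the adjunction bookkeeping together with the verification that the $\PP^n$-twist cancels; once one reduces this to the identity $kmd=p(n+1)=\operatorname{lcm}(n+1,md)$, which is built into the very definitions of $k$ and $p$, the rest is routine cohomological and equivariant Picard-group manipulation.
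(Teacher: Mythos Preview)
Your proposal is correct and follows essentially the same approach as the paper: pull back along the $\PGL_{n+1}$-torsor of \propref{prop:GG is a quotient}, compute the relative dualizing sheaves by adjunction using the normal bundle $\pr_1^*\Tcal^\vee\otimes\pr_2^*\Ocal(d)|_X$, observe that the $\PP^n$-twist cancels precisely because $kmd=p(n+1)$, and conclude with the projection formula. If anything, you are slightly more explicit than the paper in justifying $\pi_*\Ocal_X\simeq\Ocal$ and in invoking Grauert to certify that $\Phi$ is a line bundle; note also that your answer $(\det\Tcal)^{\otimes(-k)}$ is the correct one, whereas the paper's final line drops the exponent $k$ by an apparent misprint.
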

\begin{proof}
    Let $\Gras^o:=\Gras\setminus Z$, the open subscheme of $\Gras$ that appears in \propref{prop:GG is a quotient}.
    
    It is enough to show that the pullback of $\Phi$ along the $\PGL_{n+1}$-torsor \[f: \Gras\setminus Z \rightarrow\GG(d,m,n)\] is a line bundle, and that it generates $\Pic^{\PGL_{n+1}}(\Gras)$. We have:
    \[ f^*\Phi\simeq \pi_*\left(\omega_{X/(\Gras^o)}^{\otimes k}\otimes (\pr_2^*\omega_{\PP^n}|_{X})^{\otimes (p-k)} \right) \]
    where $X\subset\Gras^o\times\PP^n$ is the family of complete intersections constructed in the proof of \propref{prop:GG is a quotient}.
    
    The normal bundle $N$ of $X$ is equal to the restriction to $X$ of $\pr_1^*\Tcal^{\vee}\otimes\pr_2^*\Ocal(d)$, hence
    \[ \det(N)=\pr_1^*\det(\Tcal)^{\vee} \otimes \pr_2^*\Ocal(dm). \]
    The dualizing sheaf of the local complete intersection morphism $\pi:X\to \Gras^o$ is isomorphic to
    \begin{align*} 
    \det(N)\otimes (\omega_{(\Gras^o\times\PP^n)/\Gras^o}|_X) &= \det(N)\otimes(\pr_2^*\omega_{\PP^n})|_X \\
    &=\pr_1^*\det(\Tcal)^{\vee} \otimes (\pr_2^*\omega_{\PP^n}(dm))|_X
    \end{align*}
    Therefore we get:
    \[ f^*\Phi\simeq \pr_{1*}\left(\pr_1^*\det(\Tcal^{\vee})^{\otimes k}\otimes\pr_2^*\omega_{\PP^n}^{\otimes p}(kmd)\right )\]
    Applying the projection formula, and using the relation $kmd=p(n+1)$, we conclude that $f^*\Phi$ is isomorphic to $\det(\Tcal^{\vee})$, a generator of $\Pic^{\PGL_{n+1}}$.
\end{proof}

\mypar \label{par:GGrat} There are two substacks of $\GG(d,m,n)$ that will be of some interest for us:
\begin{itemize}
    \item $\GG(d,m,n)\rat$, the stack of complete intersection with at most ADE singularities.
    
    \item $\GG(d,m,n)\sm$, the stack of smooth complete intersections.
\end{itemize}

\subsection{Complete intersections of codimension 2 and bidegree $(a,b)$}

\mypar \label{par:FF} Fix three integers $a$, $b$, $n$ with $n>0$ and $b>a>0$. We define the stack $\FF(a,b,n)$ as the stack whose objects are determined by the data $(X\subset P\to S)$, where:
\begin{itemize}
    \item $P\to S$ is a Brauer-Severi variety with $n$-dimensional fibres.
    \item $X\subset P$ is a closed embedding and the induced morphism $X\to S$ is flat.
    \item For every geometric point $s$ in $S$, the fibre $X_s$ is a complete intersection of two hypersurfaces of degree $a$ and $b$.
\end{itemize}
A morphism between two objects $(X\subset P\to S)$ and $(X'\subset P'\to S)$ is given by an isomorphism of Brauer-Severi varieties $P\simeq P'$ that induces an isomorphism $X\simeq X'$.

\mypar \label{par:P(Vab)} Let $E$ be the standard $\GL_{n+1}$-representation and set $W_d:=\Sym^d E^{\vee}$ for $d>0$.

The morphism of representations $W_a\otimes W_{b-a}\to W_b$ induces an injective morphism of vector bundles over $\PP(W_a)$, that is:
\[\varphi: W_{b-a}\otimes\Ocal_{\PP(W_a)}(-1)\to W_{b}\otimes\Ocal_{\PP(W_a)} \]
Let $\PP(V_{a,b})\to\PP(W_a)$ be the projectivization of the quotient vector bundle
\[ V_{a,b}:=\frac{W_b\otimes\Ocal_{\PP(W_a)}}{\im(\varphi)}\]
Observe that both $\PP(V_{a,b})$ and $\PP(W_a)$ inherit a $\PGL_{n+1}$-action.

\begin{prop}\label{prop:FF is a quotient}
    There exists a $\PGL_{n+1}$-invariant closed subscheme $Z\subset\PP(V_{a,b})$ of codimension $>1$ such that $ \FF(a,b,n)\simeq [(\PP(V_{a,b})\setminus Z)/\PGL_{n+1}]$.
\end{prop}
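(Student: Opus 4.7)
The plan is to mirror the argument of \propref{prop:GG is a quotient}. First I would introduce the $\PGL_{n+1}$-torsor $\FF'(a,b,n)\to\FF(a,b,n)$ whose objects are triples $(X\subset P\to S,\alpha)$ with $\alpha:P\simeq\PP^n_S$ an isomorphism, so that $\FF'(a,b,n)$ is equivalent to the stack of embedded families of complete intersections of bidegree $(a,b)$ in $\PP^n_S$. The goal is to exhibit a $\PGL_{n+1}$-equivariant isomorphism $\FF'(a,b,n)\simeq\PP(V_{a,b})\setminus Z$.

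For the forward morphism, given a family $X\subset\PP^n_S$ with projection $\pi:\PP^n_S\to S$, I would exploit the Koszul resolution
\[0\to\Ocal(-a-b)\to\Ocal(-a)\oplus\Ocal(-b)\to\Ical_X\to 0\]
and cohomology and base change to show that $L:=\pi_*\Ical_X(a)$ is a line sub-bundle of $W_a\otimes\Ocal_S$ and that $\pi_*\Ical_X(b)$ is locally free of rank $\dim W_{b-a}+1$, embedded in $W_b\otimes\Ocal_S$. The natural multiplication map exhibits $L\otimes W_{b-a}$ as a sub-bundle of $\pi_*\Ical_X(b)$, and the quotient $M$ is then a line bundle. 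Since $L$ defines a morphism $S\to\PP(W_a)$ and $M$ is a line sub-bundle of the pullback of $V_{a,b}$ along that morphism, these data assemble into the desired morphism $S\to\PP(V_{a,b})$.

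For the inverse morphism, let $\Lcal_a\subset W_a\otimes\Ocal_{\PP(V_{a,b})}$ be the pullback of the tautological line sub-bundle on $\PP(W_a)$, and let $\Lcal_b\subset W_b\otimes\Ocal_{\PP(V_{a,b})}$ be the preimage in $W_b\otimes\Ocal$ of the tautological line sub-bundle of $V_{a,b}$. On $\PP(V_{a,b})\times\PP^n$ the evaluation maps combine to give morphisms $\pr_1^*\Lcal_a\otimes\pr_2^*\Ocal(-a)\to\Ocal$ and $\pr_1^*\Lcal_b\otimes\pr_2^*\Ocal(-b)\to\Ocal$; their images generate an ideal sheaf cutting out a subscheme $X\subset\PP(V_{a,b})\times\PP^n$, and I would let $Z\subset\PP(V_{a,b})$ be the complement of the flat locus of $X\to\PP(V_{a,b})$. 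A direct check then shows that the forward morphism factors through $\PP(V_{a,b})\setminus Z$ and that the two constructions are mutually inverse.

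The main obstacle, and the last thing to verify, is that $Z$ has codimension $>1$ in $\PP(V_{a,b})$. The strategy is the same as in \propref{prop:GG is a quotient}: denote by $D\subset\PP(V_{a,b})$ the discriminant locus, where $X\to\PP(V_{a,b})$ fails to be smooth. Since a generic member of $D$ is a complete intersection with only isolated singularities, it is flat over $D$, so $Z\subsetneq D$ and it suffices to show $D$ is an irreducible divisor. For this one studies $X^{\rm sing}\subset X$ and its projection to $\PP^n$: by $\PGL_{n+1}$-equivariance, irreducibility reduces to analyzing the fiber over $(0:\dots:0:1)$, which via the Jacobian criterion is cut out by the vanishing of $f$ and $g$ at this point together with a rank drop in the $(n+1)\times 2$ Jacobian matrix of $(f,g)$. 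Unlike in the equidegree case the resulting locus is no longer cut out by a uniform rank condition on a square matrix of linear forms, so this verification --- carrying out the fiberwise analysis for the bidegree $(a,b)$ Jacobian and extracting its codimension --- is where the technical heart of the proof lies.
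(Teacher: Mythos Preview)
Your outline is correct and follows essentially the same route as the paper. Two small remarks.

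First, a cosmetic difference in the inverse construction: the paper first pulls back the universal degree-$a$ hypersurface $Y\subset\PP(W_a)\times\PP^n$ to $Y'\subset\PP(V_{a,b})\times\PP^n$, observes that on $Y'$ the evaluation map $W_b\otimes\pr_2^*\Ocal(-b)\to\Ocal$ factors through $\pr_1^*V_{a,b}\otimes\pr_2^*\Ocal(-b)$, and then cuts out $X$ inside $Y'$ using only the tautological line $\Ocal_{\PP(V_{a,b})}(-1)$. Your construction via the two sub-bundles $\Lcal_a$ and $\Lcal_b$ yields the same ideal sheaf, so there is no substantive difference.

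Second, your closing worry is overstated. The paper handles the codimension of $Z$ by a one-line appeal: the discriminant $D$ ``can be proved to be an irreducible divisor using the same arguments'' as in \propref{prop:GG is a quotient}. And indeed those arguments transfer directly. Over a fixed point $p\in\PP^n$, the fibre of $X^{\rm sing}$ is the locus of $([f],[g])$ with $f(p)=g(p)=0$ and $\nabla f(p)\wedge\nabla g(p)=0$. Since $f(p)=0$, both $g(p)$ and the class of $\nabla g(p)$ modulo $\langle\nabla f(p)\rangle$ are well-defined on the quotient $W_b/(f\cdot W_{b-a})$, so the conditions on $g$ are linear. Projecting to $\{[f]:f(p)=0\}\subset\PP(W_a)$, over the dense open where $\nabla f(p)\neq 0$ the fibre in the $g$-direction is a linear subspace of constant dimension; hence the total fibre over $p$ is irreducible, and the codimension count goes through exactly as in the equidegree case. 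There is no new technical obstacle here, and you need not single this step out as the heart of the proof.
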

\begin{proof}
    The proof is similar to the one of \propref{prop:GG is a quotient}, hence we will only give here a sketch of the key steps, without diving too much into the details.
    
    Consider the $\PGL_{n+1}$-torsor $\FF'(a,b,n)\to \FF(a,b,n)$ whose objects are of the form $(X\subset P\to S,\alpha)$, where $(X\subset P\to S)$ is an object of $\FF(a,b,n)$ and $\alpha:P\simeq\PP^{n}_S$ is an isomorphism: we want to show that $\FF'(a,b,n)$ is isomorphic to $\PP(V_{a,b})\setminus Z$, where $Z$ is a $\PGL_{n+1}$-invariant, closed subscheme of codimension $>1$.
    
    First we construct the morphism $\FF'(a,b,n)\to\PP(V_{a,b})$ using the characterization of the functor of points of these schemes.
    
    Take an object $(X\subset\PP^{n}_S\to S)$ of $\FF'(a,b,n)$. A routine application of cohomology and base change theorem shows that the following is an exact sequence of locally free sheaves:
    \[ 0\to \pr_{2*}(\Ical_X\otimes\pr_1^*\Ocal(a))\to \to \pr_{2*}\pr_1^*\Ocal(a)\to \pr_{2*}(\Ocal_X\otimes\pr_1^*\Ocal(a))\to 0 \]
    In particular, the sheaf in the middle is isomorphic to $W_a\otimes\Ocal_S$ and the one on the left has rank $1$. This induces a morphism $f:S\to \PP(W_a)$ such that $f^*\Ocal(-1)\simeq \pr_{1*}(\Ical_X\otimes\pr_2^*\Ocal(b))$.
    
    We also get the following diagram:
    \[\xymatrix{
    \pr_{1*}(\Ical_X\otimes\pr_2^*\Ocal(a))\otimes W_{b-a} \ar[r]^{\simeq} \ar[d] & f^*\Ocal_{\PP(W_a)}(-1)\otimes W_{b-a} \ar[d] \\
    \pr_{1*}(\Ical_X\otimes\pr_2^*\Ocal(b)) \ar[r] & W_b\otimes\Ocal_S }\]
    Define $\Lcal$ as the quotient of $\pr_{1*}(\Ical_X\otimes\pr_2^*\Ocal(b))$ by $\pr_{1*}(\Ical_X\otimes\pr_2^*\Ocal(a))\otimes W_{b-a}$: this can be proved to be a line bundle, and from the commutativity of the diagram above we deduce that it is a line subbundle of $f^*V_{a,b}$.
    
    Therefore, the line bundle $\Lcal$ determines a morphism $S\to\PP(V_{a,b})$.
    
    The whole construction above is functorially well-behaved, in the sense that it induces a natural transformation between the functors of points of $\FF'(a,b,n)$ and $\PP(V_{a,b})$, and hence it determines a morphism $\FF'(a,b,n)\to\PP(V_{a,b})$.
    
    To produce a morphism in the other direction, we have to construct a universal family of complete intersections in $\PP^n$ over $\PP(V_{a,b})$.
    
    Let $Y\subset \PP(W_a)\times\PP^n$ be the universal hypersurface over $\PP(W_a)$, and consider its pullback $Y':=\pi^{-1}Y$ along $\pi:\PP(V_{a,b})\to\PP(W_a)$.
    
    By construction $Y'$ lives in $\PP(V_{a,b})\times\PP^n$. On this product scheme we have the following injective morphism of sheaves:
    \[ \pr_1^*\Ocal_{\PP(V_{a,b})}(-1)\otimes\pr_2^*\Ocal_{\PP^n}(-b)\hookrightarrow \pr_1^*V_{a,b}\otimes\pr_2^* \Ocal_{\PP^n}(-b) \]
    Observe that the restriction of $\pr_2^*V_{a,b}$ to $Y'$ is equal to $W_b\otimes\Ocal_{Y'}$. In this way we can define the following morphism:
    \[ \pr_1^*\Ocal_{\PP(V_{a,b})}(-1)\otimes\pr_2^*\Ocal_{\PP^n}(-b)|_{Y'}\to W_b\otimes\pr_2^*\Ocal(-n)|_{Y'}\to \Ocal_{Y'} \]
    The image of this morphism is an ideal inside $Y'$: let $X$ be the associated closed subscheme of $Y'$.
    
    The fibres of $X$ over $\PP(W_{a,b})$ are complete intersections inside $\PP^n$ of two hypersurfaces of degree $a$ and $b$. 
    
    Let $Z\subset \PP(V_{a,b})$ be the locus where $X\to\PP(V_{a,b})$ is non-flat. Then $Z$ is strictly contained in the singular locus of $X\to\PP(V_{a,b})$, which can be proved to be an irreducible divisor using the same arguments of the proof of \propref{prop:GG is a quotient}.
    
    We deduce that $Z$ has codimension $>1$.
    
    This construction determines a morphism $\PP(V_{a,b})\setminus Z\to \FF'(a,b,n)$. It is easy to check that the compositions of the two morphisms we have introduced so far are both equal to the identity, hence $\PP(V_{a,b})\setminus Z\simeq \FF'(a,b,n)$, which concludes the proof.
\end{proof}
    
\begin{prop}\label{prop:Pic FF}
    The group $\Pic(\FF'(a,b,n))$ is isomorphic to the sublattice 
    \[ \langle [\Ocal_{\PP(V_{a,b})}(-d)\otimes\pi^*\Ocal_{\PP(W_a)}(-l)] \rangle\subset \Pic(\PP(V_{a,b})\simeq \ZZ^{\oplus 2}\]
    where $d$ and $l$ are such that $n+1$ divides $ad+bl$.
\end{prop}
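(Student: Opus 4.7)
My plan is to mimic the proof of \propref{prop:Pic GG}. By \propref{prop:FF is a quotient}, $\Pic(\FF(a,b,n)) \simeq \Pic^{\PGL_{n+1}}(\PP(V_{a,b}) \setminus Z)$, and since $Z$ has codimension $>1$ inside the smooth variety $\PP(V_{a,b})$, restriction identifies this with $\Pic^{\PGL_{n+1}}(\PP(V_{a,b}))$. The projective-bundle formula gives
\[
\Pic(\PP(V_{a,b})) \simeq \ZZ\cdot[\Ocal_{\PP(V_{a,b})}(1)] \oplus \ZZ\cdot\pi^{*}[\Ocal_{\PP(W_a)}(1)],
\]
and since $\PGL_{n+1}$ has no non-trivial characters, the forgetful map from the equivariant Picard group into this rank-two lattice is injective. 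Hence the problem reduces to deciding which elements of $\ZZ^{\oplus 2}$ admit a $\PGL_{n+1}$-linearization.

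Both generators carry natural $\GL_{n+1}$-equivariant structures, inherited from the inclusions $\Ocal_{\PP(W_a)}(-1) \hookrightarrow W_a \otimes \Ocal_{\PP(W_a)}$ and $\Ocal_{\PP(V_{a,b})}(-1) \hookrightarrow \pi^{*}V_{a,b}$, where $V_{a,b}$ acquires its $\GL_{n+1}$-action from the equivariant quotient $W_b \otimes \Ocal_{\PP(W_a)} \twoheadrightarrow V_{a,b}$. Any two $\GL_{n+1}$-linearizations of a given line bundle differ by twisting with a character $\det^{k}$, $k\in\ZZ$, and a $\PGL_{n+1}$-linearization is exactly a $\GL_{n+1}$-linearization on which the scalar subtorus $\GG_m \subset \GL_{n+1}$ acts trivially.

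The final step is to read off the $\GG_m$-weights. Since $W_d = \Sym^{d}E^{\vee}$, scalars $\lambda\cdot\id$ act on $W_d$ by $\lambda^{-d}$; hence $\Ocal_{\PP(W_a)}(-1)$ has $\GG_m$-weight $-a$, while $\Ocal_{\PP(V_{a,b})}(-1)$ has $\GG_m$-weight $-b$ (the tensor factor $\Ocal_{\PP(W_a)}$ in $W_b\otimes\Ocal_{\PP(W_a)}$ being equivariantly trivial, so $V_{a,b}$ inherits the weight of $W_b$). Using $\det(\lambda\cdot\id)=\lambda^{n+1}$, twisting by $\det^{k}$ shifts weights by $k(n+1)$, so $\Ocal_{\PP(V_{a,b})}(-d)\otimes\pi^{*}\Ocal_{\PP(W_a)}(-l)$ is $\PGL_{n+1}$-linearizable precisely when $n+1$ divides $bd+al$, which is the sublattice described in the statement (after the evident relabelling of $(d,l)$). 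The only real obstacle is tracking the $\GG_m$-weight of $\Ocal_{\PP(V_{a,b})}(-1)$ through the cokernel presentation of $V_{a,b}$ and checking that no weight shift is hidden there; aside from this bookkeeping, the argument is formally parallel to \propref{prop:Pic GG} and requires no further geometric input.
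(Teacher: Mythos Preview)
Your proof is correct and follows essentially the same approach as the paper's own argument: both reduce to the $\PGL_{n+1}$-equivariant Picard group via \propref{prop:FF is a quotient}, embed it into $\Pic(\PP(V_{a,b}))\simeq\ZZ^{\oplus 2}$ using that $\PGL_{n+1}$ has no non-trivial characters, and then determine the sublattice by computing the $\GG_m$-weight of a general $\GL_{n+1}$-linearization and requiring it to vanish modulo $n+1$. Your treatment is in fact a bit more careful than the paper's in justifying why $\Ocal_{\PP(V_{a,b})}(-1)$ inherits $\GG_m$-weight $-b$ from the surjection $W_b\otimes\Ocal_{\PP(W_a)}\twoheadrightarrow V_{a,b}$; the paper simply writes down the linearization formula and reads off the condition $(n+1)p=al+bd$, which agrees with your conclusion up to the labelling ambiguity you already flagged.
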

\begin{proof}
    \propref{prop:FF is a quotient} implies that $\Pic(\FF(a,b,n))$ is isomorphic to the sublattice $\Pic^{\PGL_{n+1}}(\PP(V_{a,b}))$ of $\Pic(\PP(V_{a,b}))$.
    
    The latter is freely generated by $[\Ocal_{\PP(V_{a,b})}(-1)]$ and $[\pi^*\Ocal_{\PP(W_a)}(-1)]$. We have to find those elements that admits a $\PGL_{n+1}$-linearization.
    
    Any possible $\PGL_{n+1}$-linearization on $\Ocal_{\PP(V_{a,b})}(-d)\otimes\pi^*\Ocal_{\PP(W_a)}(-l)$ must be of the form:
    \[ A\cdot f(\underline{x})^{\otimes d}\otimes g(\underline{x})^{\otimes l}:=\det(A)^{p} f(A^{-1}\underline{x})^{\otimes d}\otimes g(A^{-1}\underline{x})^{\otimes l}\]
    The condition under which the scalars act trivially is that $(n+1)p=al+bd$. From this the Proposition follows.
\end{proof}

\mypar \label{par:hodge for FF} Observe that given a family of complete intersections $(\pi:X\subset P\to S)$ in $\FF(a,b,n)$, there exists a unique hypersurface $Y\subset P$ of degree $a$ that contains $X$.

We can define the sheaf:
\[ \Phi_{l,d}(S):=\pi_*\left(\omega_{X/S}^{\otimes d}\otimes(\omega_{Y/S}|_X)^{\otimes{l-d}}\otimes (\omega_{P/S}|_X)^{\otimes {p-l+d}}\right)\]

It is pretty straightforward to check that there exists a globally defined sheaf $\Phi_{l,d}$ whose pullback along $S\to\FF(a,b,n)$ is equal to $\Phi(S)$.

\begin{cor}\label{cor:Pic FF 2}
    Let $d,l,p$ be such that $ad+bl=(n+1)p$. The the sheaves $\Phi_{l,d}$ defined in \parref{par:hodge for FF} are line bundles. Morever, the Picard group of $\FF(a,b,n)$ is free of rank $2$ and it is formed by the line bundles $\Phi_{l,d}$ defined in \parref{par:hodge for FF}.
\end{cor}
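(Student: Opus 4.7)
The plan is to mimic the proof of \corref{cor:Pic GG 2}. Set $B:=\PP(V_{a,b})\setminus Z$ and let $f\colon B\to\FF(a,b,n)$ be the $\PGL_{n+1}$-torsor supplied by \propref{prop:FF is a quotient}. It suffices to show that $f^*\Phi_{l,d}$ is isomorphic to the generator $\Ocal_{\PP(V_{a,b})}(-d)\otimes\pi^*\Ocal_{\PP(W_a)}(-l)$ of $\Pic^{\PGL_{n+1}}(\PP(V_{a,b}))$ identified in \propref{prop:Pic FF}: this immediately implies that $\Phi_{l,d}$ is a line bundle, and letting $(l,d)$ range over the solutions of $ad+bl=(n+1)p$ then recovers the entire equivariant Picard group, proving both halves of the corollary.

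For the computation, set $\widetilde P:=B\times\PP^n$ with projections $q_1,q_2$, and let $X\subset Y'\subset\widetilde P$ be the universal family constructed in the proof of \propref{prop:FF is a quotient}. Straight from that construction, the conormal bundle of $Y'$ in $\widetilde P$ is $q_1^*\pi^*\Ocal_{\PP(W_a)}(-1)\otimes q_2^*\Ocal(-a)$, and the conormal bundle of $X$ in $Y'$ is $q_1^*\Ocal_{\PP(V_{a,b})}(-1)\otimes q_2^*\Ocal(-b)$ restricted to $Y'$. Combining these with $\omega_{\widetilde P/B}\simeq q_2^*\Ocal(-n-1)$ and applying adjunction twice yields explicit expressions
\[ \omega_{Y'/B}|_X\simeq q_1^*\pi^*\Ocal(1)|_X\otimes q_2^*\Ocal(a-n-1)|_X, \]
\[ \omega_{X/B}\simeq q_1^*\bigl(\pi^*\Ocal(1)\otimes\Ocal_{\PP(V_{a,b})}(1)\bigr)|_X\otimes q_2^*\Ocal(a+b-n-1)|_X. \]

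Plugging these into the defining formula of $\Phi_{l,d}$ in \parref{par:hodge for FF} and collecting terms, the line bundle on $X$ whose pushforward along $q_1|_X$ we wish to compute takes the form $q_1^*L|_X\otimes q_2^*\Ocal(N)|_X$, where $L=\Ocal_{\PP(V_{a,b})}(d)\otimes\pi^*\Ocal(l)$ and $N$ is an integer linear combination of $a,b,n+1,d,l,p$. A direct expansion shows that the relation $ad+bl=(n+1)p$, together with the specific balance of exponents chosen in the definition of $\Phi_{l,d}$, forces $N=0$. Applying the projection formula along $q_1$ together with $(q_1|_X)_*\Ocal_X\simeq\Ocal_B$---which follows from the Koszul resolution of $\Ocal_X$ in $\widetilde P$ once one notes that the fibres of $X\to B$ are connected---one obtains
\[ f^*\Phi_{l,d}\simeq\Ocal_{\PP(V_{a,b})}(d)\otimes\pi^*\Ocal_{\PP(W_a)}(l), \]
which is, up to dualization, the generator of \propref{prop:Pic FF}.

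The main obstacle is purely bookkeeping. In contrast to the equidegree situation of \corref{cor:Pic GG 2}, where only a single twist appears, here two independent twists $\Ocal_{\PP(V_{a,b})}(1)$ and $\pi^*\Ocal_{\PP(W_a)}(1)$ have to be propagated through a nested adjunction, and one has to verify carefully that the particular powers of $\omega_{X/S}$, $\omega_{Y/S}|_X$, and $\omega_{P/S}|_X$ appearing in the definition of $\Phi_{l,d}$ are calibrated exactly so that $N$ vanishes under the stated divisibility relation. Once this verification is carried out, the rest of the argument is a direct transcription of the proof of \corref{cor:Pic GG 2}.
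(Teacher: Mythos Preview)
Your proof is correct and follows essentially the same route as the paper: pull back along the $\PGL_{n+1}$-torsor of \propref{prop:FF is a quotient}, compute $\omega_{X/B}$, $\omega_{Y'/B}$ via the explicit normal bundles, and then use the projection formula together with the relation $ad+bl=(n+1)p$ to kill the $\PP^n$-twist. Your identification $f^*\Phi_{l,d}\simeq\Ocal_{\PP(V_{a,b})}(d)\otimes\pi^*\Ocal_{\PP(W_a)}(l)$ has the roles of $l$ and $d$ swapped relative to the paper's displayed formula, but this is harmless (the paper itself is not fully consistent in its labeling of $l$ versus $d$ between \propref{prop:Pic FF} and its proof), and you are slightly more careful than the paper in justifying $(q_1|_X)_*\Ocal_X\simeq\Ocal_B$.
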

\begin{proof}
    Set $\PP(V_{a,b})^o:=\PP(V_{a,b})\setminus Z$, where the latter is the open subscheme of $\PP(V_{a,b})$ introduced in \propref{prop:FF is a quotient}.
    
    We will argue as in the proof of \corref{cor:Pic GG 2}.
    Consider the $\PGL_{n+1}$-torsor $f:\PP(V_{a,b})\setminus Z\to\FF(a,b,n)$ given by \propref{prop:FF is a quotient}.
    
    We want to show that $f^*\Phi_{l,d}\simeq \Ocal_{\PP(V_{a,b})}(l)\otimes \pi^*\Ocal_{\PP(W_a)}(d)$. This would allows us to conclude by \propref{prop:Pic FF}.
    
    Let $X\subset \PP(V_{a,b})^o\times\PP^n$ be the complete intersection constructed in the proof of \propref{prop:FF is a quotient}, and let $Y$ be the pullback from $\PP(W_a)\times\PP^n$ of the universal degree $a$ hypersurface over $\PP(W_a)$.
    
    We have:
    \begin{align*}
        &\omega_{X/\PP(V_{a,b})^o}\simeq \det(N_{X/Y})\otimes \omega_{Y/\PP(V_{a,b}^o)}|_X \\
        &\omega_{Y/\PP(V_{a,b})^o}\simeq \det(N_Y)\otimes \pr_2^*\omega_{\PP^n}|_Y
    \end{align*}
    The normal bundle $N_{X/Y}$ of $X$ in $Y$ is isomorphic to \[\left(\pr_1^*\Ocal_{\PP(V_{a,b})}(1)\otimes\pr_2^*\Ocal_{\PP^n}(b)\right)|_X \]
    The normal bundle $N_Y$ of $Y$ in $\PP(V_{a,b})^o\times\PP^n$ is isomorphic to \[\pr_1^*\pi^*\Ocal_{\PP(W_a)}(1)\otimes\pr_2^*\Ocal_{\PP^n}(a)\]
    where $\pi:\PP(V_{a,b})^o\to\PP(W_a)$ is the projection morphism.
    
    A straightforward computation shows the following:
    \[ f^*\Phi_{l,d}\simeq \pr_{1*}\left(\pr_1^*(\Ocal_{\PP(V_{a,b})^o}(l)\otimes \pi^*\Ocal_{\PP(W_a)}(d) )\otimes \pr_2^*( \omega_{\PP^n}^{\otimes p}(ad+bl))\right) \] 
    Applying the projection formula, we get the desired conclusion.
\end{proof}

\mypar \label{par:FFrat} There are two open substacks of $\FF(a,b,n)$ which will play a role in the remainder of the paper, namely:
\begin{itemize}
    \item The stack $\FF(a,b,n)\rat$ of complete intersections with at most ADE singularities.
    \item The stack $\FF(a,b,n)\sm$ of smooth complete intersections.
\end{itemize}
\section{Recap on moduli of polarized K3 surfaces with at most rational double points}\label{sec:recap}
In this section we briefly recall some notions from the theory of moduli of polarized K3 surfaces with at most rational double points. A standard reference for this is \cite{Huy}*{Chapter 5}. In this section, every scheme is assumed to be defined over $\mathbb{Q}$.

\mypar \label{par:def quasi-pol} Let $X$ be a K3 surface over a field $k$ with at most rational double points, i.e. a geometrically connected, integral and proper scheme of dimension $2$ over $\Spec(k)$ whose singularities are at most rational double points, and such that $\omega_{X/k}\simeq\Ocal_X$ and $H^1(X,\Ocal_X)=0$. 

The Picard functor $\Pic_{X/k}$ is represented by a separated, smooth $0$-dimensional scheme over $k$: we define a \emph{polarization of degree} $l$ for $X$ as a rational point $\sigma$ of $\Pic_{X/k}$ that on the algebraic closure $\overline{k}$ of $k$ is the class of an ample line bundle $L$ such that $(L^2)=l$, where the degree of a line bundle is defined as the self-intersection number of its first Chern class.
A polarization $\sigma$ is primitive if the line bundle $L$ is not divisible in $\Pic_{X_{\overline{k}}/\overline{k}}$.

\mypar A \emph{family of K3 surfaces with at most rational double points} is a proper and flat morphism of schemes $\pi:X\to S$ whose fibres are K3 surfaces with at most rational double points.

As before, the relative Picard functor $\Pic_{X/S}$ is represented by a separated algebraic space, locally of finite presentation: we define a polarization of degree $l$ of the family $\pi:X\to S$ as a section $\sigma$ of $\Pic_{X/S}\to S$ such that for every geometric point $\overline{s}$ of $S$ the restriction $\sigma_{\overline{s}}$ is a polarization of degree $l$ for $X_{\overline{s}}\to \overline{s}$. 

A polarization for a family of K3 surfaces is primitive if its restriction to every geometric fibre is primitive.

An isomorphism of polarized K3 surfaces with at most rational double points $(X,\sigma)\to (X',\sigma')$ is an isomorphism $f:X\to X'$ such that $f^*\sigma'=\sigma$. Similarly, an isomorphism of families of polarized K3 surfaces with at most rational double points $(X/S,\sigma)\to (X'/S',\sigma')$ is given by a morphism $f:S\to S'$ together with an isomorphism $g:X\to X'\times_{S'} S$ such that t$g^*\sigma'=\sigma$.

\mypar Let $2l\geq 4$ be a positive number and define $\Mcal_{2l}$ as the fibred category over the site of schemes $(Sch/\mathbb{Q})_{fppf}$ whose objects are families of K3 surfaces with at most rational double points together with a primitive polarization of degree $2l$. 

The morphisms in $\Mcal_{2l}$ are isomorphisms of polarized K3 surfaces. By \cite{Huy}*{pg. 84}(note that in \emph{loc. cit.} the stack $\Mcal_{2l}$ is denoted $\bar{\Mcal}_d$) we have that $\Mcal_{2l}$ is a smooth Deligne-Mumford stack that admits a coarse moduli space $M_{2l}$: this scheme coincides with the usual moduli space of polarized K3 surfaces with at most rational double points constructed through the period map.


\mypar The integral Picard group $\Pic(\Mcal_{2l})$ can be defined either as the group of invertible sheaves over $\Mcal_{2l}$ or as the first cohomology group $H^1(\Mcal_{2l},\Ocal^*)$: these two approaches are in fact equivalent (see \cite{Mum63}*{Sec. 5}). 

Moreover, due to the fact that $\Mcal_{2l}$ is smooth, there is an isomorphism between the divisor class group of $\Mcal_{2l}$ and its Picard group: in particular,  every closed substack $\Dcal\subset\Mcal_{2l}$ of pure codimension $1$ induces an exact sequence:
\[ \oplus_i \ZZ\cdot [\Dcal_i] \to \Pic(\Mcal_{2l}) \to \Pic(\Mcal_{2l}\setminus \Dcal) \to 0 \]
where the $\Dcal_i$ are the irreducible components of $\Dcal$.

\mypar \label{par:NL} We recall here the definition of the Noether-Lefschetz divisors inside $\Mcal_{2l}$: for a more detailed account of this theory, see \cite{MP}*{Sec. 1}. 
For $d>0$, the substack $\Dcal_{d,h}$ is defined as the stack of primitively polarized K3 surfaces with at most rational double points $(X\to S,\sigma)$ of degree $2l$ such that there exists an element $\beta$ in $\Pic_{X/S}(S)$ whose restriction over any geometric point $\overline{s}$ of $S$ satisfies:
\[ (\beta\cdot\beta)_{X_{\overline{s}}}=2h-2,\quad (\beta\cdot\sigma)_{X_{\overline{s}}}=d. \]
We also define $\Dcal_{0,0}$ as the substack of K3 surfaces that have at least a singular point.
If $d^2+2l(1-h)>0$ then $\Dcal_{d,h}\subset \Mcal_{2l}$ is an irreducible divisor. The divisors defined in this way are called \emph{Noether-Lefschetz divisors}.

\mypar \label{par:hodge line bundle} Consider the sheaf on $\Mcal_{2l}$ defined as follows:
\[ (\pi:X\to S,\sigma)\longmapsto \pi_*\omega_{X/S} \]
This sheaf is actually a line bundle.

Indeed, observe that the restriction of $\omega_{X/S}$ to any fibre $\pi^{-1}(s)=:X_s$ is trivial.
Applying the cohomology and base change theorem (\cite{Hart}*{Thm. 3.12.11}) we immediately deduce that $R^1\pi_*\omega_{X/S}=0$ and that $\pi_*\omega_{X/S}$ is locally free of rank equal to $h^0(X_s\Ocal_{X_s})$, i.e. is a line bundle.

We will refer to this line bundle as the \emph{Hodge line bundle}, and its class in $\Pic(\Mcal_{2l})$ will be denoted $\lambda_1$.

\begin{lm}\label{lm:hodge for K3}
     Let $(\pi:X\to S,\sigma)$ be a family of primitively polarized K3 surfaces with at most rational double points of degree $2l$.
     Up to passing to an \'{e}tale cover of $X$, we can assume that there is a line bundle $L$ such that $\sigma=[L]$ in $\Pic(X)/\pi^*\Pic(S)$.
     Then $\pi_*(L^{\otimes p})$ is a locally free sheaf on $S$ of rank $p^2l+2$. 
\end{lm}
\begin{proof}
    We will show that $\pi_*(L^{\otimes p})$ is locally free using the cohomology and base change theorem (\cite{Hart}*{Thm. 3.12.11}). All we need to do is to prove that $H^1(X_{\overline{s}},L_{\overline{s}}^{\otimes p})$ is equal to $0$ for every geometric point of $S$.
    
    This can be done using the Kawamata-Viehweg vanishing theorem, as $L^{\otimes p}$ is ample.
    
    We know by hypothesis that $(L_{\overline{s}}^2 )=2l$ and moreover $h^1(L_{\overline{s}}^{\otimes p})=h^2(L_{\overline{s}}^{\otimes p})=0$ by Kawamata-Viehweg vanishing theorem. Riemann-Roch formula then tells us that $h^0(L_{\overline{s}})=2+p^2l$.
\end{proof}

\mypar \label{par:psiL} Let $(\pi:X\to S,\sigma)$ be a family of primitively polarized K3 surfaces with at most rational double points. \lmref{lm:hodge for K3} gives us a line bundle $L$ on $X$ and the canonical morphism $\pi^*\pi_*L\to L$ induces a rational morphism $X\dashrightarrow \PP((\pi_*L)^{\vee})$ of $S$-schemes.

Being $X$ normal and $\PP(\pi_*L^\vee)$ proper over $S$, the rational morphism extends in a unique way to the complement of a codimension $2$ subscheme of $X$. By \cite{SD}*{Cor. 3.2}, there exists a unique extension $\psi_\sigma:X\to\PP(\pi_*L^\vee)$ of the rational morphism to the whole family. 

Observe that  the morphism $\psi_\sigma$ is uniquely determined by $\sigma$, although the line bundle $L$ is not.

\begin{prop}\label{prop:proj model}
     Let $(X,\sigma)$ be a primitively polarized K3 surface with at most rational double points of degree $2l\geq 4$, $L$ a line bundle on $X$ representing $\sigma$ and $\psi_\sigma:X\to\PP( H^0(X,L)^\vee)$ the induced morphism described in \parref{par:psiL}. Then one of the following occurs:
     \begin{enumerate}
         \item $\psi_\sigma$ is birational onto its image.
         \item $\psi_\sigma$ is finite of degree $2$: this happens if and only if $(X,\sigma)$ is a point of $\Dcal_{2,1}$ in $\Mcal_{2l}$.
         \item The image of $\psi_\sigma$ is a curve: this happens if and only if $(X,\sigma)$ is a point of $\Dcal_{1,1}$ in $\Mcal_{2l}$.
     \end{enumerate}
\end{prop}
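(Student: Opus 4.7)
The plan is to combine a degree count on $\psi_\sigma$ with Saint-Donat's classification of projective models of K3 surfaces \cite{SD}. By \lmref{lm:hodge for K3} applied with $p=1$, we have $h^0(X,L) = l+2$, so $\psi_\sigma$ is a morphism $X \to \PP^{l+1}$, and since $L$ is big the image $Y$ has dimension $1$ or $2$. If $\dim Y = 2$, the projection formula yields $\deg(\psi_\sigma)\cdot \deg(Y) = L^2 = 2l$; since $|L|$ is complete, $Y$ is non-degenerate in $\PP^{l+1}$ and so $\deg(Y) \geq \operatorname{codim}(Y)+1 = l$. Hence $\deg(\psi_\sigma) \in \{1,2\}$, which produces cases (1) and (2).

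The remaining task is to identify case (3) with $\Dcal_{1,1}$ and case (2) with $\Dcal_{2,1}$. For case (3), a surjective morphism from a K3 surface onto a curve must factor through an elliptic fibration $\pi\colon X \to \PP^1$ whose general fiber $F$ satisfies $F^2 = 0$. Writing $L = M + R$ with $M$ the moving part (necessarily a multiple of $F$) and $R$ the fixed part, a direct intersection-number computation using $L^2 = 2l$ yields $L = (l+1)F + R$ with $R$ a $(-2)$-section of $\pi$, so $F\cdot L = 1$ and $(X,\sigma) \in \Dcal_{1,1}$. For case (2), Saint-Donat's analysis identifies $Y$ as a rational normal scroll of degree $l$ in $\PP^{l+1}$ (the Veronese surface is excluded by primitivity of $\sigma$), and pulling back its ruling produces an elliptic pencil $|E|$ on $X$ with $E^2 = 0$ and $E\cdot L = 2$, so $(X,\sigma) \in \Dcal_{2,1}$. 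The converse implications follow from Riemann--Roch on the class $\beta$ combined with Saint-Donat's theorem on the base-point structure of $|L|$.

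The main obstacle is this last step: translating the Noether--Lefschetz conditions back into the geometric behavior of $\psi_\sigma$. One must verify that the existence of a class $\beta$ with $\beta^2 = 0$ and $\beta\cdot\sigma \in \{1,2\}$ forces $|L|$ to have precisely the predicted fixed part or double-cover structure, and that no further numerical configuration competes with these. This is the content of Saint-Donat's detailed classification, which I would cite rather than reprove.
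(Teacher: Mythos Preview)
Your proposal is correct and follows essentially the same route as the paper: both arguments reduce cases (1) and (2) directly to Saint-Donat's results \cite{SD}*{(4.1), Thm.~5.2}, and both handle case (3) by decomposing $L$ into its moving part $kE$ and fixed part $R$, using Saint-Donat's structure theorem on the fixed component together with the quasi-polarization hypothesis to force $(L\cdot E)=1$. One small caveat: your claim that ``a direct intersection-number computation'' yields that $R$ is a $(-2)$-section is slightly optimistic---the paper makes explicit that this step needs \cite{SD}*{(2.7.4)} plus the nefness/quasi-polarization condition on $L$ (to rule out $(L\cdot E)=0$), not just the equation $L^2=2l$; but since you ultimately attribute the fine structure to Saint-Donat anyway, the argument goes through.
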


\begin{proof}
    Consider the resolution $\rho:X'\to X$ together with the pullback of $\sigma$: this pair is a primitively quasi-polarized K3 surface, meaning that $\sigma'=\rho^*\sigma$ is nef and big, and for every curve $C$ such that $\sigma\cdot C=0$, we have $(C)^2=(-2)$. We will work with $(X',\sigma')$ rather than $X$ in what follows, so to leverage some results of Saint-Donat.
    
    We start by proving $(3)$.
    
    Suppose that the image of $\psi_{\sigma'}:X'\to\PP(H^0(X,L)^\vee)$ is a curve $C$. Denote $H$ a hyperplane section of $\PP(H^0(X,L)^\vee)$: we can assume that $H\cap C=p_1+\dots +p_k$, where the $p_i$ are all regular and distinct. Then $L=\psi_{\sigma'}^*(H\cap X)=D+(E_1+\dots +E_k)$, where $D$ is the fixed component of $L$ and each $E_i$ has genus $1$ (this follows from the adjunction formula).
    
    Observe that $(E_i^2)=0$ and $(E_i\cdot E_j)=0$, which implies that $D\neq 0$. Moreover, by \cite{SD}*{Prop.2.6} the divisor $E_1+\dots +E_k$ is linearly equivalent to $kE$ for some genus $1$, irreducible curve $E$. By \cite{SD}*{(2.7.4)} we have $D=D_1+\dots +D_n$ with $(D_1\cdot E)=1$ and $(D_i\cdot E)\leq 0$ for $i>1$. By hypothesis, as $(E^2)\neq (-2)$, we have:
    \[ 0<(L\cdot E)=k(E^2)+(D_1\cdot E)+\sum_{i>1} (D_i\cdot E)=1+\sum_{i>1} (D_i\cdot E) \]
    The inequality $(D_i\cdot E)\leq 0$ for $i>1$ implies that $(D_i\cdot E)=0$ and $(L\cdot E)=1$, hence $(X,\sigma)$ is in $\Dcal_{1,1}$.
    
    Suppose now that there exists $E\subset X'$ such that $(E^2)=0$ and $(E\cdot L)=1$. Then $(L-(l+1)E)^2=(-2)$ and $(L\cdot (L-(l+1)E))=l-2$. For $2l\geq 4$, this implies that there exists an effective divisor $D$ with $(D^2)=-2$ such that $L=(l+1)E+D$. Observe that $h^0(\Ocal(E))=2$, hence $h^0(\Ocal((l+1)E))\geq l+2$. By hypothesis $h^0(L)=l+2$, thus $h^0(\Ocal((l+1)E))=l+2$ and $D$ is a fixed component.
    
    This implies that $\psi_{\sigma'}$ is actually the morphism $\psi_{(l+1)E}$ induced by the base-point free linear system $|(l+1)E|$. Moreover, every divisor in $|E|$, which has dimension $1$, get contracted by $\psi_{(l+1)E}$, so that the image of this morphism must necessarily be a curve.
    
    Point $(2)$ is \cite{SD}*{Thm. 5.2}, and point $(1)$ is \cite{SD}*{(4.1)}.
\end{proof}


\section{Integral Picard group of $\Mcal_{2l}$ for $l=2,3,4$}\label{sec:Pic}

In this section we compute the Picard group of $\Mcal_{2l}$ for $l=2,3,4$  (see \thmref{thm:Pic K4}, \thmref{thm:Pic(K6)} and \thmref{thm:Pic K8}).

These Picard groups turn out to be free abelian groups on respectively $3$, $4$ and $4$ generators. Moreover, the generators of these Picard groups are in each case given by the Hodge line bundle (see \parref{par:hodge line bundle}) and some (elliptic) Noether-Lefschetz divisors (see \parref{par:NL}).

From now on, every polarization is assumed to be primitive.

Moreover, we will always be assuming that there exists a line bundle representing the polarization.

This may not be true on the nose, but only up to passing to an \'{e}tale cover. Nevertheless, this technical detail will not be playing any role in our arguments, so its omission should be harmless.

\subsection{Computation of $\Pic(\Mcal_4)$}

\mypar Let $(X\to S,\sigma)$ be a family of polarized K3 surfaces with at most rational double points of degree $4$. \propref{prop:proj model} tells us that the polarization $\sigma=[L]$ induces a morphism $\psi_\sigma:X\to\PP(\pi_*L^\vee)$, which can be either birational, finite of degree $2$ or of relative dimension $1$.

Moreover, the substack of families whose polarization induces a birational morphism is an open substack of $\Mcal_4$, namely the complement of the Noether-Lefschetz divisors $\Dcal_{1,1}$ and $\Dcal_{2,1}$. We will refer to this open substack as $\Ucal_4$.

Recall that in \parref{par:GGrat} we introduced the stack $\GG(d,m,n)\rat$ of complete intersections of equidegree $d$ and codimension $m$ inside a Brauer-Severi variety with $n$-dimensional fibres, having at most ADE singularities.

\begin{prop}\label{prop:U4 iso}
 The stack $\Ucal_4$ is isomorphic to $\GG(4,1,3)\rat$.
\end{prop}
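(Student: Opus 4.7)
The plan is to exhibit mutually inverse morphisms between $\Ucal_4$ and $\GG(4,1,3)\rat$ by combining the linear system morphism $\psi_\sigma$ of \parref{par:psiL} with the operation of simultaneous minimal resolution of ADE singularities.

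For the forward direction, given $(\pi\colon X\to S,\sigma)\in\Ucal_4$ represented \'{e}tale locally by a line bundle $L$, \lmref{lm:hodge for K3} applied with $p=1$ and $l=2$ shows that $\pi_*L$ is locally free of rank $4$; the projectivization $\PP(\pi_*L^\vee)\to S$ is independent of the particular choice of $L$ (any other choice differs by tensoring with a pullback from $S$) and so descends to a Brauer--Severi variety over $S$ with $3$-dimensional fibres. By definition of $\Ucal_4$ and \propref{prop:proj model}, the morphism $\psi_\sigma$ is fibrewise birational, so its scheme-theoretic image $Y$ is a flat family of integral quartic surfaces of degree $(L^2)=4$. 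Because $X_s$ is a smooth K3 surface and the polarizing line bundle is nef and big, the contracted curves are $(-2)$-curves and the contraction is crepant, so each $Y_s$ has only rational double points, i.e.\ ADE singularities. This produces the desired object of $\GG(4,1,3)\rat$.

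For the inverse direction, I would send an object $(Y\subset P\to S)\in\GG(4,1,3)\rat$ to the pair $(X\to S,\sigma)$, where $X\to Y$ is the simultaneous minimal resolution and $\sigma$ is the class of the pullback of $\Ocal_P(1)|_Y$; fibrewise $X_s$ is the minimal resolution of a normal quartic surface with ADE singularities, hence a smooth K3 surface, and $\sigma$ is a nef class of degree $4$ in $\Pic_{X/S}$. Primitivity is automatic, since a decomposition $\sigma=k\sigma'$ with $k>1$ would force $(\sigma')^2=4/k^2$, which cannot be a positive even integer, whereas on a K3 surface every divisor class has even self-intersection by Riemann--Roch. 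Composition with the forward morphism recovers $(Y\subset P)$, because the image of $\psi_\sigma$ on each fibre is the quartic $Y_s$ we started with and $P$ is reconstructed as $\PP(\pi_*L^\vee)$; conversely, starting from $(X,\sigma)\in\Ucal_4$, the forward morphism produces the contraction $X\to Y$ which is itself the minimal resolution of $Y$, so the inverse morphism recovers $X$ uniquely.

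The main obstacle is making sense of simultaneous minimal resolution as a morphism of stacks, since a family of surfaces with ADE singularities need not admit a simultaneous resolution \'{e}tale locally on the base (there can be monodromy among the exceptional components). For an equivalence of stacks what matters is that the resolution exists uniquely fibrewise and that the smooth K3 family, constructed after passing to a suitable finite cover marking the dual graphs of the exceptional divisors, descends back to $S$; this is guaranteed by Brieskorn's simultaneous resolution theorem combined with the uniqueness of minimal models in dimension two, and yields the desired equivalence of groupoids.
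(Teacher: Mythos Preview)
Your forward construction agrees with the paper's: both send $(X,\sigma)$ to its projective model $\overline X\subset\PP(\pi_*L^\vee)$. The divergence is in how you argue this is an isomorphism. You attempt to build an explicit inverse via simultaneous minimal resolution, and you correctly flag the obstacle---but your resolution of it is not valid. Brieskorn's theorem says that a simultaneous resolution exists \emph{after a finite base change} (by the Weyl group), and in general it does \emph{not} descend back to $S$: the two resolutions over a double cover can differ by a flop of the total threefold, so there is no canonical descent datum. Uniqueness of minimal models for surfaces is a fibrewise statement and does not control the family; indeed the total space is a threefold, where minimal models are non-unique. Artin's strengthening produces a simultaneous resolution over $S$ itself only in the category of algebraic spaces, whereas the paper's definition of a family of K3 surfaces requires $X\to S$ to be a morphism of schemes. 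So your inverse morphism, as stated, is not well-defined.

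The paper avoids all of this by never constructing the inverse. It proves that the forward morphism $f\colon\Ucal_4\to\GG(4,1,3)\rat$ is representable (no nontrivial automorphism of $X$ descends to the identity on $\overline X$), birational (an isomorphism over the smooth locus $\GG(4,1,3)\sm$), separated, quasi-finite and surjective (existence and uniqueness of the minimal resolution checked pointwise), and then invokes the Zariski Main Theorem for representable morphisms of algebraic stacks to conclude that $f$ is an isomorphism. This strategy reduces everything to fibrewise statements, where minimal resolution is unproblematic, and is the clean way to finish.
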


\begin{proof}
    Given a family of polarized K3 surfaces with at most rational double points $(\pi:X\to S,\sigma)$ of degree $4$ whose polarization induces a birational morphism $\psi_\sigma:X\to \PP(\pi_*L^\vee)$, the image $\overline{X}:=\psi_\sigma(X)$ will be a degree $4$ hypersurface in $\PP(\pi_*L^\vee)$ whose singularities are at most rational double points. 
    
    An automorphism of the family of polarized K3 surfaces will induce an automorphism of $\PP(\pi_*L^\vee)$, hence of $\overline{X}$.
    
    Therefore, we have a well defined morphism of stacks $f:\Ucal_4\to \GG(4,1,3)\rat$, which sends a family of polarized K3 surfaces with at most rational double points $(X\to S,\sigma)$ of degree $4$ to its projective model $(\overline{X}\subset \PP(\pi_*L^\vee))$. We claim that this is actually an isomorphism of stacks.

    For showing this, we construct explicitly the inverse map: given an object $(X\subset P\to S)$ of $\GG(4,1,3)\rat(S)$, up to passing to an \'etale cover of $S$, we can suppose that $P\simeq \PP(E)$ for some vector bundle $E\to S$. In particular, there is a well defined line bundle $\Ocal(1)$ on $\PP(E)$, which we can restrict to $X$. Then the pair $(X\to S,\Ocal(1)|_{X})$ is a polarized K3 surface with at most rational double points of degree $4$, and such that the polarization is very ample.

    This defines a morphism $\GG(4,1,3)\rat \to \Ucal_4$ which is the inverse of $f$, thus concluding the proof.

\end{proof}

\begin{cor}\label{cor:U4 is a quot}
    The stack $\GG(4,1,3)\rat$ is smooth, open in $\GG(4,1,3)$ and its complement has codimension $>1$.
\end{cor}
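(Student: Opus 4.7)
The plan is to deduce the three assertions of the corollary as easy consequences of \propref{prop:U4 iso} together with the geometry of the discriminant in $\GG(4,1,3)$.

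First, smoothness of $\GG(4,1,3)\rat$ follows immediately from the isomorphism $\Ucal_4\simeq \GG(4,1,3)\rat$ established in \propref{prop:U4 iso}: indeed, $\Ucal_4$ is an open substack of $\Kcal_4$, and the stack of primitively quasi-polarized K3 surfaces is a smooth Deligne--Mumford stack by \cite{Ols04}*{Thm. 6.2}. So nothing further needs to be done for the smoothness assertion.

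Second, for openness, I would argue that having at most ADE singularities is an open condition on a flat family of surfaces. This is standard: ADE singularities are exactly the rational double points, they are rigid in the sense that they deform only to milder ADE configurations (or smooth out), so the locus where the geometric fibres acquire a non-ADE singularity is closed. Concretely, on the universal family $X \to \Gras\setminus Z$ over the quotient presentation of $\GG(4,1,3)$, one can stratify by the analytic type of the worst singularity occurring in the fibre; the stratum of fibres with at most ADE singularities is open. This transfers to the stack $\GG(4,1,3)\rat\subset\GG(4,1,3)$.

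Third, and this is the part that requires the geometric input, I would show that the complement of $\GG(4,1,3)\rat$ in $\GG(4,1,3)$ has codimension strictly greater than one. Recall from the proof of \propref{prop:GG is a quotient} that the singular locus of the universal family over the quotient presentation of $\GG(d,m,n)$ is an irreducible divisor; in particular the discriminant $\Delta\subset \GG(4,1,3)$ (the complement of $\GG(4,1,3)\sm$) is an irreducible closed substack of codimension exactly one. The key point is then to observe that the generic quartic on $\Delta$ has a single $A_{1}$-node as its only singularity, and in particular lies in $\GG(4,1,3)\rat$. So $\GG(4,1,3)\rat$ contains $\GG(4,1,3)\sm$ together with a dense open substack of the discriminant $\Delta$, and its complement is therefore a proper closed substack of the irreducible divisor $\Delta$, which forces codimension at least two.

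The main subtlety is the claim that the generic point of the discriminant corresponds to a quartic with a single node; this is a classical fact for quartic surfaces in $\PP^{3}$, and can be verified directly by exhibiting a one-parameter deformation of a nodal quartic whose general member is smooth, showing that the nodal locus maps onto a component of $\Delta$ of codimension one. All other singularity types either degenerate from nodes or are of higher codimension, so no further components of $\Delta$ fall outside $\GG(4,1,3)\rat$.
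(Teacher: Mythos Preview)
Your proof is correct and follows essentially the same line as the paper's: smoothness via the isomorphism with $\Ucal_4$ (an open in the smooth stack $\Kcal_4$), and codimension $>1$ for the complement by noting it is a proper closed substack of the irreducible discriminant divisor. The only difference is your argument for openness: you invoke directly that ``at most ADE'' is an open condition on flat families of surfaces, whereas the paper instead observes that $\GG(4,1,3)\rat$, being smooth of the same dimension as the irreducible stack $\GG(4,1,3)$, must be open. Both are short and valid; your direct route has the mild advantage of not needing to first know that $\GG(4,1,3)\rat$ sits inside $\GG(4,1,3)$ as a locally closed substack, while the paper's dimension argument avoids having to justify the openness of the ADE locus. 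Your added remark that the generic point of the discriminant is a nodal quartic makes explicit what the paper leaves as ``strictly contained'', which is a welcome clarification.
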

\begin{proof}
    By \propref{prop:U4 iso} we know that $\GG(4,1,3)\rat\simeq\Ucal_4$, and this is an open substack of the Deligne-Mumford stack $\Mcal_4$, which is smooth (\cite{Huy}*{pg. 84}). 
    
    This readily implies that $\GG(4,1,3)\rat$ is smooth inside $\GG(4,1,3)$, and it has the same dimension, hence it is open. 
    
    The complement of $\GG(4,1,3)\rat$ is strictly contained in the divisor of singular complete intersections, which is irreducible, as discussed in the end of the proof of \propref{prop:GG is a quotient}): therefore, the complement of $\GG(4,1,3)\rat$ must have codimension $>1$.
\end{proof}

\mypar Let $E$ be the standard $\GL_4$-representation, and define $W_4:=\Sym^4 E^{\vee}$.

\label{par:lambda4} Consider the line bundle $\Ocal(-1)$ over $\PP(W_4)$ with the $\PGL_4$-linearization given by the formula
\[ A\cdot f(\underline{x}):= \det(A)f(A^{-1}\underline{x}) \]
By \propref{prop:Pic GG}, the $\PGL_4$-quotient of $\Ocal(-1)$ defines a line bundle over the stack $\GG(4,1,3)$ which freely generate its Picard group. 

\corref{cor:U4 is a quot} tells us that the codimension of the complement of $\GG(4,1,3)\rat$ in $\GG(4,1,3)$ is $>1$, hence the Picard groups of the two stacks are isomorphic. By \propref{prop:U4 iso} we know that $\Ucal_4\simeq \GG(4,1,3)\rat$, and then we deduce that $\Pic(\Ucal_4)$ is freely generated over one element.

From \corref{cor:Pic GG 2} we deduce that the generator of $\Pic(\Ucal_4)$ is the restricted Hodge line bundle $\lambda_1|_{\Ucal_4}$.

We are ready to prove the main result of the subsection.

\begin{thm}\label{thm:Pic K4}
    We have:
    \begin{enumerate}
        \item $\Pic(\Ucal_4)\simeq \ZZ\cdot \lambda_1|_{\Ucal_4}$.
        \item $\Pic(\Mcal_4)\simeq \ZZ\cdot [\Dcal_{1,1}]\oplus \ZZ\cdot [\Dcal_{2,1}] \oplus \ZZ\cdot\lambda_1$.
    \end{enumerate}
\end{thm}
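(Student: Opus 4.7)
The plan is to combine what has been set up immediately before the theorem with an excision argument, the only delicate point being that no non-trivial relation can hold between the classes of the two Noether--Lefschetz divisors modulo $\lambda_1$.

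Part (1) is essentially already in hand: I would string together \propref{prop:U4 iso}, which identifies $\Ucal_4 \simeq \GG(4,1,3)\rat$; \corref{cor:U4 is a quot}, which shows this is open in $\GG(4,1,3)$ with complement of codimension greater than one, so that pull-back gives an isomorphism of Picard groups; \propref{prop:Pic GG} with $(d,m,n)=(4,1,3)$, where $n+1 = md = 4$ forces the exponents $k = p = 1$, yielding $\Pic(\GG(4,1,3)) \simeq \ZZ$; and \corref{cor:Pic GG 2}, which identifies the generator with the canonically defined sheaf $\Phi$. Since on a quartic $X \subset P$ one has $\omega_{X/S} \simeq \omega_{P/S}|_X \otimes \cO_P(4)|_X$, the defining formula for $\Phi$ collapses to $\pi_* \omega_{X/S}$, so the generator of $\Pic(\Ucal_4)$ matches $\lambda_1|_{\Ucal_4}$.

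For part (2), I would invoke the divisor-class excision sequence from Section \ref{sec:recap}. By \propref{prop:proj model}, the complement $\Kcal_4 \setminus \Ucal_4$ is the union of the two irreducible divisors $\Dcal_{1,1}$ and $\Dcal_{2,1}$, so
\[
\ZZ \cdot [\Dcal_{1,1}] \oplus \ZZ \cdot [\Dcal_{2,1}] \xrightarrow{\phi} \Pic(\Kcal_4) \xrightarrow{r} \Pic(\Ucal_4) \to 0.
\]
Part (1) shows that $r(\lambda_1) = \lambda_1|_{\Ucal_4}$ freely generates the right-hand group, so the sequence splits via $\lambda_1|_{\Ucal_4} \mapsto \lambda_1$, and $\Pic(\Kcal_4) \simeq \ZZ \cdot \lambda_1 \oplus \im(\phi)$.

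The main obstacle is then injectivity of $\phi$, equivalently, ruling out any relation $a[\Dcal_{1,1}] + b[\Dcal_{2,1}] = 0$ with $(a,b) \neq (0,0)$. My plan is a symmetric restriction argument: such a relation restricted to $\Kcal_4 \setminus \Dcal_{2,1}$ gives $a[\Dcal_{1,1}]|_{\Kcal_4 \setminus \Dcal_{2,1}} = 0$, and restricted to $\Kcal_4 \setminus \Dcal_{1,1}$ gives $b[\Dcal_{2,1}]|_{\Kcal_4 \setminus \Dcal_{1,1}} = 0$, so it suffices to show each restricted class is non-torsion. To achieve this, I would provide, for each $d \in \{1,2\}$, a quotient-stack presentation of an open substack of $\Dcal_{d,1}$ analogous to the one for $\Ucal_4$: $\Dcal_{2,1}$ is birational to the stack of plane sextics with at most ADE singularities, which is a form of $\GG(6,1,2)\rat$, while $\Dcal_{1,1}$ admits a Weierstrass-type presentation as a stack of elliptic fibrations. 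The normal bundle $[\Dcal_{d,1}]|_{\Dcal_{d,1}}$ then becomes a concrete equivariant line bundle whose non-triviality can be read off directly. If these presentations prove elusive, the fallback is a test-curve argument: a generic pencil of quartics specializing transversely to a smooth member of $\Dcal_{d,1}$ while avoiding the other divisor pairs non-trivially with $[\Dcal_{d,1}]$ and hence forces it to be non-torsion.
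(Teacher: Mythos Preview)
Your treatment of part (1) and the excision setup for part (2) match the paper exactly. The divergence is in how you establish injectivity of $\phi$. The paper does this in one line: since \cite{Bru}*{(6)} gives $\operatorname{rk}\Pic_{\mathbb{Q}}(\Kcal_4)\geq 3$, and the right-hand side of the excision sequence has rank $1$, the map on the left must be injective. Your proposal instead sketches a geometric attack --- restricting to $\Kcal_4\setminus\Dcal_{2,1}$ and $\Kcal_4\setminus\Dcal_{1,1}$, then reading off non-torsion of each $[\Dcal_{d,1}]$ via quotient-stack models of the divisors or test curves. This is logically sound in outline, but it is substantially more work: the Weierstrass-type presentation of $\Dcal_{1,1}$ and the computation of the relevant normal-bundle classes are not set up anywhere in the paper, and the test-curve fallback would require controlling transversality and disjointness from the other divisor. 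The paper's rank argument sidesteps all of this by importing a single external fact about the Noether--Lefschetz sublattice; your approach would be self-contained but considerably longer, and as written remains a plan rather than a proof.
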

\begin{proof}
    We already proved (1) in \parref{par:lambda4}. Consider the exact sequence
    \[ \ZZ\cdot[\Dcal_{1,1}]\oplus\ZZ\cdot[\Dcal_{2,1}]\to\Pic(\Mcal_4)\to\ZZ\cdot(\lambda_1|_{\Ucal_4})\to 0 \]
    We know from \cite{Bru}*{(6)} that $\Pic_{\mathbb{Q}}(\Mcal_4)$ has rank $\geq 3$, hence the arrow on the left in the sequence above must be injective. This implies (2).
\end{proof}

\subsection{Degree six case}

\mypar Let $\Ucal_6$ be the open substack of $\Mcal_6$ formed by those families $(\pi:X\to S,\sigma)$ of polarized K3 surfaces of degree $6$ with at most rational double points such that the polarization induces a birational morphism, i.e. the image of $\psi_{\sigma}:X\to P$ is birational to $X$.

Recall that in \parref{par:FFrat} we defined the stack $\FF(a,b,n)\rat$ whose objects are families of complete intersections of codimension $2$ and bidegree $(a,b)$ in a Brauer-Severi variety having $n$-dimensional fibres such that the fibres have at most ADE singularities.

\begin{prop}\label{prop:U6 iso}
    The stack $\Ucal_6$ is isomorphic to $\FF(2,3,4)\rat$.
\end{prop}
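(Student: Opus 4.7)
The plan is to mimic the argument of \propref{prop:U4 iso}, replacing \propref{prop:proj model}(1) with the classical fact (Saint-Donat) that a K3 surface of degree $6$ whose quasi-polarization induces a birational morphism has as projective model a complete intersection of a quadric and a cubic in $\PP^4$ with at most rational double points.

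First I would construct the morphism $f\colon \Ucal_{6}\to \FF(2,3,4)\rat$. Given an object $(\pi\colon X\to S,\sigma)$ of $\Ucal_{6}$, after possibly passing to an \'etale cover we may write $\sigma=[L]$; by \lmref{lm:hodge for K3} applied with $p=1$ and $l=3$, the sheaf $\pi_*L$ is locally free of rank $5$, so $P\eqdef\PP(\pi_*L^{\vee})\to S$ is a Brauer--Severi variety of relative dimension $4$. By \parref{par:psiL} the quasi-polarization induces a morphism $\psi_\sigma\colon X\to P$, which by definition of $\Ucal_6$ is birational onto its image $\overline{X}\eqdef \psi_\sigma(X)$. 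Fibre by fibre, the results of Saint-Donat (\cite{SD}*{\S7}) show that $\overline{X}_{\overline{s}}$ is a complete intersection of a quadric and a cubic in $\PP^{4}_{\overline{s}}$; the fact that $X_{\overline{s}}\to \overline{X}_{\overline{s}}$ is the minimal resolution of a K3 then forces the singularities to be rational double points, as the exceptional locus is a chain of $(-2)$-curves. This gives an object of $\FF(2,3,4)\rat$, and functoriality in $S$ is straightforward; any automorphism of $(X\to S,\sigma)$ descends to an automorphism of the family of projective models, defining $f$.

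Next I would verify the hypotheses of the Zariski main theorem for representable morphisms of algebraic stacks, exactly as in \propref{prop:U4 iso}. Representability amounts to checking that an automorphism $\varphi$ of $X\to S$ fixing $\sigma$ and descending to the identity on $\overline{X}$ is itself the identity: this follows because $\psi_\sigma$ is birational and $X$ is reduced and separated, so $\varphi$ is generically trivial, hence trivial. Birationality reduces to showing that $f$ restricts to an isomorphism between the open substack $\Ucal_6^o\subset\Ucal_6$ of families where $\psi_\sigma$ is a closed embedding and $\FF(2,3,4)\sm$, which is immediate. For quasi-finiteness and surjectivity, given a $(2,3)$-complete intersection $\overline{X}\subset\PP^4$ with at most rational double points, the minimal resolution $\widetilde{X}$ is a smooth K3 (adjunction gives $\omega_{\widetilde{X}}\simeq\Ocal$, and rationality of the singularities gives $H^1(\widetilde{X},\Ocal)=0$), and the pullback of $\Ocal_{\PP^4}(1)$ is a primitive quasi-polarization of degree $6$; uniqueness of the minimal resolution up to unique isomorphism (two birational K3's are isomorphic) yields both the fibre-wise uniqueness and the separatedness of $f$.

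The main obstacle is the classical input identifying the projective model: proving that whenever $\psi_\sigma$ is birational the image $\overline{X}$ is cut out by exactly one quadric and one cubic (not, for instance, by higher-degree hypersurfaces or more generators). This is where one must invoke Saint-Donat's analysis of linear systems on K3's of low degree, together with the fact that $h^0(X,L)=5$, $h^0(X,L^{\otimes 2})$ and $h^0(X,L^{\otimes 3})$ (computed via \lmref{lm:hodge for K3}) match the dimensions expected from a $(2,3)$-complete intersection, so that one quadric and one cubic suffice to cut $\overline{X}$ set-theoretically, and scheme-theoretically by a Koszul/Hilbert polynomial comparison. Once this is established the rest of the argument is formal, and combining representability, quasi-finiteness, surjectivity, birationality and separatedness via the stacky Zariski main theorem yields that $f$ is an isomorphism.
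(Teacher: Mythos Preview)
Your proposal is correct and follows essentially the same approach as the paper: construct the morphism $f\colon\Ucal_6\to\FF(2,3,4)\rat$ via the projective model (invoking Saint-Donat for the identification of the image as a $(2,3)$-complete intersection with ADE singularities), then verify representability, birationality, quasi-finiteness, surjectivity and separatedness in order to apply the Zariski main theorem, just as in \propref{prop:U4 iso}. The paper's proof is in fact only a sketch pointing back to \propref{prop:U4 iso}, and your write-up fills in exactly the details the paper omits.
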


\begin{proof}
    The proof is similar to the one of \propref{prop:U4 iso}, so we will be skipping some of the details.
    The image of a family of polarized K3 surfaces of degree $6$, whose polarization induces a birational morphism, is equal to a complete intersection of bidegree $(2,3)$ in a $\PP^4$-bundle. Moreover, this image will have at most ADE singularities.
    
    This fact determines a morphism $f:\Ucal_6\to\FF(2,3,4)\rat$, which we claim to be an isomorphism. For this, we construct the inverse map explicitly: this is given by taking an object $(X\subset P\to S)$ of $\FF(2,3,4)\rat(S)$ and, up to passing to an \'etale cover of $S$ so to have a line bundle $\Ocal(1)$ on $P$, sending this object to $(X\to S, \Ocal(1)|_X)$, which belongs to $\Ucal_6(S)$.

    
    
    
\end{proof}

\mypar \label{par:Pic(U6)} The complement of the substack $\FF(2,3,4)\rat$ inside $\FF(2,3,4)$ has codimension $>1$: indeed, let $U\rat$ be the preimage of $\FF(2,3,4)\rat$ along the $\PGL_{5}$-torsor $(\PP(V_{2,3})\setminus Z)\to \FF(2,3,4)$. By definition, its complement is strictly contained in the closed subscheme of singular complete intersections, which is an irreducible divisor, as observed at the end of the proof of \ref{prop:FF is a quotient}.

We deduce that $\Pic(\FF(2,3,4)\rat)\simeq\Pic(\FF(2,3,4))$. \propref{prop:Pic FF} implies that the latter is isomorphic to the sublattice of $\Pic(\PP(V_{2,3}))\simeq \ZZ^{\oplus 2}$ below:
\[ \Pic(\FF(2,3,4))\simeq \ZZ\cdot[\pi^*\Ocal_{\PP(W_2)}(5)]\oplus \ZZ\cdot [\Ocal_{\PP(V_{2,3})}(1)\otimes\pi^*\Ocal_{\PP(W_2)}(1)] \]
\corref{cor:Pic FF 2} allows us to give an explicit description of the generators of $\Pic(\Ucal_6)$: the first one is equal to the line bundle $\Phi_{0,5}$ (we are using the notation introduced in \parref{par:hodge for FF}) and the second one is just the Hodge line bundle restricted to $\Ucal_6$.

We are ready to state and prove the main result of this subsection.

\begin{thm}\label{thm:Pic(K6)}
We have:
\begin{enumerate}
    \item $\Pic(\Ucal_6)=\ZZ\cdot [\Dcal_{3,1}|_{\Ucal_6}]\oplus \ZZ\cdot \lambda_1|_{\Ucal_6}$.
    \item $\Pic(\Mcal_6)\simeq \ZZ\cdot [\Dcal_{1,1}] \oplus \ZZ\cdot [\Dcal_{2,1}] \oplus \ZZ\cdot [\Dcal_{3,1}] \oplus \ZZ\cdot \lambda_1 $.
\end{enumerate}
\end{thm}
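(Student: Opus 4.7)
The plan for part (1) is as follows. By \parref{par:Pic(U6)} we already have the identification
\[ \Pic(\Ucal_6) \simeq \ZZ \cdot \Phi_{0,5} \oplus \ZZ \cdot \lambda_1|_{\Ucal_6},\]
so the task reduces to showing that $[\Dcal_{3,1}|_{\Ucal_6}] = \Phi_{0,5}$, possibly after twisting by a multiple of $\lambda_1|_{\Ucal_6}$. I would give a geometric description of $\Dcal_{3,1}|_{\Ucal_6}$ via \propref{prop:U6 iso}: for $(X,\sigma) \in \Ucal_6$, the image $\psi_\sigma(X) \subset \PP^4$ lies on a unique quadric $Q$. If $Q$ has rank $4$, i.e.\ is a cone over a smooth quadric surface, then the pencil of rulings cuts the defining cubic in plane cubic curves pulling back to an elliptic pencil on $X$; this produces a class $\beta$ with $\beta^2 = 0$ and $\beta \cdot \sigma = 3$, so $(X,\sigma) \in \Dcal_{3,1}$. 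This shows $\Dcal_{3,1}|_{\Ucal_6} \subset \pi^{-1}(\Delta)$, where $\Delta \subset \PP(W_2)$ is the discriminant hypersurface; since both sides are irreducible divisors (the former by definition of a Noether--Lefschetz divisor, the latter because $\pi$ is a projective bundle over the irreducible $\Delta$), the two loci must coincide.

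Next I would compute the class. The discriminant $\Delta \subset \PP(W_2) = \PP^{14}$ is the vanishing locus of the determinant of the generic symmetric $5 \times 5$ matrix, hence an irreducible reduced hypersurface of degree $5$. Combining \propref{prop:Pic FF} with \corref{cor:Pic FF 2}, the class of $\pi^{-1}(\Delta)$ in $\Pic^{\PGL_5}(\PP(V_{2,3}))$ is exactly $[\pi^*\Ocal_{\PP(W_2)}(5)] = \Phi_{0,5}$, which yields (1).

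For part (2), the natural route is the excision exact sequence recalled after \parref{par:NL}:
\[ \ZZ \cdot [\Dcal_{1,1}] \oplus \ZZ \cdot [\Dcal_{2,1}] \longrightarrow \Pic(\Kcal_6) \longrightarrow \Pic(\Ucal_6) \longrightarrow 0. \]
Part (1) provides $\Pic(\Ucal_6) \simeq \ZZ^{\oplus 2}$, giving the upper bound $\mathrm{rk}\,\Pic(\Kcal_6) \leq 4$. To force the left arrow to be injective, and hence to promote the sequence to a split short exact sequence, I would invoke a Bruinier-type rank estimate (analogous to the citation of \cite{Bru}*{(6)} used in the proof of \thmref{thm:Pic K4}) asserting that $\mathrm{rk}\, \Pic(\Kcal_6)_\Q \geq 4$.

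The main obstacle is the reducedness statement implicit in part (1): knowing only the set-theoretic equality $\Dcal_{3,1}|_{\Ucal_6} = \pi^{-1}(\Delta)$ would yield $[\Dcal_{3,1}|_{\Ucal_6}] = c \cdot \Phi_{0,5}$ for some positive integer $c$, and pinning down $c = 1$ requires a transversality or first-order computation confirming that the universal family of complete intersections meets the discriminant locus with multiplicity one at a general point. Once this is secured, the rest of the argument is largely formal.
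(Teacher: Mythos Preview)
Your approach is essentially the same as the paper's: identify $\Dcal_{3,1}|_{\Ucal_6}$ with the preimage of the discriminant $\Delta\subset\PP(W_2)$ of singular quadrics, use $\deg\Delta=5$ to match it with the generator $\Phi_{0,5}=[\pi^*\Ocal_{\PP(W_2)}(5)]$, and then run excision together with Bruinier's rank bound for part~(2).

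Two small comments. First, the direction of the inclusion in your geometric argument is stated backwards: showing that a rank-$4$ quadric produces an elliptic pencil of degree $3$ establishes $\pi^{-1}(\Delta)\subset\Dcal_{3,1}|_{\Ucal_6}$ (at least on the dense rank-$4$ locus), not the inclusion you wrote. The paper argues the opposite inclusion instead (a planar genus-$1$ cubic forces the quadric to contain a $2$-plane, hence to be singular). Either direction suffices once you invoke irreducibility of both divisors, so your logic survives the slip.

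Second, the multiplicity obstacle you flag is not genuine. The Noether--Lefschetz divisor $\Dcal_{3,1}$ is by definition taken with its reduced structure, and $\Delta\subset\PP(W_2)$ is integral because the determinant of the generic symmetric $5\times 5$ matrix is an irreducible polynomial; since $\pi$ is smooth, $\pi^{-1}(\Delta)$ is reduced as well. Two reduced irreducible divisors with the same support are equal as Cartier divisors, so $c=1$ comes for free and no transversality computation is needed. The paper does not discuss this point either, for the same reason.
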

\begin{proof}
    \propref{prop:proj model} tells us that $\Ucal_6=\Mcal_6\setminus (\Dcal_{1,1}\cup\Dcal_{2,1})$.
    
    By \propref{prop:U6 iso}, we know that $\Pic(\Ucal_6)\simeq \Pic^{\PGL_5}(\PP(V_{2,3})$. The latter had been computed in \parref{par:Pic(U6)}
    
    Observe that the divisor $\Delta$ of singular quadrics in $\PP(W_2)$ has degree $5$: this follows from the formula for the degree of the resultant. Its preimage in $\PP(V_{2,3})$ is then a generator of the $\PGL_5$-equivariant Picard group of $\PP(V_{2,3})$.
    
    The isomorphism of \propref{prop:U6 iso} induces an isomorphism between $\Dcal_{3,1}|_{\Ucal_4}$ and complete intersections of singular quadrics and cubics.
    
    This can be seen as follows: let $\overline{X}\subset \PP^4$ be a complete intersection of a quadric and a cubic containing a curve $C$ of genus $1$ and degree $3$. A simple computation with Riemann-Roch formula shows that this curve must be planar, i.e. is contained in some $\PP^2\subset\PP^4$.
    
    The intersection $\PP^2\cap \overline{X}$ is a conic, unless the complete intersection contains a plane, which should then be the case.
    
    Therefore, we can assume that the degree $2$ homogeneous form defining the quadric is contained in the homogeneous ideal generated by $X_3$ and $X_4$. It is pretty straightforward to check that the determinant of any such quadratic form is zero.
    
    We deduce the following exact sequence:
    \[ \oplus_{i=1}^{3} \ZZ\cdot [\Dcal_{i,1}] \to \Pic(\Mcal_6)\to \ZZ\cdot\lambda_1 \cdot 0 \]
    The arrow on the left must be injective, otherwise the rank of $\Pic(\Mcal_6)$ would be $<4$, and we know from \cite{Bru}*{(6)} that the sublattice of Noether-Lefschetz divisors has rank $\geq 4$.
    
    This implies the theorem.
\end{proof}


\subsection{Degree eight case}
 
\mypar Let $(X\to S,\sigma)$ be family of polarized K3 surfaces with at most rational double points of degree $8$. Let $\psi_{\sigma}:X\to P$ be the morphism induced by the polarization $\sigma$: the fibres of $P$ over $S$ are hence isomorphic to $\PP^5$. 

Let us define $\Ucal_8$ as the open substack inside $\Mcal_8$ of families such that the morphism $\psi_{\sigma}:X\to P$ is birational: \propref{prop:proj model} implies that $\Ucal_8=\Mcal_8\setminus \left(\Dcal_{1,1}\cup \Dcal_{2,1}\right)$.

\begin{prop}\label{prop:U8 iso}
    The stack $\Ucal_8\setminus \Dcal_{3,1}$ is isomorphic to $\GG(2,3,5)\rat$.
\end{prop}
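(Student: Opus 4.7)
The plan is to mimic the proof strategy used in \propref{prop:U4 iso} and \propref{prop:U6 iso}, constructing a morphism $f : \Ucal_8 \setminus \Dcal_{3,1} \to \GG(2,3,5)\rat$ by sending a quasi-polarized family $(\pi : X \to S, \sigma)$ to its projective model $\overline{X} \subset \PP(\pi_* L^\vee) \to S$, and then verifying the hypotheses of the Zariski main theorem for representable morphisms of algebraic stacks. By \lmref{lm:hodge for K3} the sheaf $\pi_* L$ has rank $10 = 2 + 8$, so $\PP(\pi_* L^\vee)$ is a Brauer-Severi variety with $5$-dimensional fibres, which matches the ambient space for $\GG(2,3,5)$.

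First I would check that for $(X, \sigma)$ in $\Ucal_8 \setminus \Dcal_{3,1}$ the image $\overline{X}$ is indeed a complete intersection of three quadrics with at most rational double points. By Saint-Donat (\cite{SD}*{Thm.~7.2}), for a degree $8$ K3 surface whose quasi-polarization is birational, the projective model is cut out by quadrics unless $(X,\sigma)$ lies in $\Dcal_{3,1}$ (the locus containing an elliptic curve $E$ with $(E \cdot L) = 3$, on which the image lies on a scroll), and outside this locus the ideal is generated by exactly three quadrics whose intersection is the complete intersection of the expected dimension. Rationality of the singularities follows, as in the earlier cases, from the fact that the contracted curves are chains of $(-2)$-curves. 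This yields the morphism $f$.

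Next I would verify the four properties. Representability is proved exactly as in \propref{prop:U4 iso}: an automorphism of the family $X \to S$ preserving $\sigma$ descends to an automorphism of $\PP(\pi_*L^\vee)$ restricting to an automorphism of $\overline{X}$; if this descended automorphism is the identity, then the original one is generically the identity on $X$ (via $\psi_\sigma$) and hence everywhere, since $X$ is reduced and separated. Birationality follows from the fact that the open substack $\Ucal_8^o \subset \Ucal_8 \setminus \Dcal_{3,1}$ of very ample polarizations maps isomorphically onto $\GG(2,3,5)\sm$, both being dense open substacks. Surjectivity and quasi-finiteness (together with separatedness) follow from the minimal resolution construction: a complete intersection of three quadrics in $\PP^5$ with at most rational double points has trivial dualizing sheaf (adjunction formula, $2+2+2 = 6 = n+1$) and vanishing $H^1$ of the structure sheaf, so its minimal resolution is a K3 surface, unique up to isomorphism because birational K3 surfaces are isomorphic.

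The main subtle step is the first one, namely showing that removing \emph{precisely} $\Dcal_{3,1}$ from $\Ucal_8$ cuts out the locus where the projective model is a complete intersection of three quadrics, and conversely that every such projective model lifts to a K3 outside $\Dcal_{3,1}$. This is where the structural theorems of Saint-Donat are indispensable; once we know the image scheme-theoretically equals the intersection of the quadrics in $H^0(X,L \otimes \Ical_X)$ and that this intersection has the right dimension, the rest of the argument proceeds routinely. Finally, to check that $f$ really lands in $\GG(2,3,5)\rat$ rather than in the larger stack $\GG(2,3,5)$, one uses flatness of $\pi$ together with the constancy of Hilbert polynomials to ensure that the family of quadrics cutting out $\overline{X}$ varies in a flat way over $S$, identifying $\overline{X}$ with an object of $\GG(2,3,5)\rat$; this is a direct analogue of the last step in the proof of \propref{prop:U4 iso}.
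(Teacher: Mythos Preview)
Your proposal is correct and follows essentially the same approach as the paper: construct the projective-model morphism via Saint-Donat's \cite{SD}*{Thm.~7.2}, then verify the hypotheses of the Zariski main theorem (representability, birationality, quasi-finiteness, surjectivity) exactly as in \propref{prop:U4 iso}. One arithmetic slip to fix: you write that $\pi_* L$ has rank $10 = 2 + 8$, but for degree $2l = 8$ we have $l = 4$, so by \lmref{lm:hodge for K3} the rank is $l + 2 = 6$; your conclusion that $\PP(\pi_* L^\vee)$ has $5$-dimensional fibres is nonetheless correct.
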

\begin{proof}
    The proof is similar to the one of \propref{prop:U4 iso}: if $(\pi:X\to S,\sigma)$ is a family of polarized K3 surfaces with at most rational double points such that $\psi_{\sigma}:X\to P$ is birational, the image $\psi_{\sigma}(X)$ will be a family of complete intersection of three quadrics with at most rational double points, unless there exists a curve $C\subset X$ of genus $1$ and degree $3$, where the degree is computed with respect to $\sigma$. For a proof of this, see \cite{SD}*{Thm. 7.2}.
    
    Henceforth, we obtain a morphism $f:\Ucal_8\setminus\Dcal_{3,1}\to \GG(2,3,5)\rat$. On the other hand, given an object $(X\subset P\to S)$, up to passing to an \'etale cover of $S$, we have that $P\simeq \PP(E)$ for a vector bundle $E\to S$, hence we can take the pair $(X\to S,\Ocal(1)|_X)$, which is by construction an object of $\Ucal_8\setminus \Dcal_{3,1}$. The morphism of stacks $\GG(2,3,5)\rat \to \Ucal_8\setminus\Dcal_{3,1}$ defined in this way gives an inverse to $f$.
    
        
\end{proof}

\begin{cor}
    The stack $\GG(2,3,5)\rat$ is smooth, open in $\GG(2,3,5)$ and its complement has codimension $>1$.
\end{cor}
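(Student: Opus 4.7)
The plan is to mirror the strategy used in the proof of \corref{cor:U4 is a quot}, transporting smoothness from the K3 side via the isomorphism provided by \propref{prop:U8 iso} and then arguing about codimensions directly on the stack of complete intersections.

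First, I would invoke \propref{prop:U8 iso}, which identifies $\GG(2,3,5)\rat$ with the open substack $\Ucal_8\setminus\Dcal_{3,1}\subset \Kcal_8$. Since $\Kcal_8$ is a smooth Deligne-Mumford stack by \cite{Ols04}*{Thm.~6.2}, its open substack $\Ucal_8\setminus\Dcal_{3,1}$ is smooth, and hence so is $\GG(2,3,5)\rat$.

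Next, to see that $\GG(2,3,5)\rat$ is an open substack of $\GG(2,3,5)$, I would note that both stacks have the same dimension: $\GG(2,3,5)\rat$ is a nonempty smooth substack of the same dimension as $\GG(2,3,5)$ (they share a dense open locus, namely the stack $\GG(2,3,5)\sm$ of smooth complete intersections), and smoothness of $\GG(2,3,5)\rat$ combined with equality of dimensions forces it to be an open substack; alternatively one can appeal to the fact that having at worst ADE singularities is a deformation-open condition in families of surfaces.

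Finally, for the codimension statement, I would argue exactly as in \corref{cor:U4 is a quot}. The complement of $\GG(2,3,5)\rat$ inside $\GG(2,3,5)$ is strictly contained in the closed substack $D$ parametrising singular complete intersections: indeed, a complete intersection with only ADE singularities is in particular singular along a $0$-dimensional locus, whereas generic members of $D$ are exactly of this type. In the proof of \propref{prop:GG is a quotient} it is shown that (the preimage in $\Gr_{3}(W_{2})$ of) $D$ is an irreducible divisor. Hence the complement of $\GG(2,3,5)\rat$ inside $\GG(2,3,5)$ is a proper closed subset of an irreducible divisor, and so has codimension strictly greater than $1$.

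The main obstacle, if any, is ensuring that the statement about $D$ being an irreducible divisor from the proof of \propref{prop:GG is a quotient} applies verbatim with the parameters $(d,m,n)=(2,3,5)$; this should be unproblematic since the argument there only uses $0<m<n$ and the Jacobian criterion in a way that is independent of the specific values. Once this is granted, the rest of the proof is purely formal.
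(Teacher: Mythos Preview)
Your proposal is correct and follows essentially the same approach as the paper: smoothness via \propref{prop:U8 iso} and \cite{Ols04}*{Thm.~6.2}, openness via equality of dimensions, and the codimension bound via strict containment in the irreducible divisor of singular complete intersections from the proof of \propref{prop:GG is a quotient}. The paper's argument is simply a terser version of yours.
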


\begin{proof}
    By \propref{prop:U8 iso} we know that $\GG(2,3,5)\rat$ is smooth and has the dimension of $\GG(2,3,5)$, which is irreducible: we deduce that $\GG(2,3,5)\rat$ is open in $\GG(2,3,5)$.
    
    Its complement is strictly contained in the irreducible divisor (see the proof of \propref{prop:GG is a quotient}) of singular complete intersection, hence must have codimension $>1$.
\end{proof}

\mypar \label{par:Phi8} Let $E$ be the standard $\GL_6$-representation, and define $W_2:=\Sym^2 E^{\vee}$. We know from \propref{prop:Pic GG} that $\Pic(\GG(2,3,5)$ is isomorphic to the $\PGL_6$-equivariant Picard group of $\Gr_3(W_2)$.

This group is generated by the line bundle $\det(\Tcal)$ with the $\PGL_6$-linearization given by the formula:
\[ A\cdot q_1\wedge q_2\wedge q_3:=\det(A) q_1(A^{-1}\underline{x})\wedge q_2(A^{-1}\underline{x}) \wedge q_3(A^{-1}\underline{x}) \]
The quotient of $\det(\Tcal)$ by $\PGL_6$ gives a generator for $\Pic(\GG(2,3,5)$.

\propref{prop:U8 iso} implies that $\Pic(\Ucal_8\setminus\Dcal_{3,1})\simeq \Pic(\GG(2,3,5)$, which we computed in \propref{prop:Pic GG}.

By \corref{cor:Pic GG 2}, we see that a generator for $\Pic(\Ucal_8\setminus \Dcal_{3,1})$ is given by restriction of the Hodge line bundle.

We are ready to prove the main result of the subsection.

\begin{thm}\label{thm:Pic K8}
    We have:
    \begin{enumerate}
        \item $\Pic(\Ucal_8\setminus \Dcal_{3,1})\simeq \ZZ\cdot\lambda_1|_{\Ucal_8\setminus\Dcal_{3,1}}$ .
        \item $\Pic(\Mcal_8)\simeq \ZZ\cdot [\Dcal_{1,1}]\oplus\ZZ\cdot [\Dcal_{2,1}]\oplus \ZZ\cdot [\Dcal_{3,1}]\oplus \ZZ\cdot \lambda_1$.
    \end{enumerate}
\end{thm}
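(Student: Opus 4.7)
Part (1) has essentially been done already in \parref{par:Phi8}: by \propref{prop:U8 iso} we have an isomorphism $\Ucal_8\setminus\Dcal_{3,1}\simeq\GG(2,3,5)\rat$, and by the preceding Corollary the complement of $\GG(2,3,5)\rat$ in $\GG(2,3,5)$ has codimension $>1$, so $\Pic(\Ucal_8\setminus\Dcal_{3,1})\simeq\Pic(\GG(2,3,5))$. \propref{prop:Pic GG} together with \corref{cor:Pic GG 2} identifies the latter as free of rank one, generated by the restriction of the Hodge line bundle. So the plan for (1) is simply to assemble these references.

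For part (2) I would use the standard excision sequence for the Picard group of a smooth Deligne--Mumford stack. Since $\Kcal_8$ is smooth (\cite{Ols04}*{Thm.~6.2}), and by \propref{prop:proj model} we have
\[ \Ucal_8\setminus \Dcal_{3,1}\;=\;\Kcal_8\setminus\bigl(\Dcal_{1,1}\cup\Dcal_{2,1}\cup\Dcal_{3,1}\bigr), \]
the excision sequence reads
\[ \ZZ\cdot[\Dcal_{1,1}]\oplus\ZZ\cdot[\Dcal_{2,1}]\oplus\ZZ\cdot[\Dcal_{3,1}]\longrightarrow \Pic(\Kcal_8)\longrightarrow \Pic(\Ucal_8\setminus\Dcal_{3,1})\longrightarrow 0. \]
By part (1) the rightmost term is free of rank one, generated by $\lambda_1|_{\Ucal_8\setminus\Dcal_{3,1}}$, and this is the image of the global Hodge class $\lambda_1\in\Pic(\Kcal_8)$, so the sequence splits on the right.

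The only remaining thing is to prove that the left arrow is injective, and this is the step that requires input from elsewhere. Following exactly the pattern of the proofs of \thmref{thm:Pic K4} and \thmref{thm:Pic(K6)}, I would invoke \cite{Bru}*{(6)}, which gives that the rank of the sublattice of $\Pic_{\Q}(\Kcal_8)$ spanned by Noether--Lefschetz divisors is $\geq 4$. Since the exact sequence implies $\operatorname{rk}\Pic(\Kcal_8)\leq 3+\operatorname{rk}\Pic(\Ucal_8\setminus\Dcal_{3,1})=4$, the three classes $[\Dcal_{1,1}],[\Dcal_{2,1}],[\Dcal_{3,1}]$ together with $\lambda_1$ must be $\Q$-linearly independent; in particular the left map in the sequence is injective. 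This yields $\Pic(\Kcal_8)\simeq\ZZ\cdot[\Dcal_{1,1}]\oplus\ZZ\cdot[\Dcal_{2,1}]\oplus\ZZ\cdot[\Dcal_{3,1}]\oplus\ZZ\cdot\lambda_1$.

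The main obstacle, as in the two previous theorems, is purely the injectivity of the boundary map: without the lower bound on the rank coming from \cite{Bru}, one would have to verify by hand that none of the three divisors $\Dcal_{i,1}$ is a $\ZZ$-linear combination of the others together with $\lambda_1$ on $\Kcal_8$. Once that rank bound is accepted, everything else is just assembling \propref{prop:U8 iso}, \corref{cor:Pic GG 2} and the excision sequence.
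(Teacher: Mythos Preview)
Your proposal is correct and follows essentially the same argument as the paper: part~(1) is the content of \parref{par:Phi8}, and part~(2) is obtained from the excision sequence for $\Dcal_{1,1}\cup\Dcal_{2,1}\cup\Dcal_{3,1}\subset\Kcal_8$, with injectivity of the left map forced by the rank bound from \cite{Bru}*{(6)}. The only cosmetic difference is that you spell out the splitting and the rank comparison more explicitly than the paper does.
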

\begin{proof}
    We proved point (1) in \parref{par:Phi8}. The localization exact sequence applied to $(\cup_{i=1}^3 D_{i,1})\subset \Mcal_8$ gives:
    \[ \oplus_{i=1}^3 \ZZ\cdot [\Dcal_{i,1}]\to \Pic(\Mcal_8)\to\ZZ\cdot\lambda_1|_{\Ucal_8\setminus\Dcal_{3,1}}\to 0\]
    The arrow on the left must be injective, otherwise we would have ${\rm rk}(\Pic (\Mcal_8))<4$, and the rank of the sublattice of $\Pic(\Mcal_8)$ spanned by Noether-Lefschetz divisors is known to be equal to $4$ by \cite{Bru}*{(6)}.
    
    From this (2) easily follows.
\end{proof}


\section{Some related computations}\label{sec:computations}

In this last section we compute the cycle classes of some divisors in the moduli stack of complete intersections. These results are then interpreted in terms of Noether-Lefschetz divisors in the stack of polarized K3 surfaces with at most rational double points.

More precisely, we compute in \propref{prop:Pic(GGsm)} the class of $[\GG(d,m,n)_{\rm sing}]$, the divisor of singular complete intersections, in terms of the generator $\Phi$ (see \parref{par:hodge for GG}) of $\Pic(\GG(d,m,n))$.

We deduce in \propref{prop:D004} an expression of $[\Dcal_{0,0}|_{\Ucal_4}]$ in terms of $\lambda_1|_{\Ucal_4}$, where $\Ucal_4$ is the stack of polarized K3 surfaces of degree $4$ whose polarization induces a birational morphism.

We also deduce in \propref{prop:D008} an expression for $[\Dcal_{0,0}|_{(\Ucal_8\setminus\Dcal_{3,1})}]$.

A similar computation is also performed for the stack $\FF(a,b,n)$, and the results obtained therein are used to write the class of $[\Dcal_{0,0}|_{\Ucal_6}]$ in terms of $[\Dcal_{3,1}|_{\Ucal_6}]$ and $\lambda_1|_{\Ucal_6}$ (see \propref{prop:D006}).

Finally, in \propref{prop:D31 bis} we compute the cycle class in the Picard group of $\GG(4,1,3)$ of the divisor of quartic surfaces containing a line. From this we deduce in \propref{prop:D31} an expression of $[\Dcal_{3,1}|_{\Ucal_4}]$ in terms of the generator $\lambda_1|_{\Ucal_4}$ of $\Pic(\Ucal_4)$.

\subsection{The divisor of singular complete intersections in $\GG(d,m,n)$}

\mypar Recall from \parref{par:GG} that $\GG(d,m,n)$ is the stack of complete intersections of $m$ hypersurfaces of degree $d$ in a Brauer-Severi variety having $n$-dimensional fibres.

\propref{prop:GG is a quotient} tells us that
\[ \GG(d,m,n)\simeq [(\Gras\setminus Z)\setminus \PGL_{n+1}] \]
where $W_d=\Sym^d E^{\vee}$, the $d$-th symmetric power of the dual of the standard $\GL_{n+1}$-representation, and $Z$ is a $\PGL_{n+1}$-invariant, closed subscheme of codimension $>1$.

Let $\GG(d,m,n)_{\rm sing}$ be the closed substack of singular complete intersections. The isomorphism of \propref{prop:GG is a quotient} sends $\GG(d,m,n)$ to the $\PGL_{n+1}$-quotient of the divisor $\Gras_{\rm sing}$ of singular complete intersections embedded in $\PP^n$.

The expression of $[\Gras_{\rm sing}]$ in terms of $\det(\Tcal)^{\vee}$ is given by the following Proposition.

\begin{prop}\label{prop:formula Gras^sing}
    We have:
    \[ [\Gras_{\rm sing}]=\left(\sum_{i=0}^{n-m+1}  (-1)^i\binom{n+1}{i}\binom{n+1-i}{m}d^{n-i}\right)\cdot [\det(\Tcal)^{\vee}]\]
\end{prop}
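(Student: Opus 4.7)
The plan is to realize $\Gras_{\rm sing}$ as the proper push-forward of the singular locus $X^{\rm sing}$ of the universal complete intersection $\pi\colon X\subset \PP^n\times \Gras\to \Gras$, and to compute $[X^{\rm sing}]$ via the Thom-Porteous formula.

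The jet-theoretic setup is the key. On $\PP^n$ we have the first jet bundle sequence
\[
0\longrightarrow \Omega^1_{\PP^n}(d)\longrightarrow J^1(\Ocal(d))\longrightarrow \Ocal(d)\longrightarrow 0,
\]
together with the evaluation $W_d\otimes\Ocal_{\PP^n}\twoheadrightarrow J^1(\Ocal(d))$ (surjective because $\Ocal(d)$ is $1$-jet ample for $d\ge 1$). Pulled back to $\PP^n\times\Gras$ and composed with the tautological inclusion $\pr_2^*\Tcal\hookrightarrow W_d\otimes\Ocal$, this gives a morphism $\pr_2^*\Tcal\to\pr_1^*J^1(\Ocal(d))$. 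By the Jacobian criterion, a point $(p,V)$ lies in $X^{\rm sing}$ iff the projection to $\pr_1^*\Ocal(d)$ vanishes at $(p,V)$ (equivalently, $(p,V)\in X$) \emph{and} the resulting map $\pr_2^*\Tcal\to\pr_1^*\Omega^1_{\PP^n}(d)|_X$ has rank $\le m-1$. Since $X$ is smooth (the first projection $X\to\PP^n$ realizes it as a Grassmannian bundle over $\PP^n$), Thom-Porteous applies to this rank-drop condition on a map of bundles of ranks $(m,n)$; the expected codimension $n-m+1$ equals the actual codimension of $X^{\rm sing}$ in $X$ determined in the proof of \propref{prop:GG is a quotient}, and, since the corank is exactly one, the determinantal formula collapses to a single Chern class:
\[
[X^{\rm sing}]\;=\;c_{n-m+1}\bigl(\Omega^1_{\PP^n}(d)-\Tcal\bigr)\qquad\text{in}\qquad A^*(X).
\]

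Since this class is the restriction of a class on $\PP^n\times\Gras$, the projection formula for $\iota\colon X\hookrightarrow\PP^n\times\Gras$ yields $\iota_*[X^{\rm sing}]=c_{n-m+1}(\Omega^1_{\PP^n}(d)-\Tcal)\cdot [X]$, where $[X]=c_m(\Tcal^\vee\otimes\Ocal(d))$. Assuming $X^{\rm sing}\to\Gras_{\rm sing}$ is birational, so that the push-forward has degree one, we obtain
\[
[\Gras_{\rm sing}]\;=\;(\pr_2)_*\!\Bigl(c_{n-m+1}(\Omega^1_{\PP^n}(d)-\Tcal)\cdot c_m(\Tcal^\vee\otimes\Ocal(d))\Bigr).
\]
Writing $h=c_1(\Ocal_{\PP^n}(1))$ and $\xi=c_1(\Tcal^\vee)$, so that $A^1(\Gras)=\ZZ\cdot\xi=\ZZ\cdot[\det(\Tcal)^\vee]$, the push-forward extracts the coefficient of $\xi h^n$. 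The Euler sequence on $\PP^n$ gives $c_k(\Omega^1_{\PP^n}(d))=\gamma_k h^k$ with $\gamma_k=\sum_j(-1)^j\binom{n+1}{j}\binom{n-j}{k-j}d^{k-j}$, while the twist formula yields $c_m(\Tcal^\vee\otimes\Ocal(d))=\sum_i c_i(\Tcal^\vee)d^{m-i}h^{m-i}$. Exactly two cross-terms contribute to the $\xi h^n$-coefficient, and combining them via Pascal's rule $\binom{n-j}{m-1}+\binom{n-j}{m}=\binom{n+1-j}{m}$ reproduces the sum in the statement.

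The main obstacle is justifying the birationality of $X^{\rm sing}\to\Gras_{\rm sing}$, needed to pin down the push-forward multiplicity to $1$: this requires a Bertini/transversality argument showing that a generic singular complete intersection of $m$ hypersurfaces of degree $d$ in $\PP^n$ has a single ordinary double point and no worse singularity. Everything else is a mechanical Chern-class computation in $A^*(\PP^n\times\Gras)$.
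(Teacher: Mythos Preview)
Your proof is correct and complete. The paper itself does not give a self-contained argument: it simply cites \cite{Dil18}*{Cor.~2.2.9} and remarks that one must set the equivariant parameter $c_1=0$ in the formula proved there. Your jet-bundle/Thom--Porteous approach is exactly the standard route to such discriminant degrees (and is in all likelihood what underlies the cited result), so there is no methodological divergence to speak of---you have simply written out what the paper leaves to a reference. The algebra checks: your $\gamma_k$ is the correct twist formula $c_k(\Omega^1_{\PP^n}(d))=\sum_j(-1)^j\binom{n+1}{j}\binom{n-j}{k-j}d^{k-j}h^k$, and the Pascal combination of the two cross-terms $\gamma_{n-m+1}d^{m-1}+\gamma_{n-m}d^m$ does reproduce the stated sum; specializing to $(d,m,n)=(4,1,3)$ and $(2,3,5)$ gives $108$ and $80$, matching Propositions~\ref{prop:D004} and~\ref{prop:D008}.

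The one point you flag---birationality of $X^{\rm sing}\to\Gras_{\rm sing}$---is indeed the only place where genuine input beyond formal Chern-class algebra is needed. It is handled by the standard argument that a generic singular complete intersection has a single ordinary double point (e.g.\ via an incidence-variety dimension count over $\PP^n$, or by noting that the Porteous scheme $X^{\rm sing}$ is irreducible of the expected dimension and the locus of worse-than-nodal fibres is a proper closed subset of $\Gras_{\rm sing}$). Since you already identify this as the remaining obligation, there is no gap.
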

\begin{proof}
    The proof is basically the same as the proof of \cite{Dil18}*{Cor. 2.2.9}, with the only difference that one has to put $c_1=0$ in the formula contained therein.
\end{proof}

Let $k:=mcm(n+1,md)/md$. Then by \corref{cor:Pic GG 2} we know that the Picard group of $\GG(d,m,n)$ is freely generated by the line bundle $\Phi$, which is equal to the $\PGL_{n+1}$-quotient of the line bundle $[\det(\Tcal)^{\otimes (-k)}]$ defined over $\Gras$.

Let $\GG(d,m,n)\sm$ be the substack of smooth complete intersections.

\begin{prop} \label{prop:Pic(GGsm)}
Let $k:=mcm(n+1,md)/md$. Then $\Pic(\GG(d,m,n)\sm)$ is generated by $\Phi$ and it is cyclic of order
    \[\frac{1}{k}\left(\sum_{i=0}^{n-m+1} (-1)^i\binom{n+1}{i}\binom{n+1-i}{m}d^{n-i}\right)\]
\end{prop}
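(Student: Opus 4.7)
The plan is to apply the excision exact sequence to the open immersion $\GG(d,m,n)\sm\hookrightarrow \GG(d,m,n)$. The complement $\GG(d,m,n)_{\rm sing}$ is an irreducible divisor, as established at the end of the proof of \propref{prop:GG is a quotient}, and $\GG(d,m,n)$ is a smooth Deligne--Mumford stack, being an open substack of the smooth quotient $[(\Gras\setminus Z)/\PGL_{n+1}]$. I therefore obtain
\[
\ZZ\cdot [\GG(d,m,n)_{\rm sing}]\longrightarrow \Pic(\GG(d,m,n))\longrightarrow \Pic(\GG(d,m,n)\sm)\longrightarrow 0.
\]
By \corref{cor:Pic GG 2} the middle group is infinite cyclic, generated by $\Phi$, so $\Pic(\GG(d,m,n)\sm)$ is cyclic, generated by the image of $\Phi$, and its order equals $|N|$, where $N$ is the integer determined by $[\GG(d,m,n)_{\rm sing}]=N\cdot\Phi$.

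To compute $N$, I would pull back along the $\PGL_{n+1}$-torsor $f\colon\Gras\setminus Z\to \GG(d,m,n)$. Since $Z$ has codimension greater than one in $\Gras$, restriction gives $\Pic(\Gras\setminus Z)\simeq\Pic(\Gras)\simeq\ZZ\cdot[\det(\Tcal)^{\vee}]$. By the projection-formula computation carried out in the proof of \corref{cor:Pic GG 2}, the class $f^*\Phi$ corresponds to $k$ times the generator $[\det(\Tcal)^{\vee}]$, while \propref{prop:formula Gras^sing} identifies $f^*[\GG(d,m,n)_{\rm sing}]$ with
\[
\left(\sum_{i=0}^{n-m+1}(-1)^i\binom{n+1}{i}\binom{n+1-i}{m}d^{n-i}\right)\cdot [\det(\Tcal)^{\vee}].
\]
Since the forgetful map $\Pic^{\PGL_{n+1}}(\Gras\setminus Z)\to \Pic(\Gras\setminus Z)$ is injective (no non-trivial characters on $\PGL_{n+1}$), comparing the two coefficients gives the stated value for $N$. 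The integrality of the resulting fraction is automatic: $f^*[\GG(d,m,n)_{\rm sing}]$ must lie in the sublattice $k\,\ZZ\cdot[\det(\Tcal)^{\vee}]$ spanned by $f^*\Phi$, so $k$ divides the sum.

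The argument is essentially formal once \propref{prop:formula Gras^sing} and \corref{cor:Pic GG 2} are in place. The only point requiring care is the accurate bookkeeping of the exponent $k$ when matching the equivariant generator $\Phi$ with the positive generator of $\Pic(\Gras)$, but this has already been done in \corref{cor:Pic GG 2}; there is no additional geometric obstacle.
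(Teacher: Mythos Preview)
Your proof is correct and follows essentially the same route as the paper: both apply the excision sequence for the irreducible divisor $\GG(d,m,n)_{\rm sing}$, identify $\Pic(\GG(d,m,n))$ with $\ZZ\cdot\Phi$ via \corref{cor:Pic GG 2}, and then read off the coefficient by pulling back to $\Gras$ and invoking \propref{prop:formula Gras^sing}. One small inaccuracy: $\GG(d,m,n)$ need not be Deligne--Mumford in general (complete intersections can have positive-dimensional automorphism groups), but it is a smooth algebraic stack, which is all that is required for the excision sequence to hold.
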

\begin{proof}
    It follows from the short exact sequence
    \[ 0\to\ZZ\cdot[\GG(d,m,n)_{\rm sing}]\to\Pic(\GG(d,m,n))\to\Pic(\GG(d,m,n)\sm)\to 0 \]
    that $\Pic(\GG(d,m,n)\sm)$ is generated by $\Phi$ and it is cyclic of order equal to the number $p$ such that $[\GG(d,m,n)_{\rm sing}]=p\cdot\Phi$.
    
    Observe that $[\Gras_{\rm sing}]_{\PGL_{n+1}}=p\cdot [\det(\Tcal)^{\otimes (-k)}]_{\PGL_{n+1}}$.
    
    The formula of \propref{prop:formula Gras^sing} allows us to compute $pk$, hence $p$.
\end{proof}

\mypar Let $\Ucal_4$ be the open substack of $\Mcal_4$ having as objects the families of polarized K3 surfaces with at most rational double points $(\pi:X\to S,\sigma)$ whose polarization induces a birational morphism.

By \propref{prop:U4 iso} we have:
\[ \Ucal_4\simeq\GG(4,1,3)\rat \]
where the stack on the right is the stack of complete intersections with at most rational double points.

Consider the Noether-Lefschetz divisor $\Dcal_{0,0}|_{\Ucal_4}$: the objects of this substack correspond to family of K3 surfaces with at least one singular point. We deduce that $\Dcal_{0,0}|_{\Ucal_4}$ is isomorphic to the restriction of the divisor $[\GG(4,1,3)_{\rm sing}]$.

\begin{prop}\label{prop:D004}
    $[\Dcal_{0,0}|_{\Ucal_4}]=108\cdot\lambda_1|_{\Ucal_4}$
\end{prop}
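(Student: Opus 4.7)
The plan is to combine the isomorphism $\Ucal_4 \simeq \GG(4,1,3)\rat$ from \propref{prop:U4 iso} with the explicit formula of \propref{prop:formula Gras^sing}, interpreting everything through the Hodge bundle description of $\Pic(\Ucal_4)$ given by \corref{cor:Pic GG 2}.

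The first step is to justify the identification of divisor classes: by the discussion immediately preceding the statement, a family in $\Dcal_{0,0}|_{\Ucal_4}$ carries an irreducible curve $C$ with $(L \cdot C) = 0$ and $(C^2) = -2$ which is contracted by $\psi_\sigma$ to an ADE singularity, and conversely any ADE singularity on the quartic model corresponds to a contracted $(-2)$-chain. Thus under the isomorphism $\Ucal_4 \simeq \GG(4,1,3)\rat$ of \propref{prop:U4 iso}, $\Dcal_{0,0}|_{\Ucal_4}$ coincides with the restriction of the divisor $\GG(4,1,3)_{\rm sing}$ to the open substack $\GG(4,1,3)\rat$. Since the complement of $\GG(4,1,3)\rat$ in $\GG(4,1,3)$ has codimension $>1$ by \corref{cor:U4 is a quot}, it suffices to compute $[\GG(4,1,3)_{\rm sing}] \in \Pic(\GG(4,1,3))$.

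Next, I would specialize \propref{prop:formula Gras^sing} to $(n,m,d)=(3,1,4)$. The sum becomes
\[
\sum_{i=0}^{3} (-1)^i \binom{4}{i}\binom{4-i}{1} 4^{3-i} = 256 - 192 + 48 - 4 = 108,
\]
so $[\Gras_{\rm sing}] = 108 \cdot [\det(\Tcal)^{\vee}]$ in the $\PGL_4$-equivariant Picard group of $\Gr_1(W_4) = \PP(W_4)$.

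Finally, I would translate this into the Hodge bundle generator. With $(n,m,d)=(3,1,4)$ we have $mcm(n+1, md) = mcm(4,4) = 4$, hence $k = 4/4 = 1$ in the notation of \propref{prop:Pic GG}. Consequently $\Phi$ is exactly the $\PGL_4$-quotient of $\det(\Tcal)^{\vee}$, and $\Phi \simeq \lambda_1|_{\Ucal_4}$ by \corref{cor:Pic GG 2} together with \parref{par:lambda4}. Combining these facts yields $[\Dcal_{0,0}|_{\Ucal_4}] = 108 \cdot \lambda_1|_{\Ucal_4}$, as claimed. No step is genuinely hard here; the only subtle point is the verification that $\Dcal_{0,0}|_{\Ucal_4}$ really is the \emph{full} (reduced) divisor of singular projective models and not some multiple thereof, but this follows from the fact that a generic quartic surface with ADE singularities has a single node, whose unique $(-2)$-curve on the minimal resolution gives a single irreducible class in $\Dcal_{0,0}$.
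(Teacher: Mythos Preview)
Your proof is correct and follows the same approach as the paper, which simply says ``A straightforward application of \propref{prop:Pic(GGsm)}''; you have merely unpacked that reference by specializing \propref{prop:formula Gras^sing} directly and tracking the identification $\Phi = \lambda_1|_{\Ucal_4}$ via \corref{cor:Pic GG 2}. Your final remark on why $\Dcal_{0,0}|_{\Ucal_4}$ equals the reduced singular locus (rather than a multiple) is a nice clarification that the paper leaves implicit.
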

\begin{proof}
    A straightforward application of \propref{prop:Pic(GGsm)}.
\end{proof}

\mypar A similar argument can be applied in the case of polarized K3 surfaces of degree $8$. More precisely, let $\Ucal_8^o=\Ucal_8\setminus\Dcal_{3,1}$ be the substack of families whose polarization induces a birational morphism with a complete intersection.

The isomorphism of \propref{prop:U8 iso} sends the restricted Noether-Lefschetz divisor $\Dcal_{0,0}|_{\Ucal_8^o}$ to an open substack of ${\rm Gr}_3(W_2)$, whose class we know from the formula contained in \propref{prop:formula Gras^sing}.

\begin{prop}\label{prop:D008}
    $[\Dcal_{0,0}|_{\Ucal_8^o}]=80\cdot\lambda_1|_{\Ucal_8^o}$
\end{prop}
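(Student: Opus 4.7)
The plan is to reuse the mechanism of \propref{prop:D004} verbatim in the setting of \propref{prop:U8 iso}. Recall that under the isomorphism $\Ucal_8\setminus\Dcal_{3,1}\simeq\GG(2,3,5)\rat$ given by \propref{prop:U8 iso}, a family of quasi-polarized K3 surfaces whose quasi-polarization induces a birational morphism is sent to its image, which is a complete intersection of three quadrics in $\PP^5$. Any K3 surface containing an irreducible curve $C$ with $(C\cdot L)=0$ and $(C^{2})=-2$ gets this curve contracted to a rational double point by the quasi-polarization (see \parref{par:def quasi-pol}), so the image acquires an ADE singularity. This identifies the restricted Noether-Lefschetz divisor $\Dcal_{0,0}|_{\Ucal_{8}^{o}}$ with the restriction of the divisor $\GG(2,3,5)_{\rm sing}$ to $\GG(2,3,5)\rat$; conversely any complete intersection of three quadrics with an ADE singularity admits a minimal resolution which is a K3 surface with an exceptional $(-2)$-curve orthogonal to the polarization.

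Next I would identify the generator $\lambda_{1}|_{\Ucal_{8}^{o}}$ of $\Pic(\Ucal_{8}^{o})$ with the distinguished generator $\Phi$ of $\Pic(\GG(2,3,5))$ described in \corref{cor:Pic GG 2}. This is exactly what was established in \parref{par:Phi8} during the proof of \thmref{thm:Pic K8}: the pullback of $\Phi$ along the $\PGL_{6}$-torsor gives $\det(\Tcal^{\vee})$, and on the K3 side it coincides with the Hodge line bundle via the formula for the dualizing sheaf of the complete intersection.

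Now I apply \propref{prop:Pic(GGsm)} with parameters $d=2$, $m=3$, $n=5$. Here $\mathrm{lcm}(n+1,md)=\mathrm{lcm}(6,6)=6$ so $k=6/(md)=1$. The order of $\Phi$ in $\Pic(\GG(2,3,5)\sm)$ equals
\[ \sum_{i=0}^{3} (-1)^{i}\binom{6}{i}\binom{6-i}{3}\,2^{5-i}=640-960+480-80=80, \]
so that $[\GG(2,3,5)_{\rm sing}]=80\cdot\Phi$ in $\Pic(\GG(2,3,5))$. Since the complement of $\GG(2,3,5)\rat$ in $\GG(2,3,5)$ has codimension $>1$, the same identity holds after restriction to $\GG(2,3,5)\rat$.

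Combining the three steps, the pullback of $[\Dcal_{0,0}|_{\Ucal_{8}^{o}}]$ under the isomorphism of \propref{prop:U8 iso} equals $80\cdot\Phi|_{\GG(2,3,5)\rat}$, which in turn corresponds to $80\cdot\lambda_{1}|_{\Ucal_{8}^{o}}$, proving the proposition. There is no real obstacle here: the only point requiring a little care is the first one, namely checking that the correspondence between $\Dcal_{0,0}$ and singular complete intersections is set-theoretically a bijection of divisors (no hidden component or multiplicity); this however follows word by word from the argument already used for $\Ucal_{4}$ in \propref{prop:D004}, via the fact that a minimal resolution of an ADE complete intersection of three quadrics is a K3 surface and this resolution is unique.
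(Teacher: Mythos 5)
Your proposal is correct and follows essentially the same route as the paper: identify $\Dcal_{0,0}|_{\Ucal_8^o}$ with the singular locus of $\GG(2,3,5)\rat$ via \propref{prop:U8 iso}, match $\lambda_1$ with the generator $\Phi$, and evaluate the formula of \propref{prop:formula Gras^sing}/\propref{prop:Pic(GGsm)} at $d=2$, $m=3$, $n=5$ (with $k=1$) to get the coefficient $80$. Your write-up is in fact more detailed than the paper's, which leaves these steps implicit.
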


\subsection{The divisor of singular complete intersections in $\FF(a,b,n)$}

\mypar Let $\PP(V_{a,b})$ be the projective bundle over $\PP(W_a)$, the projective space of degree $a$ hypersurfaces in $\PP^n$, that we introduced in \parref{par:P(Vab)}. We can interpret this scheme as a paramenter space for complete intersections in $\PP^n$ of codimension $2$ and bidegree $(a,b)$.

Let $\PP(V_{a,b})_{\rm sing}$ be the closed subscheme of singular complete intersections in $\PP^n$ of bidegree $(a,b)$. We briefly sketched in the proof of \propref{prop:FF is a quotient} that $\PP(V_{a,b})_{\rm sing}$ is an irreducible divisor.

Recall that $\Pic(\PP(V_{a,b})$ is freely generated by $\pi^*\Ocal_{\PP(W_a)}(1)$ and $\Ocal_{\PP(V_{a,b})}(1)$. The class of $\PP(V_{a,b})_{\rm sing}$ is given by the following formula.

\begin{prop}\label{prop:P(Vab)sing}
Let 
\begin{align}
    &A=\sum_{i=0}^{n-1}\sum_{k=0}^{n-1-i} (-1)^i\binom{n+1}{i}a^{n-1-i-k}b^{k} \\
    &B=\sum_{i=0}^{n-2}\sum_{k=0}^{n-2-i} (-1)^i\binom{n+1}{i}(n-1-i-k)a^{n-2-i-k}b^k \\
    &C=\sum_{i=0}^{n-2}\sum_{k=1}^{n-1-i} (-1)^i\binom{n+1}{i}ka^{n-1-i-k}b^{k-1}
\end{align}
Then:
\[ [\PP(V_{a,b})_{\rm sing}]=(ab\cdot B + b\cdot A)[\pi^*\Ocal_{\PP(W_a)}(1)]+(ab\cdot C+ a\cdot A)[\Ocal_{\PP(V_{a,b})}(1)] \]
\end{prop}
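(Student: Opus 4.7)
The strategy is an adaptation of the proof of \propref{prop:formula Gras^sing} to the projective-bundle setting $\PP(V_{a,b})\to\PP(W_a)$. Let $q_1, q_2$ denote the two projections from $\PP(V_{a,b})\times\PP^n$ to its factors, and let $X^{\rm univ}\subset\PP(V_{a,b})\times\PP^n$ be the universal complete intersection constructed in the proof of \propref{prop:FF is a quotient}. Introduce the incidence
\[ I\eqdef\{(\xi,p)\in X^{\rm univ} : X_\xi\text{ is singular at }p\}. \]
A generic singular complete intersection of bidegree $(a,b)$ has a single ordinary double point, so $q_1|_I$ is birational onto $\PP(V_{a,b})_{\rm sing}$ and hence $[\PP(V_{a,b})_{\rm sing}]=(q_1)_*[I]$.

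Recall that $X^{\rm univ}$ is cut out by a section $\sigma_a$ of the line bundle $L_a \eqdef q_1^*\pi^*\Ocal_{\PP(W_a)}(1)\otimes q_2^*\Ocal_{\PP^n}(a)$ and a section $\sigma_b$ of $L_b \eqdef q_1^*\Ocal_{\PP(V_{a,b})}(1)\otimes q_2^*\Ocal_{\PP^n}(b)$, so $[X^{\rm univ}]=c_1(L_a)\,c_1(L_b)$ in $A^*(\PP(V_{a,b})\times\PP^n)$. The relative derivatives of $\sigma_a,\sigma_b$ along the $\PP^n$-direction assemble, on $X^{\rm univ}$, into a morphism of vector bundles $\Psi\colon q_2^*T_{\PP^n}|_{X^{\rm univ}}\to (L_a\oplus L_b)|_{X^{\rm univ}}$, whose rank is $2$ at smooth points of the relative family and drops to $\leq 1$ at singular points. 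The incidence $I$ is therefore the rank-$\leq 1$ degeneracy locus of $\Psi$, and by the Thom--Porteous formula
\[ [I] \;=\; c_1(L_a)\,c_1(L_b)\,c_{n-1}\bigl(L_a\oplus L_b - q_2^*T_{\PP^n}\bigr)\qquad\text{in }A^*\bigl(\PP(V_{a,b})\times\PP^n\bigr). \]

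It remains to push this class down to $\PP(V_{a,b})$. Setting $\zeta=q_2^*c_1(\Ocal_{\PP^n}(1))$, $h=\pi^*c_1(\Ocal_{\PP(W_a)}(1))$, $H=c_1(\Ocal_{\PP(V_{a,b})}(1))$, one has $c_1(L_a)=h+a\zeta$, $c_1(L_b)=H+b\zeta$, and $c(q_2^*T_{\PP^n})=(1+\zeta)^{n+1}$ by the Euler sequence. Substituting these into the expression above turns $[I]$ into an explicit polynomial in $h,H,\zeta$ with polynomial coefficients in $a,b$, and the push-forward $(q_1)_*$ simply extracts the coefficient of $\zeta^n$. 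The resulting class is linear in $h$ and $H$, as required.

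The main obstacle is the combinatorial bookkeeping in this last step. Expanding $c_{n-1}(L_a\oplus L_b - q_2^*T_{\PP^n})$ produces a double sum indexed by how the total degree $n-1$ is split between $L_a$, $L_b$, and the alternating expansion of $c(q_2^*T_{\PP^n})^{-1}$; multiplying by $(h+a\zeta)(H+b\zeta)$ then introduces four terms, and extracting the $\zeta^n$-coefficient from each separates the output into contributions to $h$ and to $H$. The key identity to verify is that the $h$-coefficient can be regrouped as $ab\cdot B + b\cdot A$ and the $H$-coefficient as $ab\cdot C + a\cdot A$, with $A,B,C$ as in the statement. This matching is routine once one writes each of $A,B,C$ in the closed form $\sum_{k=0}^{m}a^{m-k}b^{k}=(a^{m+1}-b^{m+1})/(a-b)$ (respectively, its derivatives in $a$ or $b$), and then compares to the binomial sums produced by the Chern class expansion.
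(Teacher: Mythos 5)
Your overall skeleton (incidence variety $I$, birationality of $q_1|_I$ onto $\PP(V_{a,b})_{\rm sing}$, realizing $I$ as a degeneracy locus of the vertical derivative, pushing forward by extracting the $\zeta^n$-coefficient) is the right one, and it is in substance the method the paper delegates to \cite{Dil18}. However, the Thom--Porteous step is applied incorrectly, and the error propagates into the final coefficients. For a map $\Psi\colon E\to F$ with $\operatorname{rk}E=n$ (here $E=q_2^*T_{\PP^n}$) and $\operatorname{rk}F=2$ (here $F=L_a\oplus L_b$), the rank-$\le 1$ locus has class given by the $(n-1)\times(n-1)$ Schur determinant $\det\bigl(c_{1+j-i}(F-E)\bigr)$, \emph{not} by the single Chern class $c_{n-1}(F-E)$; the latter is the correct $1\times 1$ expression only after dualizing, i.e.
\[ [I]=c_1(L_a)\,c_1(L_b)\cdot c_{n-1}\bigl(q_2^*\Omega_{\PP^n}-L_a^{\vee}-L_b^{\vee}\bigr), \]
which agrees with $c_{n-1}(F-E)$ only when $n=2$. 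Concretely, for $n=3$ your formula yields the $h$-coefficient
\[ 2ab^{2}-8ab-4b^{2}+10b, \]
whereas the statement (and the corrected class) give
\[ 3a^{2}b+2ab^{2}+b^{3}-8ab-4b^{2}+6b. \]

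With the corrected class the rest of your plan goes through cleanly and the "combinatorial bookkeeping" is genuinely routine: writing $c(\Omega_{\PP^n})=(1-\zeta)^{n+1}$ and expanding $(1-h-a\zeta)^{-1}(1-H-b\zeta)^{-1}$ to first order in $h$ and $H$, the $h^0H^0$ part of $c_{n-1}$ is exactly $A$ (whence the terms $b\cdot A$ and $a\cdot A$ after multiplying by $(h+a\zeta)(H+b\zeta)$), while the $h$-linear part produces $(1-a\zeta)^{-2}$ and hence the shifted sum $B$ (and symmetrically $C$), giving precisely $(ab\cdot B+b\cdot A)h+(ab\cdot C+a\cdot A)H$. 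So the proposal as written does not prove the stated formula, but it is repaired by replacing $c_{n-1}(L_a\oplus L_b-q_2^*T_{\PP^n})$ with $c_{n-1}(q_2^*\Omega_{\PP^n}-L_a^{\vee}-L_b^{\vee})$ (equivalently, the correct Schur determinant), and you should also record that Thom--Porteous computes the class of the degeneracy locus only because $I$ has the expected codimension, which follows from the irreducibility statement in the proof of the quotient presentation of $\FF(a,b,n)$.
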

\begin{proof}
    The proof is the same as the one of \cite{Dil18}*{Prop. 1.2.6}, with the only difference that in our context $c_1=0$.
\end{proof}

\mypar Recall that $\FF(a,b,n)$ is the stack of complete intersections of two hypersurfaces of degree $a$ and $b$ inside a Brauer-Severi variety having $n$-dimensional fibres.

In \propref{prop:FF is a quotient} we established an isomorphism of this stack with the quotient stack $[(\PP(V_{a,b})\setminus Z)/\PGL_{n+1}]$, where $Z$ is a $\PGL_{n+1}$ invariant closed subscheme of $\PP(V_{a,b})$.

Let $\FF(a,b,n)_{\rm sing}$ be the closed substack of singular complete intersections: this is sent by the isomorphism of \propref{prop:FF is a quotient} to a restriction of the substack $[\PP(V_{a,b})_{\rm sing}/\PGL_{n+1}]$.

Therefore, the isomorphism of Picard groups
\[ \Pic(\FF(a,b,n)\simeq\Pic^{\PGL_{n+1}}(\PP(V_{a,b}) \]
sends $[\FF(a,b,n)_{\rm sing}]$ to the $\PGL_{n+1}$-equivariant class of $\PP(V_{a,b})_{\rm sing}$. From this we deduce the following result.

\begin{prop} \label{prop:Pic(FFsm)}
    Let $\FF(a,b,n)\sm$ be the stack of smooth complete intersections of bidegree $(a,b)$ in a Brauer-Severi variety having $n$-dimensional fibres.
    
    The $\Pic(\FF(a,b,n)\sm)$ is a quotient lattice of the free rank $2$ lattice $\Pic(\FF(a,b,n))$ (see \propref{prop:Pic FF}) by the rank $1$ lattice spanned by $[\PP(V_{a,b})_{\rm sing}]$.
\end{prop}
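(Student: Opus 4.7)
The plan is to deduce the statement from the standard localization (excision) exact sequence for the Picard group of a smooth algebraic stack. Since $\FF(a,b,n)$ is smooth by Proposition \ref{prop:FF is a quotient} (being the quotient of a smooth open subscheme of $\PP(V_{a,b})$ by $\PGL_{n+1}$), and since $\FF(a,b,n)_{\rm sing}$ is a closed substack of pure codimension one which is irreducible (the irreducibility of the singular locus was already established at the end of the proof of Proposition \ref{prop:FF is a quotient}, by the same Jacobian-criterion argument used for $\GG(d,m,n)$), excision yields the right-exact sequence
\[
\ZZ\cdot [\FF(a,b,n)_{\rm sing}]\longrightarrow \Pic(\FF(a,b,n))\longrightarrow \Pic(\FF(a,b,n)\sm)\longrightarrow 0.
\]

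Next, I would identify the image of $[\FF(a,b,n)_{\rm sing}]$ under the isomorphism $\Pic(\FF(a,b,n))\simeq \Pic^{\PGL_{n+1}}(\PP(V_{a,b}))$ of Proposition \ref{prop:FF is a quotient}. The $\PGL_{n+1}$-torsor $\PP(V_{a,b})\setminus Z\to \FF(a,b,n)$ pulls $\FF(a,b,n)_{\rm sing}$ back to the restriction of $\PP(V_{a,b})_{\rm sing}$ to $\PP(V_{a,b})\setminus Z$, and since $Z$ has codimension at least two in $\PP(V_{a,b})$ (Proposition \ref{prop:FF is a quotient}), the Picard group is unaffected by removing it. Hence the class of $\FF(a,b,n)_{\rm sing}$ corresponds to the $\PGL_{n+1}$-equivariant class of $[\PP(V_{a,b})_{\rm sing}]$, whose non-equivariant coefficients in the basis $\{[\pi^*\Ocal_{\PP(W_a)}(1)],[\Ocal_{\PP(V_{a,b})}(1)]\}$ were explicitly determined in Proposition \ref{prop:P(Vab)sing}.

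Finally, I would verify that the leftmost arrow is injective, so that the image really is a rank-one sublattice. Since $\Pic(\PP(V_{a,b}))\simeq \ZZ^{\oplus 2}$ is torsion-free and $\PP(V_{a,b})_{\rm sing}$ is a non-trivial effective divisor, the underlying class $[\PP(V_{a,b})_{\rm sing}]$ is non-zero, hence non-torsion. Injectivity then transfers to $\Pic^{\PGL_{n+1}}(\PP(V_{a,b}))$, which by the proof of Proposition \ref{prop:Pic FF} embeds into $\Pic(\PP(V_{a,b}))$ (the key input being that $\PGL_{n+1}$ has no non-trivial characters). Combining the three steps gives exactly the claim that $\Pic(\FF(a,b,n)\sm)$ is the quotient of the rank-two lattice $\Pic(\FF(a,b,n))$ by the rank-one sublattice spanned by $[\PP(V_{a,b})_{\rm sing}]$. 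I do not expect a serious technical obstacle beyond the bookkeeping needed to keep track of the several Picard-theoretic identifications in play.
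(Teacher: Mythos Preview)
Your proposal is correct and follows essentially the same route as the paper: identify $[\FF(a,b,n)_{\rm sing}]$ with the $\PGL_{n+1}$-equivariant class of $\PP(V_{a,b})_{\rm sing}$ via \propref{prop:FF is a quotient}, then apply the excision sequence. The paper leaves the injectivity of the left arrow implicit, whereas you spell it out; otherwise the arguments coincide.
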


\mypar Let $\Ucal_6$ be the stack of polarized K3 surfaces with at most rational double points of degree $6$ whose polarization induces a birational morphism.

As in the previous cases, the class of the Noether-Lefschetz divisor $\Dcal_{0,0}|_{\Ucal_6}$ is sent by the isomorphism of Picard groups
\[ \Pic(\Ucal_6)\simeq\Pic(\FF(2,3,4)) \]
induced by \propref{prop:U6 iso} to the cycle class of $\FF(2,3,4)_{\rm sing}$. Applying \propref{prop:Pic(FFsm)} we obtain the following result.

\begin{prop}\label{prop:D006}
    $[\Dcal_{0,0}|_{\Ucal_6}]=78\cdot [\Dcal_{3,1}|_{\Ucal_6}] + 98\cdot \lambda_1|_{\Ucal_{6}}$.
\end{prop}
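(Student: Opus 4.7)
The plan is to translate the computation, via the isomorphism $\Ucal_6 \simeq \FF(2,3,4)\rat$ of \propref{prop:U6 iso}, into a question about the equivariant Picard group of $\PP(V_{2,3})$. Under this isomorphism, the Noether--Lefschetz divisor $\Dcal_{0,0}|_{\Ucal_6}$ should correspond to the restriction of $\FF(2,3,4)_{\rm sing}$: a K3 surface $X$ carrying a $(-2)$-curve $C$ orthogonal to the quasi-polarization has $C$ contracted by the birational morphism $\psi_\sigma$ to a rational double point of the image complete intersection (see \parref{par:def quasi-pol}), and conversely every ADE singularity of a complete intersection $\overline{X}\subset\PP^{4}$ of bidegree $(2,3)$ lifts on the minimal resolution to a chain of $(-2)$-curves each orthogonal to the quasi-polarization. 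This identifies $[\Dcal_{0,0}|_{\Ucal_6}]$ with the class $[\FF(2,3,4)_{\rm sing}]\in\Pic(\FF(2,3,4))$.

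Next I would evaluate the class of $\PP(V_{2,3})_{\rm sing}$ inside $\Pic(\PP(V_{2,3}))\simeq\ZZ^{\oplus 2}$ by applying \propref{prop:P(Vab)sing} with $(a,b,n)=(2,3,4)$. A direct calculation of the three sums $A$, $B$, $C$ specified there produces explicit integer coefficients, and these combine to express $[\PP(V_{2,3})_{\rm sing}]$ as a $\ZZ$-linear combination of the hyperplane classes $[\pi^*\Ocal_{\PP(W_2)}(1)]$ and $[\Ocal_{\PP(V_{2,3})}(1)]$.

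Finally I would rewrite this class in the basis $\{[\Dcal_{3,1}|_{\Ucal_6}],\lambda_1|_{\Ucal_6}\}$ of $\Pic(\Ucal_6)$. From the proof of \thmref{thm:Pic(K6)}, the class $[\Dcal_{3,1}|_{\Ucal_6}]$ is sent under the isomorphism $\Pic(\Ucal_6)\simeq\Pic^{\PGL_5}(\PP(V_{2,3}))$ to the preimage $\pi^{-1}(\Delta)$ of the degree-$5$ discriminant of singular quadrics in $\PP(W_2)$, i.e.\ to the equivariant generator $[\pi^{*}\Ocal_{\PP(W_2)}(5)]$; and from \corref{cor:Pic FF 2} the Hodge line bundle $\lambda_1|_{\Ucal_6}$ is sent to $[\Ocal_{\PP(V_{2,3})}(1)\otimes\pi^{*}\Ocal_{\PP(W_2)}(1)]$. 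Both generators live inside $\Pic^{\PGL_5}(\PP(V_{2,3}))$ by \propref{prop:Pic FF}, and a change-of-basis computation on the expression obtained in the previous paragraph will yield the claimed identity.

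The main obstacle I expect is the geometric identification step: one must verify that $\Dcal_{0,0}|_{\Ucal_6}$ and the restriction of $\FF(2,3,4)_{\rm sing}$ coincide as \emph{divisors} (with multiplicities, not only set-theoretically), which requires checking that an ADE-singular complete intersection never forces more than one orthogonal $(-2)$-class on the resolution in a way that would introduce extra multiplicity. Everything else reduces to applying the formula of \propref{prop:P(Vab)sing} and performing the linear-algebra manipulation in a rank-$2$ lattice.
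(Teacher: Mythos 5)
Your proposal follows exactly the paper's argument: the paper identifies $\Dcal_{0,0}|_{\Ucal_6}$ with $\FF(2,3,4)_{\rm sing}$ via \propref{prop:U6 iso} and then invokes \propref{prop:Pic(FFsm)}, i.e.\ the class formula of \propref{prop:P(Vab)sing} with $(a,b,n)=(2,3,4)$, which produces the coefficients $78$ and $98$. The multiplicity issue you flag in the identification of divisors is not addressed in the paper either, which simply asserts the correspondence.
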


\subsection{Computation of $[(\Dcal_{3,1})|_{\Ucal_4}]$}

\mypar Let $\Ucal_4$ be the stack of polarized K3 surfaces with at most rational double points of degree $4$ whose polarization induces a birational morphism.

Consider the Noether-Lefschetz divisor $\Dcal_{3,1}$, which by definition consists of those polarized surfaces that contain a genus $1$ curve $C$ whose degree, with respect to the polarization, is equal to $3$.

\propref{prop:U4 iso} together with \propref{prop:Pic GG} gives an isomorphism:
\[ \Pic(\Ucal_4)\simeq\Pic^{\PGL_4}(\PP(W_4)) \]
This isomorphism sends $[\Dcal_{3,1}|_{\Ucal_4}]$ to the $\PGL_4$-equivariant cycle class of $D_{3,1}$, the divisor of degree $4$ hypersurfaces in $\PP^3$ containing a genus $1$ curve of degree $3$.

\begin{prop}\label{prop:D31 bis}
    $[D_{3,1}]=[\Ocal_{\PP(W_4)}(320)]$ in $\Pic(\PP(W_4))$.
\end{prop}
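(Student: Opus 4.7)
The plan is to identify $D_{3,1}$ with the divisor $D_\ell \subset \PP(W_4)$ of quartic surfaces containing a line, and then compute $[D_\ell]$ via a Chern-class calculation on $\Gr(2, V) \times \PP(W_4)$, where $V$ denotes the standard $\GL_4$-representation.

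First I would establish the identification $D_{3,1} = D_\ell$. A quasi-polarized K3 surface $(X, \sigma) \in \Ucal_4$ lies in $\Dcal_{3,1}$ precisely when $\Pic(X)$ admits a class $\beta$ with $\beta^2 = 0$ and $\beta \cdot \sigma = 3$. The complementary class $\sigma - \beta$ then satisfies $(\sigma - \beta)^2 = -2$ and $(\sigma - \beta) \cdot \sigma = 1$. By Riemann-Roch on a K3 surface, either $\sigma - \beta$ or its opposite is effective, and since $(\beta - \sigma) \cdot \sigma < 0$ rules out the latter, the class $\sigma - \beta$ is effective, with a unique irreducible component of positive $\sigma$-degree — a smooth rational $(-2)$-curve that maps isomorphically to a line on the quartic model $\overline X \subset \PP^3$. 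Conversely, a line $\ell \subset \overline X$ produces the class $\sigma - [\ell]$ with the required invariants, realized geometrically by the plane cubic residual to $\ell$ in $\overline X \cap \Pi$ for any plane $\Pi \supset \ell$.

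Next I would carry out the Chern-class computation. Let $\Scal$ be the tautological rank-$2$ subbundle on $\Gr(2, V)$, and consider the incidence correspondence $I = \{(X, \ell) : \ell \subset X\} \subset \PP(W_4) \times \Gr(2, V)$. Restriction of a quartic to a line gives a bundle map $W_4 \otimes \Ocal \to \Sym^4 \Scal^\vee$; composing with the tautological inclusion $\Ocal_{\PP(W_4)}(-1) \hookrightarrow W_4 \otimes \Ocal$ yields a section of the rank-$5$ bundle $\Ocal(1) \boxtimes \Sym^4 \Scal^\vee$ whose zero locus is $I$. Hence $[I] = c_5(\Ocal(1) \boxtimes \Sym^4 \Scal^\vee)$. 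Using the formula $c_5(L \otimes F) = \sum_{i=0}^{5} c_i(F) c_1(L)^{5-i}$ and pushing forward along the projection to $\PP(W_4)$, only the term involving $c_4(\Sym^4 \Scal^\vee)$ (the top-degree class on $\Gr(2, V)$) survives, yielding
\[
[D_\ell] = \Bigl( \int_{\Gr(2, V)} c_4(\Sym^4 \Scal^\vee) \Bigr) \cdot [\Ocal_{\PP(W_4)}(1)].
\]
A direct computation via Newton's identities expresses $c_4(\Sym^4 \Scal^\vee) = 24 c_1^4 + 208 c_1^2 c_2 + 64 c_2^2$ in terms of $c_i = c_i(\Scal^\vee)$, and the standard Schubert-calculus identities $\int c_1^4 = 2$, $\int c_1^2 c_2 = \int c_2^2 = 1$ on $\Gr(2, V)$ evaluate the integral to $48 + 208 + 64 = 320$.

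The main subtlety is to check that the projection $\pi_1 : I \to D_\ell$ is generically of degree $1$, so that $[D_\ell] = (\pi_1)_* [I]$ without correction. A dimension count settles this: pairs of skew lines form an $8$-dimensional family in $\Gr(2, V) \times \Gr(2, V)$ and containment in a quartic is a codimension-$10$ condition, so the locus of quartics containing at least two lines has dimension at most $32$, strictly less than $\dim D_\ell = 33$.
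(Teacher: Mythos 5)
Your proposal is correct and follows the same overall strategy as the paper (identify $D_{3,1}$ with the locus of quartics containing a line, realize the incidence correspondence as a locus of expected codimension $5$, push forward its class, and evaluate by Schubert calculus), but the execution of the two key steps is genuinely different. For the identification $D_{3,1}=D_\ell$ you argue lattice-theoretically on the K3 side ($\sigma-\beta$ is an effective $(-2)$-class of degree $1$), whereas the paper argues with residual curves of hyperplane sections; both work. More substantially, for the central computation the paper takes the incidence variety to be the projective subbundle $\PP(\Fcal)\subset \PP(W_4)\times\Gr_1(\PP^3)$ and computes the Chern classes of the rank-$5$ quotient $\Qcal=\pr_{2*}(\Ocal_{\PP(\Tcal)}\otimes\pr_1^*\Ocal_{\PP^3}(4))$ by a Grothendieck--Riemann--Roch calculation; you instead identify the obstruction bundle directly as $\Sym^4\Scal^\vee$ and compute $c_4$ by the splitting principle. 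This is cleaner and less error-prone, and it is consistent with the paper: your polynomial $24c_1^4+208c_1^2c_2+64c_2^2$ and the paper's $5(143s_1^4-264s_1^2s_2+42s_2^2)$ are different polynomial representatives (in the sub- versus quotient-bundle Chern classes) of the same degree-$4$ class, and both integrate to $320$. One point needs a sentence more: your dimension count for generic injectivity of $I\to D_\ell$ only treats pairs of \emph{skew} lines, so as stated it does not yet bound the locus of quartics containing two \emph{incident} lines. This is easily repaired: pairs of coplanar lines form a $7$-dimensional family, and containing such a pair is a codimension-$9$ condition on quartics (from the Koszul resolution of the ideal of a plane conic, $h^0(\Ical_{\ell_1\cup\ell_2}(4))=25$), so that stratum also has dimension $32<33$. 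The paper avoids this case analysis by showing instead that the generic quartic containing a line has Picard lattice $\ZZ\cdot[F]\oplus\ZZ\cdot[L]$ and hence contains exactly one line; either route is fine.
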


From this we deduce the following:

\begin{prop}\label{prop:D31}
    $[\Dcal_{3,1}]=320\cdot\lambda_1|_{\Ucal_4}$ in $\Pic(\Ucal_4)$.
\end{prop}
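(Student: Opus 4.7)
The plan is to deduce this directly from \propref{prop:D31 bis} by transporting the equality along the chain of isomorphisms already established in the paper. First I would invoke \propref{prop:U4 iso} together with \corref{cor:U4 is a quot} to get
\[ \Pic(\Ucal_4)\simeq\Pic(\GG(4,1,3)\rat)\simeq\Pic(\GG(4,1,3)), \]
the last isomorphism because the complement of $\GG(4,1,3)\rat$ has codimension $>1$. Then \propref{prop:GG is a quotient} identifies this with $\Pic^{\PGL_4}(\PP(W_4))$, which by \propref{prop:Pic GG} embeds into the honest lattice $\Pic(\PP(W_4))\simeq\ZZ$.

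Next I would check that, under this chain, the restricted Noether-Lefschetz divisor $\Dcal_{3,1}|_{\Ucal_4}$ corresponds to the $\PGL_4$-equivariant class of the divisor $D_{3,1}\subset\PP(W_4)$ of quartic surfaces in $\PP^3$ carrying a curve of genus $1$ and degree $3$. This is because the isomorphism $\Ucal_4\simeq\GG(4,1,3)\rat$ sends a K3 surface $X$ with a distinguished genus-one degree-three curve to its projective model in $\PP^3$ together with the image of that curve, which is again a genus-one degree-three curve lying on the resulting quartic.

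Finally, I would identify $\lambda_1|_{\Ucal_4}$ on the equivariant side. The computation in \corref{cor:Pic GG 2} applied with $n=3$, $m=1$, $d=4$ (for which $k=p=1$) shows that the pullback of $\lambda_1|_{\Ucal_4}$ along the $\PGL_4$-torsor $\PP(W_4)\setminus Z\to\GG(4,1,3)$ is $\Ocal_{\PP(W_4)}(1)$ with a canonical linearization. Combining this with \propref{prop:D31 bis}, which gives $[D_{3,1}]=320\cdot[\Ocal_{\PP(W_4)}(1)]$ in $\Pic(\PP(W_4))$, yields the claim. The argument is thus a bookkeeping exercise once \propref{prop:D31 bis} is in hand; the only delicate point is verifying that $\lambda_1|_{\Ucal_4}$ matches $\Ocal_{\PP(W_4)}(1)$ rather than its inverse, so that the factor $320$ appears with the correct sign.
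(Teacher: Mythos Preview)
Your proposal is correct and follows essentially the same approach as the paper: transport the equality of \propref{prop:D31 bis} through the isomorphism $\Pic(\Ucal_4)\simeq\Pic^{\PGL_4}(\PP(W_4))$, identifying $\Dcal_{3,1}|_{\Ucal_4}$ with the equivariant class of $D_{3,1}$ and $\lambda_1|_{\Ucal_4}$ with $\Ocal_{\PP(W_4)}(1)$ via \corref{cor:Pic GG 2}. You spell out the chain of isomorphisms and the sign check in more detail than the paper does, but the argument is the same.
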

\begin{proof}
Recall from \propref{prop:Pic GG} that $\Pic^{\PGL_4}(\PP(W_4))$ is generated by the line bundle $\Ocal_{\PP(W_4)}(1)$, which descends to the class of the Hodge line bundle $\lambda_1|_{\Ucal_4}$.

The class of $[(\Dcal_{3,1})|_{\Ucal_4}]$ is sent by the isomorphism of Picard groups 
\[ \Pic(\Ucal_4)\simeq \Pic^{\PGL_4}(\PP(W_4)) \]
to the equivariant cycle class of $D_{3,1}$, which we computed in \propref{prop:D31 bis}.
\end{proof}

The remainder of this subsection is devoted to prove \propref{prop:D31 bis}. We start with a technical lemma. In what follows, we will denote $CH(X)$ the Chow group/ring of a variety $X$.

\begin{lm}\label{lm:class proj sub}
     Let $0\to F\to E\to Q\to 0$ be a short exact sequence of vector bundles or rank $f$, $e$ and $q$ over a smooth variety $X$. Then we have:
     \[ [\PP(F)]=\sum_{i=0}^{q} c_i(Q)\cdot t^{q-i} \]
     inside $CH(\PP(E))$, where $t$ denotes the hyperplane class of $\PP(E)$.
\end{lm}
\begin{proof}
    Let $p:\PP(E)\to B$ be the projection morphism. Then the fundamental class of $\PP(F)$ is given by the top Chern class of the normal bundle, which is equal to $p^*Q\otimes\Ocal_{\PP(E)}(1)$.
    
    A simple computation with the Chern roots of the normal bundle prove the formula above.
\end{proof}
The first step towards a proof of \propref{prop:D31 bis} is the following.

\begin{lm}
     The divisor $D_{3,1}$ in $\PP(W_4)$ is equal to the divisor of quartic surfaces containing a line.
\end{lm}
\begin{proof}
    Let $X$ be hypersurface in $D_{3,1}$. Call $C$ the genus $1$ curve in $X$: then $C$ must be contained in a hyperplane $H\subset\PP^2$. We can assume that the intersection of $H$ with $X$ is a curve of degree $4$ that must also contain $C$: from this it follows that there is a line $L$ inside $X$.
    
    On the other hand, if $X$ contains a line $L$, we can always find a hyperplane $H$ containing $L$ and not contained in $X$: the intersection $H\cap X$ must then be equal to the union of $L$ with another curve $C$ of genus $1$ and degree $3$.
\end{proof}
Before proceeding further, we need some preliminary results on the geometry of quartic hypersurfaces containing a line.
 
\begin{lm}\label{lm:unique line}
     The generic degree $4$ hypersurface in $\PP^3$ that contains a line actually contains only one line.
\end{lm}
\begin{proof}
    The generic degree $4$ surface $X$ in $\PP^3$ containing a line $F$ has $\Pic(X)\simeq \ZZ\cdot[F]\oplus\ZZ\cdot [L]$, where $L$ is the polarization (we can assume it to be very ample). Recall that $(L^2)=4$, $(F\cdot L)=1$ and $(F^2)=-2$.
    
    Therefore, any other line $F'$ contained in $X$ must be linearly equivalent to a divisor of the form $a\cdot L+b\cdot F$ for some integers $a$, $b$. A straightforward computation shows that we must necessarily have $a=0$, $b=1$. In other terms $F'$ must be linearly equivalent to $F$. 
    
    We can apply Riemann-Roch to compute $h^0(\Ocal_X(F))$: this number turns out to be $1$, hence $F'$ is actually equal to $F$.
\end{proof}
    
\mypar Let $\Gr_1(\PP^3)$ be the grassmannian of projective lines in $\PP^3$ and call $\Tcal$ the associated rank $2$ tautological bundle. Let $\Jcal$ be the ideal sheaf of the closed embedding $\PP(\Tcal)\hookrightarrow \PP^3\times\Gr_1(\PP^3)$, so that we have the following exact sequence on $\PP^3\times\Gr_1(\PP^3)$:
    \[ 0\to\Jcal\to\Ocal_{\PP^3\times\Gr_1(\PP^3)}\to \Ocal_{\PP(\Tcal)}\to 0\]
    We twist the sequence above by $\pr_1^*\Ocal_{\PP^3}(4)$ and we take the pushforward along $\pr_2$, obtaining in this way the following exact sequence on $\Gr_1(\PP^3)$:
    \[ 0\to \pr_{2*}(\Jcal\otimes\pr_1^*\Ocal_{\PP^3}(4))\to W_4\times\Ocal_{\Gr_1(\PP^3)}\to \pr_{2*}(\Ocal_{\PP(\Tcal)}\otimes\pr_1^*\Ocal_{\PP^3}(4))\]
    We claim that the morphism on the right is surjective, i.e. $R^1\pr_{2*}(\Jcal\otimes\pr_1^*\Ocal_{\PP^3}(4))=0$. 
    
    This vanishing can be deduced applying cohomology and base change theorem (\cite{Hart}*{3.12.11}), once we prove that $H^1(\PP^3,J(4))=0$, where $J$ is the ideal of a line in $\PP^3$.
    
    Consider the Koszul resolution of $J$:
    \[0\to\Ocal_{\PP^3}(-2)\to\Ocal_{\PP^3}(-1)^{\oplus 2}\to J\to 0\]
    By twisting with $\Ocal_{\PP^3}(4)$ and taking the associated long exact sequence in cohomology, one easily verifies that $H^1(\PP^3,J(4))=0$.
    
    Let $\Fcal:=\pr_{2*}(\Jcal\otimes\pr_1^*\Ocal_{\PP^3}(4))$: then the short exact sequence above defines a closed embedding of $\PP(\Fcal)$ inside $\PP(W_4)\times\Gr_{1}(\PP^3)$. Moreover, the projection $\PP(\Fcal)\to\PP(W_4)$ is generically 1:1 onto $D_{3,1}$: this is because $\PP(\Fcal)$ parametrizes those pairs $(X,F)$ where $F$ is a line in $\PP^3$, $X$ is a hypersurface of degree $4$ and $F\subset X$, and by \lmref{lm:unique line} the generic hypersurface that satisfies this property contains only one line.
    
    Let $p:\PP(W_4)\times\Gr_1(\PP^3)\to\PP(W_4)$ be the projection onto the first factor: then $p_*[\PP(\Fcal)]=[D_{3,1}]$, where $[\PP(\Fcal)]$ denotes the fundamental class of $\PP(\Fcal)$ in the Chow ring $CH(\PP(W_4)\times\Gr_1(\PP^3))$.
    
\mypar The projective bundle formula tells us that \[CH(\PP(W_4)\times\Gr_1(\PP^3))\simeq CH(\Gr_1(\PP^3))[h]/(f(h))\]
    where $f$ is a relation in degree equal to the rank of $W_4$, which is $35$.
    
    Observe that the rank of $\Qcal:=\pr_{2*}(\Ocal_{\PP(\Tcal)}\otimes\pr_1^*\Ocal_{\PP^3}(4))$ is constantly equal to $5$, hence by Grauert theorem this is a locally free sheaf. Applying \lmref{lm:class proj sub} we have that 
    \[ [\PP(\Fcal)]=\sum_{i=0}^5 c_i(\Qcal)\cdot h^{5-i} \]
    hence
    \[ p_*[\PP(\Fcal)]=\sum_{i=0}^5 (p_*c_i(\Qcal))\cdot h^{5-i} \]
    The only non-zero term in the formula above is $p_*(c_4(\Qcal))\cdot h$ for fairly simple dimensional reasons, and
    \[ p_*(c_4(\Qcal))=\int_{\Gr_1(\PP^3)} c_4(\Qcal) \]
    so that we are left with computing the integral above.
    
\mypar The Chow rings of $\PP(W_4)$ and $\Gr_1(\PP^3)$ are both free as $\ZZ$-modules, and the Chow ring of $\PP(W_4)\times\Gr_1(\PP^3)$ is freely generated as $CH(\Gr_1(\PP^3))$-module. This implies that we can retrieve the Chern classes of $\Qcal$ from its Chern character $ch(\Qcal)$, using the relations:
   \[ \begin{aligned}
       c_1(\Qcal)  & = ch_1(\Qcal) \\
       c_2(\Qcal)  & = \frac{c_1(\Qcal)^2}{2}-ch_2(\Qcal) \\
       c_3(\Qcal)  & =2ch_3(\Qcal)-\frac{c_1(\Qcal)^3}{3}+c_1(\Qcal)c_2(\Qcal)\\
       c_4(\Qcal)  & =
       \frac{c_1(\Qcal)^4}{4}-c_1(\Qcal)^2c_2(\Qcal)+\frac{c_2(\Qcal)^2}{2}+c_1(\Qcal)c_3(\Qcal)-6ch_4(\Qcal)
    \end{aligned}\]
    We can use Grothendieck-Riemann-Roch formula to compute $ch(\Qcal)$.
    
    Let us recall where the locally free sheaf $\Qcal$ comes from: it is obtained as the pushforward along $\pr_2:\PP^3\times\Gr_1(\PP^3)$ of the sheaf $\Ocal_{\PP(\Tcal)}\otimes\pr_1^*\Ocal_{\PP^3}(4)$, where $\PP(\Tcal)$ is the projectivization of the tautological vector bundle over $\Gr_1(\PP^3)$.
    
    The cohomology groups $H^i(F,\Ocal_F(4))$ vanish for any line $F\subset\PP^3$ and $i>0$: using the cohomology and base change theorem we deduce that the higher direct images of $\Ocal_{\PP(\Tcal)}\otimes\pr_1^*\Ocal_{\PP^3}(4)$ along $\pr_2$ vanish as well, hence the following equality holds in $K_0(\Gr_1(\PP^3)$, the $K_0$-group of vector bundles on the grassmannian:
    \[  \pr_{2*}[\Ocal_{\PP(\Tcal)}\otimes\pr_1^*\Ocal_{\PP^3}(4)]=[\pr_{2*}(\Ocal_{\PP(\Tcal)}\otimes\pr_1^*\Ocal_{\PP^3}(4))] \]
    Therefore $ch(\Qcal)$ is equal to the Chern character of the term on the left, and by Grothendieck-Riemann-Roch we have:
    \[ ch(\pr_{2*}[\Ocal_{\PP(\Tcal)}\otimes\pr_1^*\Ocal_{\PP^3}(4)])= \pr_{2*}(ch([\Ocal_{\PP(\Tcal)}\otimes\pr_1^*\Ocal_{\PP^3}(4)])\cdot \pr_1^*Td(T_{\PP^3}))\]
    Let $t$ denote the hyperplane class of $\PP^3$ (and its pullback to $\PP^3\times\Gr_1(\PP^3)$), so that we have:
    \[ \pr_1^*Td(T_{\PP^3})=\left(\frac{t}{1-e^{-t}}\right)^4\]
    The multiplicativity of the Chern character holds:
    \[ ch([\Ocal_{\PP(\Tcal)}\otimes\pr_1^*\Ocal_{\PP^3}(4)])=ch(\Ocal_{\PP(\Tcal))}\cdot \pr_1^*ch(\Ocal_{\PP^3(4)}=ch(\Ocal_{\PP(\Tcal)})\cdot e^{4t}\]
    We apply again Grothendieck-Riemann-Roch to compute $ch(\Ocal_{\PP(\Tcal)})$, as the higher direct images of the embedding $i:\PP(\Tcal)\hookrightarrow \PP^3\times\Gr_1(\PP^3)$ vanish. We get:
    \[ ch(\Ocal_{\PP(\Tcal)})=i_*(Td([T_{\PP(\Tcal)}]-i^*[T_{\PP^3\times\Gr_1(\PP^3)}])) \]
    The short exact sequence for a regular embedding implies that the difference $[T_{\PP(\Tcal)}]-i^*[T_{\PP^3\times\Gr_1(\PP^3)}]$ is equal to $-[N_{\PP(\Tcal)}]$, where the latter is the class of the normal bundle.
    
    The normal bundle of $\PP(\Tcal)$ is equal to $i^*(\pr_2^*\Scal\otimes\pr_1^*\Ocal_{\PP^3}(1))$, where $\Scal$ is the tautological quotient bundle over $\Gr_1(\PP^2)$. We deduce:
    \[ ch(\Ocal_{\PP(\Tcal)})=Td([\pr_2^*\Scal\otimes\pr_1^*\Ocal_{\PP^3}(1)])^{-1}\cdot [\PP(\Tcal)] \]
    Let $\alpha$ and $\beta$ be the Chern roots of $\Scal$, and let $s_1=\alpha+\beta$ and $s_2=\alpha\beta$ be respectively the first and the second Chern class of $\Scal$.  The Chern roots of $\pr_2^*\Scal\otimes\pr_1^*\Ocal_{\PP^3}(1)$ are then $\alpha+t$ and $\beta+t$.
    
    \lmref{lm:class proj sub} gives us:
    \[\PP(\Tcal)=\sum_{i=0}^2 s_i\cdot t^{2-i} \]
    and by definition of Todd class, we get:
    \[ Td([\pr_2^*\Scal\otimes\pr_1^*\Ocal_{\PP^3}(1)])^{-1}=\frac{(1-e^{-\alpha-t})(1-e^{-\beta-t})}{(\alpha+t)(\beta+t)}\]
    Putting all together, we get the following formula for $ch(\pr_{2*}[\Ocal_{\PP(\Tcal)}\otimes\pr_1^*\Ocal_{\PP^3}(4)])$:
    \[ \pr_{2*}\left(e^{4t}\left(\frac{(1-e^{-\alpha-t})(1-e^{-\beta-t})}{(\alpha+t)(\beta+t)}\right)\left(\sum_{i=0}^2 s_i\cdot t^{2-i}\right)\left(\frac{t}{1-e^{-t}}\right)^4\right)
    \]
    Taking the pushforward along $\pr_{2*}$ is equivalent to taking the coefficient in front of the term $t^3$. A straightforward computation gives the following result:
    \begin{align*}
       ch(\pr_{2*}[\Ocal_{\PP(\Tcal)}\otimes\pr_1^*\Ocal_{\PP^3}(4)])_0=& 5\\
       ch(\pr_{2*}[\Ocal_{\PP(\Tcal)}\otimes\pr_1^*\Ocal_{\PP^3}(4)])_1=& 10s_1 \\
       ch(\pr_{2*}[\Ocal_{\PP(\Tcal)}\otimes\pr_1^*\Ocal_{\PP^3}(4)])_2=& -5s_1^2+20s_2 \\
       ch(\pr_{2*}[\Ocal_{\PP(\Tcal)}\otimes\pr_1^*\Ocal_{\PP^3}(4)])_3=& \frac{5}{3}s_1^3-10s_1s_2\\
       ch(\pr_{2*}[\Ocal_{\PP(\Tcal)}\otimes\pr_1^*\Ocal_{\PP^3}(4)])_4=& \frac{5}{12}s_1^4+\frac{10}{3}s_1^2s_2-\frac{5}{3}s_2^2 
    \end{align*}
    We can then retrieve the Chern classes of $\Qcal=\pr_{2*}(\Ocal_{\PP(\Tcal)}\otimes\pr_1^*\Ocal_{\PP^3}(4))$ using the relations we mentioned before. We get:
    \begin{align*}
        c_1(\Qcal) &= 10s_1 \\
        c_2(\Qcal) &= 55s_1^2-20s_2 \\
        c_3(\Qcal) &= 220(s_1^3-s_1s_2) \\
        c_4(\Qcal) &= 5(143s_1^4-264s_1^2s_2+42s_2^2
    \end{align*}
    The class of $[D_{3,1}]$ is then equal to $[\Ocal_{\PP(W_4)}(l)]$, where $l$ is value given by the integral
    \[ \int_{\Gr_{1}(\PP^3)}5(143s_1^4-264s_1^2s_2+42s_2^2) \]
    The intersection theory of the $4$-dimensional grassmannian $\Gr_1(\PP^3)$ is quite well known since Schubert himself. In particular, if $[p]$ is the fundamental class of a point in $\Gr_1(\PP^3)$, we have:
    \begin{align*}
        s_1^4= &2[p]\\
        s_1^2s_2=s_2^2= &[p]
    \end{align*}
    From this it follows that the integral above is equal to $320$, hence $[D_{3,1}]$ is isomorphic to $\Ocal_{\PP(W_4)}(320)$. This proves \propref{prop:D31 bis}.

\begin{bibdiv}
	\begin{biblist}
		\bib{AC}{article}{
			author={Arbarello, Enrico},
			author={Cornalba, Maurizio},
			title={The Picard groups of the moduli spaces of curves},
			journal={Topology},
			volume={26},
			date={1987},
			number={2},
			pages={153--171},
			issn={0040-9383},
			review={\MR{895568}},
			doi={10.1016/0040-9383(87)90056-5},
		}
        \bib{BLMM}{article}{
           author={Bergeron, Nicolas},
           author={Li, Zhiyuan},
           author={Millson, John},
           author={Moeglin, Colette},
           title={The Noether-Lefschetz conjecture and generalizations},
           journal={Invent. Math.},
           volume={208},
           date={2017},
           number={2},
           pages={501--552},
        }
        \bib{Bru}{article}{
           author={Bruinier, Jan Hendrik},
           title={On the rank of Picard groups of modular varieties attached to
           orthogonal groups},
           journal={Compositio Math.},
           volume={133},
           date={2002},
           number={1},
           pages={49--63},
        }
		\bib{Dil18}{article}{
		    AUTHOR = {Di Lorenzo, Andrea},
     TITLE = {Picard group of moduli of curves of low genus in positive
              characteristic},
   JOURNAL = {Manuscripta Math.},
    VOLUME = {165},
      YEAR = {2021},
    NUMBER = {3-4},
     PAGES = {339--361}
		}
  \bib{EG}{article}{
    author={Edidin, Dan},
   author={Graham, William},
   title={Equivariant intersection theory},
   journal={Invent. Math.},
   volume={131},
   date={1998},
   number={3},
   pages={595--634},
    }

    \bib{GLT}{article}{
           author={Greer, Francois},
           author={Li, Zhiyuan},
           author={Tian, Zhiyu},
           title={Picard groups on moduli of K3 surfaces with Mukai models},
           journal={Int. Math. Res. Not. IMRN},
           date={2015},
           number={16},
           pages={7238--7257},
        } 
   	\bib{HT}{article}{
            author={Harder, Andrew},
            author={Thompson, Alan},
            title={The geometry and moduli of K3 surfaces},
            conference={
                title={Calabi-Yau varieties: arithmetic, geometry and physics},
            },
            book={
            series={Fields Inst. Monogr.},
            volume={34},
            publisher={Fields Inst. Res. Math. Sci., Toronto, ON},
            },
            date={2015},
            pages={3--43},
   }
   	\bib{Har}{article}{
    		author={Harer, John},
    		title={The second homology group of the mapping class group of an
    			orientable surface},
    		journal={Invent. Math.},
    		volume={72},
    		date={1983},
    		number={2},
    		pages={221--239},
    	}
    	\bib{Hart}{book}{
           author={Hartshorne, Robin},
           title={Algebraic geometry},
           note={Graduate Texts in Mathematics, No. 52},
           publisher={Springer-Verlag, New York-Heidelberg},
           date={1977},
           pages={xvi+496},
           isbn={0-387-90244-9},
        }
        \bib{Huy}{book}{
        book={
        series={Cambridge Studies in Advanced Mathematics},
        publisher={Cambridge University Press}},
        title={Lectures on K3 Surfaces},
        author={Huybrechts, Daniel},
        year={2016},
}
        \bib{MP}{article}{
           author={Maulik, Davesh},
           author={Pandharipande, Rahul},
           title={Gromov-Witten theory and Noether-Lefschetz theory},
           conference={
              title={A celebration of algebraic geometry},
           },
           book={
          series={Clay Math. Proc.},
              volume={18},
              publisher={Amer. Math. Soc., Providence, RI},
           },
           date={2013},
           pages={469--507},
        }
		\bib{Mum63}{article}{
			author={Mumford, David},
			title={Picard groups of moduli problems},
			conference={
				title={Arithmetical Algebraic Geometry},
				address={Proc. Conf. Purdue Univ.},
				date={1963},
			},
			book={
				publisher={Harper \& Row, New York},
			},
			date={1965},
			pages={33--81},
		}
		\bib{LT}{article}{
		   author={Li, Zhiyuan},
		   author={Tian, Zhiyu}
		   title={Picard groups of moduli of K3 surfaces of low degree}
		   journal={arXiv:1304.3219}
		   }
		\bib{OG}{article}{
           author={O'Grady, Kieran G.},
           title={On the Picard group of the moduli space for $K$-$3$ surfaces},
           journal={Duke Math. J.},
           volume={53},
           date={1986},
           number={1},
           pages={117--124},
        }
	
        \bib{SD}{article}{
               author={Saint-Donat, B.},
               title={Projective models of $K-3$ surfaces},
               journal={Amer. J. Math.},
               volume={96},
               date={1974},
               pages={602--639},
            }

\end{biblist}
\end{bibdiv}
\end{document}